\newcommand*{\keywords}[1]{\par\addvspace\medskipamount{\rightskip=0pt plus1cm
\def\and{\ifhmode\unskip\nobreak\fi\ $\cdot$
}\noindent \textbf{Keywords} \enspace\ignorespaces#1\par}}
\title{On existence of measure with given marginals supported on a hyperplane\thanks{The article was prepared within the framework of the HSE University Basic Research Program.}}
\author{Alexander~P.~Zimin\thanks{National Research University Higher School of Economics, Russian Federation \& Center for Advanced Studies, Skoltech, Moscow, Russian Federation}}
\date{}
\begin{document}

\maketitle
\begin{abstract}
Let $\{\mu_k\}_{k = 1}^N$ be absolutely continuous probability measures on the real line such that every measure $\mu_k$ is supported on the segment $[l_k, r_k]$ and the density function of $\mu_k$ is nonincreasing on that segment for all $k$. We prove that if $\mathbb{E}(\mu_1) + \dots + \mathbb{E}(\mu_N) = C$ and if $r_k - l_k \le C - (l_1 + \dots + l_N)$ for all $k$, then there exists a transport plan with given marginals supported on the hyperplane $\{x_1 + \dots + x_N = C\}$. This transport plan is an optimal solution of the multimarginal Monge-Kantorovich problem for the repulsive harmonic cost function $\sum_{i, j = 1}^N-(x_i - x_j)^2$.
\end{abstract}

\keywords{Optimal transportation \and Monge-Kantorovich problem \and Multimarginal transportation problem \and Repulsive harmonic interaction}

\section{Introduction}
\subsection{Multi-marginal OT with repulsive harmonic cost}

Assume we are given $N$ Polish spaces $X_1$, $X_2$, \dots, $X_N$, equipped with probability measures $\mu_k$ on $X_k$ and a cost function $c: X_1 \times \dots \times X_N \to \mathbb{R}$. In the multimarginal Monge-Kantorovich problem we seek to minimize $$\int_{X_1 \times \dots \times X_N}
c(x_1, x_2, \dots, x_N)~\gamma(dx_1, dx_2, \dots, dx_N)$$
over the set $\Pi(\mu_1, \mu_2, \dots, \mu_N)$ of positive measures $\gamma$ on the product space
$X_1 \times \dots \times X_N$ whose marginals are the $\mu_k$. See \cite{Villani, bogachev} for an account of the optimal transportation problem with two marginals and \cite{brendan_pass}.

An interesting example of the optimal transportation problem was studied in connection with applications to the density functional theory, namely, the Hohenberg–Kohn theory. The Hohenberg–Kohn theory considers a model of $N$ electrons whose arrangement in the space $\R^{3N}$ is determined by the density $\rho_{N}(x_1, \dots, x_N)$. The energy of pairwise interaction of electrons is specified as the density integral over the Coulomb potential:
$$
\mathcal{V}_{ee} = \int\sum_{1 \le i < j \le N} \frac{\rho_N(x_1, \dots, x_N)}{|x_i-x_j|} \, dx_1 \cdots dx_N.
$$
Due to the symmetry,
$$
\mathcal{V}_{ee} = \int \sum_{1 \le i < j \le N} \frac{\rho_2(x, y)}{|x-y|} \, dxdy,
$$
where $\rho_2(x, y) = \int {\rho_N}(x, y, z_2,  \cdots, z_N) \, dz_2 \cdots dz_N$.

In the Hohenberg-Kohn theory, the ground state is described by a functional that depends only on the density of one electron
\[\rho(x) = N \int {\rho_N}(x, z_2, \cdots, z_N) \, dz_2 \cdots dz_N.\]
For this purpose $\mathcal{V}_{ee}(\rho_2)$ is approximated by the functional ${\mathcal V}_{ee}(\rho)$, depending only on $\rho$.
The correct approximation is the key problem in this theory.

It turns out that the natural approximation is the approximation by the functional
$$
{F}(\rho) = \inf_{\pi \in \Pi(\rho,\rho)} \int \frac{1}{|x-y|} \, \pi(dx, dy).
$$
For example, this functional occurs when the so-called "semi-classical limit" is taken. Trivially, the functional $F$ is the Kantorovich functional (for the pair of equal marginals) with the cost function  $\frac{1}{|x - y|}$. This cost function is called Coulomb cost function.

In \cite{Cotar} the passage to the limit is made rigorously, and some sufficient conditions for the existence and uniqueness of a solution for the Kantorovich functional are found. For a generalization to a wider class of "repulsive cost functions" see \cite{CoDPMa}. For further progress in physical applications, see \cite{BindPasc}. In \cite{ChafaiMaida} transport inequalities and concentration inequalities for the Coulomb cost function are obtained.

In this paper we consider $X_k = \R^d$ for all $k = 1, \dots, N$ and the repulsive cost function $c$ having the form 
\[
c(x_1, \dots, x_N) = -\sum_{i, j = 1}^N|x_i - x_j|^2.
\]
This function describes repulsive harmonic electron-electron interaction. From a technical point of view, harmonic cost function is an interesting toy model to approach the case of Coulomb cost. From applications, the model with repulsive harmonic interaction allows particles to overlap, which makes it difficult to apply this model in practice. In \cite{dim-ger-nen} the properties of multi-marginal OT problem with repulsive harmonic cost are obtained and examples of solutions are given. 

Suppose that all marginals $\mu_k$ have  finite second moments. First, we notice that minimizers of this problem are also minimizers of the problem with the cost $c(x_1, \dots, x_N) = |x_1 + \dots + x_N|^2$: indeed, we have 
\begin{align*}
\int\sum_{i, j = 1}^N-|x_i - x_j|^2\gamma(dx_1, \dots, dx_N) = 
2\int c\,d\gamma 
- 2N\sum_{k = 1}^N\int|x_k|^2\mu_k(dx_k)
\end{align*}
and the last part does not depend on $\gamma$. Hence, if all marginals $\mu_k$ have  finite second moments, then we get
\[
\argmin_{\gamma}\int\sum_{i, j = 1}^N-|x_i - x_j|^2\gamma(dx_1, \dots, dx_N) = \argmin_{\gamma}\int|x_1 + \dots + x_N|^2\gamma(dx_1, \dots, dx_N).
\]

Clearly, $\argmin |x_1 + \dots + x_N|^2$ is the hyperplane $x_1 + \dots + x_N = 0$. Thus, if $\gamma \in \Pi(\mu_1, \dots, \mu_N)$ and $\supp(\gamma) \subset \{x_1 + \dots + x_N = 0\}$, then the transport plan $\gamma$ is trivially optimal. The following statement generalizes this observation. 
\begin{lemma}[{{\cite[Lemma 4.3]{dim-ger-nen}}}]
Let $\{\mu_k\}_{k = 1}^N$ be probability measures on $\R^d$ and $h\colon \R^d \to \R$ be a strictly convex function and suppose $c \colon (\R^d)^N \to \R$ be a cost function of the form $c(x_1, \dots, x_N) = h(x_1 + \dots + x_N)$. Then if there exists a plan $\gamma \in \Pi(x_1, \dots, x_N)$ concentrated on the hyperplane of the form $x_1 + \dots + x_N = C$, this plan is optimal for the multi-marginal problem with cost $c$. Moreover, if such $\gamma$ exists, then $\supp(\widehat{\gamma}) \subset \{x_1 + \dots + x_N = C\}$ is a necessary and sufficient condition for $\widehat{\gamma}$ to be optimal. In this case we will say that $\gamma$ is a flat optimal plan and $\{\mu_k\}_{k = 1}^N$ is a flat $N$-tuple of measures.
\end{lemma}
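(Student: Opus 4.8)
The plan is to transport the whole problem forward along the summation map $S\colon(\mathbb{R}^d)^N\to\mathbb{R}^d$, $S(x_1,\dots,x_N)=x_1+\dots+x_N$. For $\gamma\in\Pi(\mu_1,\dots,\mu_N)$ set $\nu_\gamma:=S_\#\gamma$; then the cost of $\gamma$ is exactly $\int_{\mathbb{R}^d}h\,d\nu_\gamma$, so minimizing $c$ over transport plans amounts to minimizing $\nu\mapsto\int h\,d\nu$ over the family $\{\nu_\gamma:\gamma\in\Pi(\mu_1,\dots,\mu_N)\}$, and the only feature of that family I shall need is that every $\nu_\gamma$ has the same mean $m:=\sum_{k=1}^N\mathbb{E}(\mu_k)$, because $\int S\,d\gamma=\sum_k\int x_k\,\mu_k(dx_k)$ does not depend on $\gamma$. (Throughout I assume the $\mu_k$ have finite first moments, which is automatic in our application, where they are even compactly supported; the general case is routine and will not be needed.)

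First I would note that by Jensen's inequality, for every plan $\gamma$ one has $\int_{\mathbb{R}^d}h\,d\nu_\gamma\ge h(m)$, so $h(m)$ is a universal lower bound on the cost. If now some plan $\gamma$ is concentrated on $\{x_1+\dots+x_N=C\}$, then $\nu_\gamma=\delta_C$; comparing means forces $C=m$, and the cost of $\gamma$ equals $h(C)=h(m)$, meeting the lower bound. Hence $\gamma$ is optimal and the optimal value is $h(C)$; this proves the first assertion and, incidentally, shows that a flat plan can live only over the single hyperplane $\{S=m\}$.

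For the equality characterization I would use a supporting affine function. Since $h\colon\mathbb{R}^d\to\mathbb{R}$ is a finite convex function it is continuous and subdifferentiable everywhere; pick $p\in\partial h(m)$ and set $\ell(x):=h(m)+\langle p,x-m\rangle$, so that $h\ge\ell$ on $\mathbb{R}^d$ and, because $h$ is \emph{strictly} convex, $h(x)>\ell(x)$ for every $x\ne m$. Assume a flat plan exists, so the optimal value is $h(m)=h(C)$. If $\widehat\gamma$ is concentrated on $\{S=C\}$, the computation above shows it is optimal. Conversely, if $\widehat\gamma$ is optimal, then, using that $\ell$ is affine and $\nu_{\widehat\gamma}$ has mean $m$,
\[
\int_{\mathbb{R}^d}(h-\ell)\,d\nu_{\widehat\gamma}=\int_{\mathbb{R}^d}h\,d\nu_{\widehat\gamma}-\ell(m)=h(m)-h(m)=0,
\]
and since $h-\ell\ge0$ with strict inequality off the point $m$, this forces $\nu_{\widehat\gamma}=\delta_m$, i.e. $\widehat\gamma$ is concentrated on $\{x_1+\dots+x_N=m\}=\{x_1+\dots+x_N=C\}$ (the hyperplane being closed, this is the same as $\supp(\widehat\gamma)\subset\{x_1+\dots+x_N=C\}$).

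There is no serious obstacle here: the whole argument is just the equality case of Jensen's inequality for a strictly convex function, made quantitative through the supporting function $\ell$. The only point that requires a little care is the first-moment integrability — needed to speak of the mean $m$ and to get $\int\ell\,d\nu_\gamma=\ell(m)$ — which is immediate in the setting of this paper; in full generality one would either restrict to marginals with finite first moments or remove the issue by a truncation argument.
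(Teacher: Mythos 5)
The paper itself does not prove this lemma: it is quoted from \cite[Lemma 4.3]{dim-ger-nen}, so there is no in-paper proof to compare against. Your argument is correct and is the standard one for this statement (and essentially the one behind the cited result): push any plan forward under the sum map, note that the mean $m=\sum_k\mathbb{E}(\mu_k)$ of the pushforward does not depend on the coupling, get the universal lower bound $h(m)$ from Jensen, and settle the equality case via a supporting affine function plus strict convexity, which forces the pushforward to be $\delta_m$ and hence the plan to sit on the (closed) set $\{x_1+\dots+x_N=C\}$ with $C=m$. The one point worth flagging is that the lemma as stated carries no moment hypotheses, whereas your proof needs finite first moments of the $\mu_k$ to define $m$, to integrate the affine minorant, and indeed to make the Jensen step meaningful; you acknowledge this yourself, and it is harmless in this paper, where the lemma is only invoked for compactly supported marginals and the surrounding discussion already assumes finite second moments.
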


This reveals a very large class of minimizers in some cases.  However, not every $N$-tuple of measures is flat (see \cite[Remarks 4.4 and 4.5]{dim-ger-nen}). The next theorem provides an example of a flat $N$-tuple of measures.
\begin{theorem}[{{\cite[Theorem 4.6]{dim-ger-nen}}}]
For $k = 1, \dots, N$ let $\mu_k = \mu = \mathcal{L}^d|_{[0, 1]^d}$ be the uniform measure on $d$-dimensional cube $[0, 1]^d \subset \R^d$. Let $h \colon \R^d \to \R$ be a convex function and suppose that $c \colon ([0, 1]^d)^N \to \R$ is a cost function such that $c(x_1, \dots, c_N) = h(x_1 + \dots + x_N)$. Then, there exists a  transport map $T: [0, 1]^d \to [0, 1]^d$ such that $T_*(\mu) = \mu$, $T^N(x) = x$ and
\[
\min_{\gamma \in \Pi(\mu_1, \dots, \mu_N)}\int c(x_1, \dots, x_N)~\gamma(dx_1, \dots, dx_N) = \int c(x, T(x), \dots, T^{N - 1}(x))~\mu(dx).
\]
\end{theorem}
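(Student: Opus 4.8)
The plan is to reduce the statement to $d=1$, where I would exhibit an explicit measure-preserving bijection $T_0$ of $[0,1]$ with $T_0^{N}=\mathrm{id}$ all of whose orbits have barycenter $1/2$, then tensorize over the $d$ coordinates and deduce optimality of the resulting cyclic Monge plan from the fact that it is concentrated on a hyperplane $\{x_1+\dots+x_N=C\}$.

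First I would build $T_0$. Identify $[0,1)$, up to the Lebesgue-null set $D$ of $N$-adic rationals, with the product space $(\mathbb{Z}/N)^{\mathbb{N}}$ through the base-$N$ expansion $t=\sum_{i\ge 1}a_i(t)N^{-i}$; under this identification $\mathcal{L}^{1}|_{[0,1]}$ corresponds to the product of the uniform measures on $\mathbb{Z}/N$. Let $\pi$ be the cyclic permutation $j\mapsto j+1$ of $\mathbb{Z}/N$ and define
\[
T_0(t):=\sum_{i\ge 1}\pi\big(a_i(t)\big)\,N^{-i},
\]
i.e.\ ``add $1$ modulo $N$ to every base-$N$ digit of $t$, without carrying'' (and $T_0:=\mathrm{id}$ on $D\cup\{1\}$). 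Being a coordinatewise product of the measure-preserving bijection $\pi$, the map $T_0$ is a measure-preserving bijection of $[0,1]$ with $T_0^{N}=\mathrm{id}$, and — this is the point of the construction — since $\{(a_i(t)+k)\bmod N:0\le k\le N-1\}=\{0,1,\dots,N-1\}$ for every digit, one has for $t\notin D\cup\{1\}$
\[
\sum_{k=0}^{N-1}T_0^{k}(t)=\sum_{i\ge 1}\frac{1}{N^{i}}\sum_{k=0}^{N-1}\big((a_i(t)+k)\bmod N\big)=\frac{N(N-1)}{2}\sum_{i\ge 1}\frac{1}{N^{i}}=\frac{N}{2}.
\]

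Next I would set $T\colon[0,1]^{d}\to[0,1]^{d}$, $T(t_1,\dots,t_d):=\big(T_0(t_1),\dots,T_0(t_d)\big)$. Since $\mu=(\mathcal{L}^{1}|_{[0,1]})^{\otimes d}$, the map $T$ preserves $\mu$, satisfies $T^{N}=\mathrm{id}$, and $\sum_{k=0}^{N-1}T^{k}(x)=(N/2,\dots,N/2)=:C\in\mathbb{R}^{d}$ for $\mu$-a.e.\ $x$. Hence the Monge plan $\gamma:=(\mathrm{id},T,\dots,T^{N-1})_{*}\mu$ lies in $\Pi(\mu,\dots,\mu)$ (each $T^{k}$ pushes $\mu$ to $\mu$) and is concentrated on the hyperplane $\{x_1+\dots+x_N=C\}$. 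For strictly convex $h$ the optimality of $\gamma$ is then immediate from the lemma recalled above (\cite[Lemma~4.3]{dim-ger-nen}); for a general convex $h$ one argues directly: for any $\gamma'\in\Pi(\mu,\dots,\mu)$, convexity of $(x_1,\dots,x_N)\mapsto h(x_1+\dots+x_N)$ and Jensen's inequality give
\[
\int c\,d\gamma'=\int h(x_1+\dots+x_N)\,d\gamma'\ \ge\ h\Big(\int(x_1+\dots+x_N)\,d\gamma'\Big)=h\Big(\sum_{k=1}^{N}\int x_k\,d\mu\Big)=h(C),
\]
since the barycenter of $\mu$ is $(1/2,\dots,1/2)$; on the other hand $\int c\,d\gamma=h(C)$ because $c\equiv h(C)$ on $\supp\gamma$. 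Thus $\gamma$ is optimal, and the change-of-variables formula $\gamma=(\mathrm{id},T,\dots,T^{N-1})_{*}\mu$ yields $\min_{\gamma'}\int c\,d\gamma'=\int c(x,T(x),\dots,T^{N-1}(x))\,d\mu(x)$.

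The only real difficulty I foresee is the one-dimensional construction, and it is worth noting why it must be non-elementary for odd $N$: any piecewise-monotone measure-preserving bijection of $[0,1]$ has all branch slopes equal to $\pm1$, so the derivative of $t\mapsto\sum_{k=0}^{N-1}T_0^{k}(t)$ would lie in $\{1+\sum_{k=1}^{N-1}\varepsilon_k:\varepsilon_k\in\{\pm1\}\}$ and could never vanish when $N$ is odd. The digitwise map above is not piecewise monotone, which is precisely what lets the orbit-barycenter identity hold uniformly in $N$; the rest (measure-preservation of the digit shift, the tensorization, and the Jensen step) is routine.
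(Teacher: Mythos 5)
The paper does not prove this statement --- it is quoted from \cite[Theorem 4.6]{dim-ger-nen} --- and your argument is correct and essentially the same construction as in that reference: the base-$N$ digitwise shift on $[0,1]$, tensorized over the $d$ coordinates, produces exactly the cyclic map with $x + T(x) + \dots + T^{N-1}(x) = (N/2, \dots, N/2)$ that the present paper cites as a consequence of that proof, and optimality then follows from concentration on the hyperplane (your Jensen step handles general convex $h$). One small patch: as written $T_0$ is not literally a bijection with $T_0^N = \mathrm{id}$, since a point whose digit string is eventually constant (e.g.\ eventually $N-2$) is sent into the exceptional set $D$ on which you declared $T_0 = \mathrm{id}$; define the digit map only on the invariant, full-measure set of points whose base-$N$ expansion is not eventually constant and take the identity elsewhere, after which $T_*(\mu) = \mu$, $T^N = \mathrm{id}$, and the orbit-sum identity hold as needed.
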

It follows from the proof of this theorem that $T$ does not depend on $h$ and $x + T(x) + \dots + T^{N - 1}(x) = \left(\frac{N}{2}, \dots, \frac{N}{2}\right)$ for all $x \in [0, 1]^d$. Then if we denote by $F$ the mapping $x \mapsto (x, T(x), \dots, T^{N - 1}(x))$, the measure $\gamma = F_*([0, 1]^d)$ is a flat optimal plan. This shows that a tuple of $N$ uniform measures on $d$-dimensional cube $[0, 1]^d$ is flat. See also \cite[Examples 4.9 and 4.10]{dim-ger-nen} for more examples of flat $N$-tuples of measures.

In this paper we present a wide class of flat $N$-tuples of measures for the case $d = 1$. Suppose that each measure of the tuple is absolutely continuous, concentrated on the segment and the density function is nonincreasing on this segment. Denote by $\mathbb{E}(\mu) = \int x\,\mu(dx)$ the first moment of the measure $\mu$. The following theorem gives necessary and sufficient conditions when $N$-tuple of measures with that properties is flat.
\begin{theorem}\label{thm:main_theorem}
Let $\{\mu_k\}_{k = 1}^N$ be absolutely continuous probability measures on the real line. Suppose that $\supp(\mu_k) = [l_k, r_k]$ and the density function of $\mu_k$ is nonincreasing on the segment $[l_k, r_k]$ for all $k = 1, \dots, N$. Then the $N$-tuple $\{\mu_k\}_{k = 1}^N$ is flat if and only if $r_k - l_k \le \mathbb{E}(\mu_1) + \dots + \mathbb{E}(\mu_N) - (l_1 + \dots + l_N)$ for all $k = 1, \dots, N$.
\end{theorem}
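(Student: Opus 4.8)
The statement is an equivalence whose two implications are of quite different difficulty: the forward direction is a short compatibility argument, while the converse — the construction of a flat plan — is the substance.

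\emph{Necessity.} Suppose $\gamma\in\Pi(\mu_1,\dots,\mu_N)$ is concentrated on $\{x_1+\dots+x_N=C\}$. Integrating the identity $x_1+\dots+x_N\equiv C$ against $\gamma$ forces $C=\mathbb E(\mu_1)+\dots+\mathbb E(\mu_N)$. Fix $k$. For $\gamma$-a.e.\ point, $x_k=C-\sum_{j\ne k}x_j\le C-\sum_{j\ne k}l_j$ since $x_j\ge l_j$, hence $\mu_k\bigl((C-\sum_{j\ne k}l_j,\,+\infty)\bigr)=0$. On the other hand the density of $\mu_k$ is nonincreasing with $\supp(\mu_k)=[l_k,r_k]$, so it is strictly positive on $[l_k,r_k)$ and $\mu_k((a,r_k])>0$ for every $a<r_k$. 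Therefore $C-\sum_{j\ne k}l_j\ge r_k$, i.e.\ $r_k-l_k\le C-(l_1+\dots+l_N)$.

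\emph{Sufficiency: reduction and structural input.} Translating the $k$-th variable by $l_k$ (this shifts the hyperplane and does not affect monotonicity of densities), I may assume $l_k=0$; write $s_k=r_k$ and $D=\mathbb E(\mu_1)+\dots+\mathbb E(\mu_N)$, so the hypothesis reads $s_k\le D$ for all $k$ and the goal is a plan in $\Pi(\mu_1,\dots,\mu_N)$ concentrated on $\{x_1+\dots+x_N=D\}$. The structural fact I would exploit is that a probability measure on $[0,s]$ with nonincreasing density is exactly a mixture $\int_{(0,s]}\mathrm{Unif}[0,t]\,d\lambda(t)$; equivalently its quantile function $Q=F^{-1}\colon[0,1]\to[0,s]$ is convex and increasing with $Q(0)=0$, $Q(1)=s$, $\int_0^1Q=\mathbb E(\mu)$. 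Two immediate consequences: first, $\mathbb E(\mu)\le s/2$, so $s_k\le D\le\tfrac12\sum_j s_j$, which is the ``polygon inequality'' $s_k\le\sum_{j\ne k}s_j$; second, a reflection of such a measure has nondecreasing density, i.e.\ concave quantile function.

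\emph{Sufficiency: the construction.} I would argue by induction on $N$, relabelling at each step so that $\mu_N$ has the largest support. Setting $X_N:=D-(X_1+\dots+X_{N-1})$ makes $x_1+\dots+x_N=D$ automatic, and $X_N\sim\mu_N$ precisely when the coupling of the first $N-1$ marginals is chosen so that $X_1+\dots+X_{N-1}$ has law $(D-\cdot)_\ast\mu_N$ — a measure on $[D-s_N,D]$ with \emph{nondecreasing} density and the correct mean $\sum_{k<N}\mathbb E(\mu_k)$. Thus the theorem is subsumed by the more flexible claim that nonincreasing-density marginals $\mu_1,\dots,\mu_m$ can be coupled so that $X_1+\dots+X_m$ has a prescribed law $\sigma$, whenever $\sigma$ has nondecreasing density, the right mean, and small enough support. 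This I would prove by continuing to peel off marginals one at a time (in order of decreasing support), at each stage replacing the current target $\sigma$ by an intermediate target $\sigma'$ realizable from the smaller tuple: the laws of $X+Y$ over couplings of two fixed measures fill out the interval, in the convex order, between the countermonotone sum (the most concentrated) and the comonotone sum (the most dispersed, and still of nonincreasing density when the two summands are), so one must choose $\sigma'$ so that $\sigma$ stays inside the interval determined by $(\sigma',\mu_k)$, down to a two-marginal base case verified by hand. The flat plan is assembled by disintegrating the $(m-1)$-marginal plan over the realized partial sum and reattaching the last marginal. Besides the two-marginal base case (for $N\le2$ the inequality forces $\mu_1=\mu_2=\mathrm{Unif}[0,D]$ and $x_2=D-x_1$ works), one needs the uniform case with possibly unequal lengths $t_k$ obeying $t_k\le\sum_{j\ne k}t_j$: for equal lengths this is the quoted theorem on uniform marginals, and for unequal lengths I would give it directly — e.g.\ place consecutive arcs of lengths $t_1,\dots,t_N$ on a circle of circumference $\sum_k t_k$, let $X_k$ be the length of the $k$-th arc covered by a sub-arc of length $\tfrac12\sum_k t_k$ with uniformly random position, and remove the resulting boundary atoms by smoothing the window — the polygon inequality being exactly what lets each $X_k$ sweep all of $[0,t_k]$.

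\emph{The main obstacle.} The delicate point is the bookkeeping in the recursion: one has to show that the single quantitative hypothesis $s_k\le D$, fed through the convexity/concavity of quantile functions and the mixture representation, keeps every intermediate target within the achievable convex-order interval at each peeling step — in particular the comparison, in the convex order, of the reflected marginal $(D-\cdot)_\ast\mu_N$ against the comonotone sum of $\mu_1,\dots,\mu_{N-1}$ (and against the most concentrated achievable sum of these), which is where the exact value of $D$ should be decisive. I expect the explicit comonotone/countermonotone sum computations, the verification of this convex-order inequality, and the construction in the unequal-length uniform base case to carry the weight of the argument.
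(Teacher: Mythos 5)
Your necessity argument is correct and is essentially the paper's: integrating $x_1+\dots+x_N$ against a flat plan identifies $C$, and the support constraint on the slice of the hyperplane gives $r_k-l_k\le C-(l_1+\dots+l_N)$. For sufficiency, however, your proposal is an outline of a genuinely different strategy (peel off one marginal, prescribe the law of the partial sum $X_1+\dots+X_{N-1}$ to be $(D-\cdot)_*\mu_N$, recurse via convex-order comparisons), whereas the paper works globally: it shows every subextreme point of the nonconvex set $\mathcal{V}^N[C]$ of admissible tuples is a tuple of Dirac masses on the hyperplane, identifies these with the extreme points of the compact convex set $\mathcal{F}^N(K;C)$ of flat tuples, and concludes $\overline{\mathrm{co}}\,\mathcal{V}^N(K;C)=\mathcal{F}^N(K;C)$ by Krein--Milman together with Milman's converse.

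The problem is that your sufficiency argument has a genuine gap at exactly the point you flag as ``the main obstacle.'' Its pivotal ingredient --- that the achievable laws of a sum of two random variables with fixed marginals \emph{fill out} the convex-order interval between the countermonotone sum and the comonotone sum --- is asserted, not proved, and as stated it is false: take $\mu_1=\mu_2$ uniform on $\{0,1\}$; the law uniform on $\{1/2,3/2\}$ has the right mean, dominates the countermonotone sum $\delta_1$ and is dominated by the comonotone sum in convex order, yet it is not the law of $X_1+X_2$ for any coupling, since every such sum is supported in $\{0,1,2\}$. So achievability of a prescribed sum-law is not governed by convex order alone; it would have to be established specifically for your setting (absolutely continuous marginals with monotone densities, supports controlled by the hypothesis $s_k\le D$), and that is precisely the content you defer (``I expect \dots to carry the weight of the argument''). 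The same applies to the strengthened induction hypothesis (coupling nonincreasing-density marginals so the sum has a prescribed nondecreasing-density law with ``small enough support''), to the bookkeeping that keeps each intermediate target achievable, and to the unequal-length uniform base case, none of which is carried out. The $N=2$ base case and the reduction $X_N:=D-(X_1+\dots+X_{N-1})$ are fine, but without the achievability lemma the induction does not close, so the substance of the theorem remains unproved in your proposal.
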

The necessity of these inequalities is trivial, and the proof of the sufficiency is the main part of this paper. In \cref{sec:D_lr_description} we consider the convex set $\mathcal{D}_{AC}[l, r]$ of probability measures concentrated on the segment $[l, r]$ with nonincreasing density functions and its closure $\mathcal{D}[l, r] \supset \mathcal{D}_{AC}[l, r]$ in the weak topology.
After that we consider the subset $\mathcal{D}[l, r; e] \subset \mathcal{D}[l, r]$ of measures with nonincreaing densities and with the fixed first moment equal to $e$. The set $\mathcal{D}[l, r; e]$ is a closed convex set, and we prove (see \cref{cor:ex_points_of_D_lre}) that every extreme point of $\mathcal{D}[l, r; e]$ has the form $\alpha \lambda[l, p^{(1)}] + (1 - \alpha)\lambda[l, p^{(2)}]$, where $l \le p^{(1)} \le p^{(2)} \le r$ and $\lambda[a, b]$ is the normalized restriction of the Lebesgue measure to the segment $[a, b]$.

In \cref{sec:VNC} we construct the set $\mathcal{V}^N[C]$ of $N$-tuples of probability measures $\{\mu_k\}_{k = 1}^N$ satisfying the conditions of \cref{thm:main_theorem} with the fixed constant $C$. In that section we directly extend the concept of extreme points for nonconvex sets and call them \textit{subextreme points}. First we construct the convex set $\mathcal{D}^N[\vec{l}, \vec{r}; \vec{e}\,] = \prod_{k = 1}^N \mathcal{D}[l_k, r_k; e_k]$. The set $\mathcal{V}^N[C]$ is a union of sets $\mathcal{D}^N[\vec{l}, \vec{r}; \vec{e}\,]$ with different parameters $\vec{l}$, $\vec{r}$ and $\vec{e}$, and therefore every subextreme point of $\mathcal{V}^N[C]$ must be an extreme point of one of the sets $\mathcal{D}^N[\vec{l}, \vec{r}; \vec{e}\,]$. This means (see \cref{prop:2_ex_of_V_C_is_step_C_tuple}) that each item of a subextreme tuple has the form $\alpha \lambda[l, p^{(1)}] + (1 - \alpha)\lambda[l, p^{(2)}]$. Next, we explicitly represent each tuple of this form as a nontrivial convex combination of some elements from $\mathcal{V}^N[C]$ except for the case when each item of the tuple is a Dirac measure. In particular (see \cref{thm:2_ex_of_V_C}), we show that every subextreme point of $\mathcal{V}^N[C]$ has the form $\{\delta(l_k)\}_{k = 1}^N$ with $l_1 + \dots + l_N = C$. 

In \cref{sec:final} we consider the convex closure of $\mathcal{V}^N[C]$. Using a converse version of the Krein-Milman theorem we prove that every extreme point of the compact restriction of $\overline{\mathrm{co}}\,\mathcal{V}^N[C]$ is a subextreme point of $\mathcal{V}^N[C]$, and therefore has the form $\{\delta(l_k)\}_{k = 1}^N$ with $l_1 + \dots + l_N = C$. For every such tuple there exists a trivial transport plan with the required projections concentrated on the hyperplane $\{x_1 + \dots + x_N = C\}$. Finally, using the Krein-Milman theorem we prove that every point of $\mathcal{V}^N$ is a flat $N$-tuple of probability measures since every extreme point of the convex closure of $\mathcal{V}^N[C]$ is flat.

\subsection{Connection with other results}
The problem of the existence of a probability measure with fixed marginals and a given support was previously studied in several works. In 
\cite{strassen1965, Kellerer} authors proved the following criterion for the case of two spaces.
\begin{theorem*}
Let $X$ and $Y$ be Polish spaces equipped with probability measures $\mu$ and $\nu$ respectively, and let $B$ be a closed subset of $X \times Y$. Then there exists a probability measure $\pi$ on $X \times Y$ with marginals $\mu$ and $\nu$ with the property $\supp(\pi) \subset B$ if and only if $\mu(B_1) + \nu(B_2) \ge 1$ for every couple of closed sets $B_1 \subset X$ and $B_2 \subset Y$ such that $B \subset \mathrm{Pr}^{-1}_X(B_1) \cup \mathrm{Pr}^{-1}_Y(B_2)$.
\end{theorem*}
\noindent Unfortunately, this theorem cannot be generalized for the case of multimarginal problem.

The existence of a transport plan concentrated on the hyperplane is a particular case of a Monge-Kantorovich problem with additional linear constraints is considered in the paper \cite{Zaev}. For given subspace $W$ of an appropriate functional space on $X_1 \times \dots \times X_N$ we consider the following optimization problem:
\[
\inf\left\{\int_{X_1 \times \dots \times X_N} c\,d\gamma \colon \gamma \in \Pi(\mu_1, \dots, \mu_N)\text{ and } \int \omega\,d\gamma = 0 \text{ for all } \omega \in W\right\}.
\]

Let $\omega$ be a continuous function such that $\omega > 0$ for all points not contained in the hyperplane $\{x_1 + \dots + x_N = C\}$ and that $\omega = 0$ otherwise. Then $\int \omega\,d\gamma = 0$ if and only if the transport plan $\gamma$ is concentrated on that hyperplane, so we can consider the space $W = \langle \omega \rangle$ in the transportation problem with additional constraints. Using the results from \cite{Zaev} one can easily verify that the tuple of measures $\{\mu_k\}_{k = 1}^N$ is flat if and only if for every collection of bounded continuous functions $\{\varphi_k\}_{k = 1}^N$, $\varphi_k \colon X_k \to \mathbb{R}$ satisfying the inequality 
\begin{equation}\label{eq:dual_part_1}
\varphi_1(x_1) + \dots + \varphi_N(x_N) \ge -\omega(x_1, \dots, x_N)
\end{equation}
for all $x_k \in X_k$ we have
\begin{equation}\label{eq:dual_part_2}
\int_{X_1}\varphi(x_1)\,\mu(dx_1) + \dots + \int_{X_N}\varphi_N(x_N)\,\mu(dx_N) \ge 0.
\end{equation}

One can easily prove that the tuple of measures $\{\mu_k\}_{k = 1}^N$ is flat if and only if there exist random variables $\{\xi_k\}_{k = 1}^N$ such that $\mathrm{law}(\xi_k) = \mu_k$ and
\begin{equation}\label{eq:martingale_like}
\mathbb{E}(\xi_k \mid \xi_1, \dots, \xi_{k - 1}, \xi_{k + 1}, \dots, \xi_N) = C + \xi_k - (\xi_1 + \dots + \xi_N)
\end{equation}
for all $k = 1, \dots, N$ and for an appropriate constant $C$. Indeed, in this case
\begin{align*}
\mathbb{E}(\xi_k(\xi_1 + \dots + \xi_N - C)) &= \mathbb{E}(\mathbb{E}(\xi_k(\xi_1 + \dots + \xi_N - C) \mid \xi_1, \dots, \xi_k, \xi_{k + 2}, \dots, \xi_N))\\
&= \mathbb{E}(\xi_k \cdot \mathbb{E}(\xi_1 + \dots + \xi_N - C \mid \xi_1, \dots, \xi_k, \xi_{k + 2}, \dots, \xi_N)) \\
&= \mathbb{E}(\xi_k \cdot 0) = 0,
\end{align*}
and therefore
\begin{align*}
    \mathbb{E}((\xi_1 + \dots + \xi_N - C)^2) = \sum_{k = 1}^N\mathbb{E}(\xi_k(\xi_1 + \dots + \xi_N - C)) - C \cdot \mathbb{E}(\xi_1 + \dots + \xi_N - C) = 0.
\end{align*}
This means that $\xi_1 + \dots + \xi_N = C$ with probability 1 and the joint distribution $\mathrm{law}(\xi_1, \dots, \xi_N)$ is a desired transport plan.

The problem of finding random variables satisfying equation \cref{eq:martingale_like} looks similar to the martingale transportation problem. In this problem for given probability distributions $\mu$ and $\nu$ and for given cost function $c$ we need to minimize
$\mathbb{E}(c(X, Y))$
over all couples of random variables $(X, Y)$ such that $\mathrm{law}(X) = \mu$, $\mathrm{law}(Y) = \nu$ and $\mathbb{E}(Y \mid X) = X$. More information on the martingale problem can be found in \cite{beiglbock2016}. Strassen \cite{strassen1965} showed that the existence of such couple $(X, Y)$ is equivalent to the fact that the measures $\mu$ and $\nu$ are in convex order, i.e. $\int \varphi\,d\mu \le \int \varphi\,d\nu$ for any convex function $\varphi$. We propose that there is a similar criterion under which the tuple of probability measures $\{\mu_k\}_{k = 1}^N$ is flat. 

Another connected problem is a multistochastic Monge-Kantorovich problem. In this problem for a given integer number $1 \le k < N$ we consider the $(N, k)$ minimization problem
$\int c d \pi \to \inf$ of the space of measures with fixed projections onto every  $X_{i_1} \times \dots \times X_{i_k}$
for arbitrary set of $k$ indices $\{i_1, \dots, i_k\} \subset \{1, \dots, N\}$. In \cite{GlKoZi} authors consider the multistochastic Monge-Kantorovich problem for the spaces $X_i = [0, 1]$, $1 \le i \le 3$ with the fixed 2-dimensional Lebesgue measures on the square $[0, 1]^2$ and with the cost function $\pm x_1x_2x_3$. The cost function $-x_1x_2x_3$ in a sense generalizes the function $-x_1x_2 - x_1x_3 - x_2x_3$, and an optimal transport plan in this problem is concentrated on the set $x_1 \oplus x_2 \oplus x_3 = 1$, where $\oplus$ is a bitwise addition. In addition, in \cite{multistochastic2020} authors show that the $(N, k)$-multistochastic problem is equivalent to the multimarginal transportation problem with $\binom{N}{k}$ spaces and additional constraint that the transport plan is concentrated on the lower-dimensional hyperplane, which is a natural generalization of our problem.

The general marginal problem was also studied in \cite{Kellerer64}. In this problem we need to find a transport plan with the fixed multidimensional marginals. Unlike the multimarginal problem, the existence of such transport plan is not guaranteed. In \cite{Kellerer64} author proved the dual criterion for a transport plan existence, which can be generalized to our problem as in equations \cref{eq:dual_part_1,,eq:dual_part_2}.

\Cref{thm:main_theorem} is also connected with multi-marginal optimal transportation with the product cost function $c(x_1, \dots, x_N) = x_1x_2 \dots x_N$. In the paper \cite{product_cost} authors consider the multi-marginal Monge-Kantorovich problem for the $3$-tuple of measures $(\mu_1, \mu_2, \mu_3)$, where all $\mu_i$ are the uniform measures on the segment $[0, 1]$ and the cost function $c(x_1, x_2, x_2) = x_1x_2x_3$. After the change of coordinates $y_k = -\ln(x_k)$ the product cost problem transforms into the following: we need to construct a minimizer of the functional
\[
\int_{\R^3} e^{-(y_1 + y_2 + y_3)}\gamma(dy_1, dy_2, dy_3)
\]
over all transport plans $\gamma$ such that $\prj{k}{\gamma} = \nu_k = e^{-y_k}\cdot \mathbbm{1}[y_k \ge 0]\,dy_k$ for all $k \in \{1, 2, 3\}$.

Here, most of the conditions of \cref{thm:main_theorem} are satisfied: for $1 \le k \le 3$, the measure $\nu_k$ is absolutely continuous, $\supp(
\nu_k) = [0, +\infty)$ and the density function of $\nu_k$ is decreasing on $[0, +\infty)$; in addition, the cost function $c(y_1, y_2, y_3) = h(y_1 + y_2 + y_3)$, where $h(x) = e^{-x}$ is convex. Unfortunately, we can not apply \cref{thm:main_theorem} since $\supp(\mu_k)$ is not a segment. Moreover, it was proved in \cite{product_cost} that an optimal solution $\gamma$ of the problem can be written as $\gamma = \gamma_1 + \gamma_2$, where $\gamma_1$ is one-dimensional and $\gamma_2$ is concentrated on the flat triangle (and hence on the plane) $\{(x_1, x_2, x_3) \colon l \le x_k \le r \text{ and } x_1 + x_2 + x_3 = 2l + r\}$ and $\prj{k}{\gamma_2} = \nu_k|_{[l, r]}$. However, \cref{thm:main_theorem} is applicable to the restrictions of $\nu_k$ to the segment $[l, r]$, which was our main motivation to prove it.

\section{Measures with nonincreasing density function}\label{sec:D_lr_description}
\subsection{Notation and preliminaries from the measure theory}
Let $X$ be a Polish space. We denote by $\mathcal{M}(X)$ the space of all signed Borel measures of bounded variation on $X$. We equip the space $\mathcal{M}(X)$ with the weak topology $\sigma(\mathcal{M}(X), C_b(X))$.

The space $\mathcal{M}(X)$ with the weak topology is a locally convex vector space. Let $\mathcal{M}^+(X) \subset \mathcal{M}(X)$ be a subspace of nonnegative measures equipped with the induced weak topology. Instead of $\mathcal{M}(X)$, the space $\mathcal{M}^+(X)$ is metrizable, and therefore is first-countable (see \cite[Theorem 8.9.4]{Bogachev_measure_vol2}). In particular, the subset $A \subset \mathcal{M}^+(X)$ is closed if and only if for every sequence $\{\mu^{(n)}\}_{n = 1}^{\infty}$ of nonnegative measures contained in $A$ such that $\{\mu^{(n)}\}$ converges weakly to $\mu$ we have $\mu \in A$.

Suppose that $K$ is homeomorphically embedded into a Polish space $X$ and the image of $K$ is closed in $X$. For every measure $\mu \in \mathcal{M}(K)$ let $\widehat{\mu}$ denote its extension to $\mathcal{B}(X)$ defined by $\widehat{\mu}(B) = \mu(B \cap K)$. The mapping $\mu \mapsto \widehat{\mu}$ is a homeomorphic embedding of $\mathcal{M}(K)$ into $\mathcal{M}(X)$. Denote by $\mathcal{M}^+(K; X)$ the subspace of nonnegative measures $\mu \in \mathcal{M}^+(X)$ such that $\supp(\mu) \subset K$. Then the image of $\mathcal{M}^+(K)$ under the mapping $\mu \mapsto \widehat{\mu}$ coincides with $\mathcal{M}^+(K; X)$. In the following paper we will identify the space $\mathcal{M}(K)$ with the subspace of $\mathcal{M}(X)$, and the space $\mathcal{M}^+(K)$ with $\mathcal{M}^+(K; X)$. 

Denote by $\mathcal{P}(X) \subset \mathcal{M}^+(X)$ the space of probability measures on $X$. Let $K$ be a compact subset of $X$. Then $\mathcal{P}(K) \subset \mathcal{P}(X)$, and it follows from the Prokhorov theorem (see \cite[Theorem 8.6.2]{Bogachev_measure_vol2}) that $\mathcal{P}(K)$ is sequentially compact, and therefore, since $\mathcal{P}(K) \subset \mathcal{M}^+(K)$ and $\mathcal{M}^+(K)$ is metrizable, the space $\mathcal{P}(K)$ is a compact convex subset of $\mathcal{M}(K)$.

Let us recall the following common properties of the weak convergence.

\begin{theorem}[{{\cite[Corollary 8.2.10]{Bogachev_measure_vol2}}}]
Let $\{\mu^{(n)}\}_{n = 1}^{\infty}$ be a sequence of probability measures, and let $\mu$ be a probability measure on a Polish space $X$. Suppose that the sequence $\{\mu^{(n)}\}_{n = 1}^\infty$ converges weakly to $\mu$. Then
\begin{assertions}
    \item \label{st:weak_measure_of_closed_set} for every closed set $F$ we have
    $$
    \limsup_{n \to \infty} \mu^{(n)}(F) \le \mu(F);
    $$
    \item \label{st:weak_measure_of_open_set} for every open set $U$ we have
    $$
    \liminf_{n \to \infty} \mu^{(n)}(U) \ge \mu(U).
    $$
\end{assertions}
\end{theorem}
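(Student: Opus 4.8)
The plan is to follow exactly the roadmap laid out in the introduction, organised around the notion of subextreme points. First, the necessity direction is immediate: if $\gamma$ is a flat plan concentrated on $\{x_1 + \dots + x_N = C\}$, then with random variables $\xi_k$ having law $\mu_k$ and $\sum \xi_k = C$ a.s., we get $C = \sum \mathbb{E}(\mu_k)$ by linearity; and fixing all coordinates but the $k$-th, the support of the conditional law of $\xi_k$ has diameter at most $r_k - l_k$, yet it must reach from the minimal to the maximal attainable value of $\xi_k = C - \sum_{j \neq k}\xi_j$, which forces $r_k - l_k \le C - (l_1 + \dots + l_N)$ after noting $\xi_j \ge l_j$. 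So the work is all in sufficiency.

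For sufficiency, fix $C$ and the data $\vec l, \vec r$ satisfying the hypotheses, and consider the set $\mathcal{V}^N[C]$ of all admissible $N$-tuples: tuples $\{\mu_k\}$ with $\mu_k \in \mathcal{D}[l_k, r_k; e_k]$ for some means $e_k$ with $\sum e_k = C$ and with the diameter conditions. By \cref{cor:ex_points_of_D_lre}, every extreme point of each convex slice $\mathcal{D}[l_k, r_k; e_k]$ is of the form $\alpha\lambda[l_k, p^{(1)}] + (1-\alpha)\lambda[l_k, p^{(2)}]$; so every subextreme point of the (nonconvex) union $\mathcal{V}^N[C]$ — meaning a point admitting no nontrivial representation as a convex combination of other points of $\mathcal{V}^N[C]$ — has each coordinate of this two-step form (\cref{prop:2_ex_of_V_C_is_step_C_tuple}). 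The heart of the argument is then the explicit decomposition step: I would show that any such two-step tuple, \emph{unless every $\mu_k$ is a Dirac mass} $\delta(l_k)$, can be written as a genuine convex combination $t\{\mu_k'\} + (1-t)\{\mu_k''\}$ with both endpoints still in $\mathcal{V}^N[C]$ — this is done coordinate by coordinate, perturbing the breakpoints $p^{(i)}$ and weights $\alpha$ while keeping each mean $e_k$ fixed and redistributing the tiny changes in the $e_k$'s among the other coordinates so that the sum stays $C$ and no diameter constraint is violated. Hence the only subextreme points of $\mathcal{V}^N[C]$ are the Dirac tuples $\{\delta(l_k)\}$ with $l_1 + \dots + l_N = C$ (\cref{thm:2_ex_of_V_C}), each of which is trivially flat since the single atom sits on the hyperplane.

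The final step promotes this from subextreme points to all of $\mathcal{V}^N[C]$. One passes to $\overline{\mathrm{co}}\,\mathcal{V}^N[C]$ inside the compact set $\prod_k \mathcal{P}([l_k, r_k])$; a converse form of the Krein--Milman theorem gives that every extreme point of this compact convex set lies in $\mathcal{V}^N[C]$, hence (being extreme) is a subextreme point of $\mathcal{V}^N[C]$, hence a Dirac tuple on the hyperplane, hence flat. Since flatness is equivalent (via \cref{eq:dual_part_1,eq:dual_part_2}, or directly) to a closed-under-limits and closed-under-convex-combination condition — concretely, the set of flat tuples with common target constant $C$ is convex because one may glue plans on a common disintegration variable, and it is weakly closed because the hyperplane is closed — the whole of $\overline{\mathrm{co}}\,\mathcal{V}^N[C]$ consists of flat tuples by Krein--Milman, and in particular every original tuple $\{\mu_k\} \in \mathcal{V}^N[C]$ is flat, which is \cref{thm:main_theorem}. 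The optimality claim (that the resulting plan solves the repulsive harmonic problem) is then immediate from \cite[Lemma 4.3]{dim-ger-nen} applied with $h(y) = |y - C|^2$.

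\textbf{Main obstacle.} The delicate point is the explicit decomposition of a two-step subextreme tuple into a nontrivial convex combination inside $\mathcal{V}^N[C]$: one must perturb the parameters $(\alpha_k, p_k^{(1)}, p_k^{(2)})$ of several coordinates simultaneously so that (i) each perturbed measure still has a nonincreasing density on $[l_k, r_k]$, (ii) the perturbed means sum back to $C$, and (iii) the diameter inequalities $r_k - l_k \le C - \sum l_j$ are preserved — and to verify that the \emph{only} configuration where no such perturbation exists is the all-Dirac case. Keeping careful track of which constraints are active (a $p_k^{(i)}$ hitting $l_k$ or $r_k$, a mean forced to an extreme value) is where the argument needs the diameter hypothesis in an essential way, and this bookkeeping is the technical core of the paper.
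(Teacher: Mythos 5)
Your proposal does not address the statement at hand. The statement you were asked to prove is the Portmanteau-type theorem quoted from \cite[Corollary 8.2.10]{Bogachev_measure_vol2}: if probability measures $\mu^{(n)}$ converge weakly to $\mu$ on a Polish space, then $\limsup_n \mu^{(n)}(F) \le \mu(F)$ for every closed $F$ and $\liminf_n \mu^{(n)}(U) \ge \mu(U)$ for every open $U$ (assertions \cref{st:weak_measure_of_closed_set} and \cref{st:weak_measure_of_open_set}). This is a standard measure-theoretic fact which the paper simply cites and does not reprove. What you have written instead is a strategy for the paper's main result, \cref{thm:main_theorem}, about flat $N$-tuples of measures, subextreme points of $\mathcal{V}^N[C]$ and the Krein--Milman argument. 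Nothing in your text uses the definition of weak convergence against bounded continuous test functions, nothing mentions closed or open sets, and no inequality of the required form is derived, so as a proof of the quoted theorem it is entirely missing.

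For the record, the intended statement has a short classical proof you could have given. For a closed set $F$ put $f_k(x) = \max\bigl(0,\, 1 - k\,\mathrm{dist}(x, F)\bigr)$; each $f_k$ is bounded, continuous, satisfies $\mathbbm{1}_F \le f_k \le 1$, and $f_k \downarrow \mathbbm{1}_F$ pointwise because $F$ is closed. Weak convergence gives, for every fixed $k$,
\[
\limsup_{n \to \infty} \mu^{(n)}(F) \le \lim_{n \to \infty}\int f_k\,d\mu^{(n)} = \int f_k\,d\mu,
\]
and letting $k \to \infty$ with dominated convergence yields $\limsup_n \mu^{(n)}(F) \le \mu(F)$. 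Assertion \cref{st:weak_measure_of_open_set} then follows by applying this to the closed complement $F = X \setminus U$ and using that all measures involved are probability measures, so $\mu^{(n)}(U) = 1 - \mu^{(n)}(F)$ and $\mu(U) = 1 - \mu(F)$.
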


\begin{corollary}\label{cor:support_of_converging_measures}
If $K$ is a closed subset of $X$, then $\mathcal{P}(K)$ is a closed subset of $\mathcal{P}(X)$.
\end{corollary}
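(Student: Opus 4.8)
The plan is to reduce closedness to a statement about weakly convergent sequences and then invoke the portmanteau-type theorem stated just above. Since $X$ is Polish, the space $\mathcal{M}^+(X)$ is metrizable, hence first-countable, and $\mathcal{P}(K) \subset \mathcal{P}(X) \subset \mathcal{M}^+(X)$; therefore it suffices to show that whenever a sequence $\{\mu^{(n)}\}_{n=1}^\infty$ of probability measures with $\supp(\mu^{(n)}) \subset K$ (equivalently, with $\mu^{(n)}(K) = 1$) converges weakly to some $\mu \in \mathcal{P}(X)$, one has $\mu \in \mathcal{P}(K)$.

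So let $\{\mu^{(n)}\}$ be such a sequence converging weakly to $\mu$. Since $K$ is a closed set, assertion \ref{st:weak_measure_of_closed_set} of the cited theorem yields
\[
1 \;=\; \limsup_{n \to \infty} \mu^{(n)}(K) \;\le\; \mu(K) \;\le\; 1,
\]
so $\mu(K) = 1$. Because $K$ is closed and carries full $\mu$-measure, $\supp(\mu) \subset K$, which under the identification of $\mathcal{P}(K)$ with $\{\nu \in \mathcal{P}(X) : \supp(\nu) \subset K\}$ established earlier means precisely $\mu \in \mathcal{P}(K)$. Hence $\mathcal{P}(K)$ is sequentially closed, and by metrizability of $\mathcal{M}^+(X)$ it is closed in $\mathcal{P}(X)$.

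There is no real obstacle here: the statement is a direct corollary of the semicontinuity property of weak convergence on closed sets. The only two points requiring a moment's care are the passage from "closed" to "sequentially closed" (legitimate because $\mathcal{M}^+(X)$ is metrizable) and the bookkeeping identification of $\mathcal{P}(K)$ with the measures on $X$ supported in $K$.
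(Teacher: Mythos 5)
Your proof is correct and follows exactly the route the paper intends for this corollary: reduce to sequential closedness via the metrizability of $\mathcal{M}^+(X)$ noted earlier, apply assertion \ref{st:weak_measure_of_closed_set} with $F = K$ to get $\mu(K) = 1$, and conclude $\supp(\mu) \subset K$. Nothing further is needed.
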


In the following part of the paper, in most of the cases $X$ is a real line $\R$, and $K$ is a closed segment. So let us recall the following criterion of the weak convergence of measures on the real line in terms of cumulative distribution functions.
\begin{corollary}[{{\cite[Corollary 8.2.11]{Bogachev_measure_vol2}}}]\label{cor:convergence_dist_functions}
A sequence $\{\mu^{(n)}\}_{n = 1}^{\infty}$ of probability measures on the real line converges weakly to a probability measure $\mu$ precisely when the corresponding distribution functions $F_{\mu}^{(n)}$ converge to the distribution function $F_{\mu}$ of the measure $\mu$ at the points of continuity of $F_\mu$, where $F_\mu(t) = \mu((-\infty, t))$.
\end{corollary}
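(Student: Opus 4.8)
The plan is to prove the two implications separately: \emph{necessity} will follow at once from the portmanteau inequalities recorded above (\cref{st:weak_measure_of_closed_set} and \cref{st:weak_measure_of_open_set}), and \emph{sufficiency} from a direct approximation of an arbitrary $f \in C_b(\R)$ by a step function built on continuity points of $F_\mu$. Two elementary facts are used throughout. First, since $F_\mu$ is nondecreasing, its set $D$ of discontinuity points is at most countable, so $\R \setminus D$ is dense, and $t \notin D$ exactly when $\mu(\{t\}) = 0$. Second, with the convention $F_\mu(t) = \mu\big((-\infty, t)\big)$ the set $(-\infty, t)$ is open while $(-\infty, t]$ is closed; the two differ by the atom at $t$ and must be handled separately.

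For \emph{necessity} I would fix a continuity point $t$ of $F_\mu$, apply \cref{st:weak_measure_of_open_set} to $(-\infty, t)$ to get $\liminf_n F_{\mu}^{(n)}(t) \ge F_\mu(t)$, and apply \cref{st:weak_measure_of_closed_set} to $(-\infty, t]$, together with $F_{\mu}^{(n)}(t) \le \mu^{(n)}\big((-\infty, t]\big)$, to obtain
\[
F_\mu(t) \le \liminf_n F_{\mu}^{(n)}(t) \le \limsup_n F_{\mu}^{(n)}(t) \le \mu\big((-\infty, t]\big) = F_\mu(t) + \mu(\{t\}) = F_\mu(t),
\]
which squeezes $F_{\mu}^{(n)}(t) \to F_\mu(t)$.

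For \emph{sufficiency} I would fix $f \in C_b(\R)$ with $\|f\|_\infty \le M$ and $\varepsilon > 0$ and build a step-function approximation. Using $F_\mu \to 0$ at $-\infty$, $F_\mu \to 1$ at $+\infty$, and the density of $\R \setminus D$, pick $a < b$ in $\R \setminus D$ with $F_\mu(a) < \varepsilon$ and $1 - F_\mu(b) < \varepsilon$, so $\mu\big(\R \setminus [a, b)\big) < 2\varepsilon$ and, by the hypothesis applied at $a$ and $b$, also $\mu^{(n)}\big(\R \setminus [a, b)\big) < 2\varepsilon$ for $n$ large. By uniform continuity of $f$ on $[a, b]$, choose $a = t_0 < t_1 < \dots < t_m = b$, all in $\R \setminus D$, on whose subintervals $f$ oscillates by less than $\varepsilon$, and set $g = \sum_{j=0}^{m-1} f(t_j)\,\mathbbm{1}_{[t_j, t_{j+1})}$, extended by $0$ off $[a, b)$; then $\|g\|_\infty \le M$, $|f - g| < \varepsilon$ on $[a, b)$, and $|f - g| \le M$ everywhere. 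Now $\big|\int f\,d\mu^{(n)} - \int f\,d\mu\big|$ splits as $\big|\int (f - g)\,d\mu^{(n)}\big| + \big|\int g\,d\mu^{(n)} - \int g\,d\mu\big| + \big|\int (g - f)\,d\mu\big|$; the outer two terms are each at most $\varepsilon + 2M\varepsilon$ (integrate over $[a, b)$ and over its complement; for the first term, once $n$ is large), and the middle term tends to $0$ because $\int g\,d\mu^{(n)} = \sum_{j} f(t_j)\big(F_{\mu}^{(n)}(t_{j+1}) - F_{\mu}^{(n)}(t_j)\big) \to \sum_{j} f(t_j)\big(F_\mu(t_{j+1}) - F_\mu(t_j)\big) = \int g\,d\mu$, every $t_j$ being a continuity point of $F_\mu$. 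Letting first $n \to \infty$ and then $\varepsilon \downarrow 0$ completes the argument.

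The argument is routine and there is no genuine obstacle; the one thing to watch is that the truncation levels $a, b$ and all partition nodes $t_j$ must be chosen outside $D$, so that the hypothesis is applicable there — which is exactly why the density of $\R \setminus D$ (equivalently, the countability of the discontinuity set of a monotone function) is essential. A minor secondary point is the bookkeeping forced by the left-open convention $F_\mu(t) = \mu((-\infty, t))$ in the necessity step, where one must keep the open interval and its closure separate.
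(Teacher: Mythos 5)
Your proof is correct: the necessity direction is exactly the portmanteau argument (open set $(-\infty,t)$ and closed set $(-\infty,t]$, with the atom at $t$ vanishing at continuity points), and the sufficiency direction via truncation at continuity points $a,b$ and a step-function approximation with all nodes chosen off the countable discontinuity set is the standard textbook argument; the small constant-bookkeeping issues (e.g.\ $\mu^{(n)}(\R\setminus[a,b))<2\varepsilon$ for large $n$) are handled legitimately because $F_\mu(a)<\varepsilon$ is a strict inequality. The paper itself gives no proof of this statement --- it is quoted verbatim from Bogachev (Corollary 8.2.11) --- so there is nothing to compare against beyond noting that your argument is the classical one that reference contains.
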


\subsection{Measures with nonincreasing density functions}

\begin{definition}
Given a segment $[l, r]$ on the real line. Denote \[\lambda[l, r] = \frac{1}{r - l}\mathcal{L}|_{[l, r]},\] where $\mathcal{L}$ is the Lebesgue measure  on the real line. In addition, denote by $\lambda[l, l]$ the Dirac measure concentrated at the point $l$. Thus, the measure $\lambda[l, r]$ is well-defined and $\lambda[l, r] \in \mathcal{P}(\R)$ for all $l \le r$
\end{definition}

\begin{definition}
Given a segment $[l, r]$ on the real line, denote by $\mathcal{D}_{AC}[l, r]$ the subset of probability measures $\mu \in \mathcal{P}[l, r]$ such that $\mu = \rho(x)\,dx$ for some nonincreasing density function $\rho \in L^1[l, r]$. We allow the function $\rho$ to be unbounded: in this case $\rho(l) = +\infty$.
\end{definition}

It trivially follows from the definition that $\mathcal{D}_{AC}[l, r]$ is a convex subset of $\mathcal{P}[l, r]$. For any $t \in (l, r]$ the measure $\lambda[l, t]$ is contained in $\mathcal{D}_{AC}[l, r]$. If the sequence $\{t^{(n)}\}_{n = 1}^{\infty} \subset (l, r]$ converges to $l$, then the sequence $\{\lambda[l, t^{(n)}]\}_{n = 1}^{\infty}$ converges weakly to the measure $\lambda[l, l] = \delta(l)$. Since $\delta(l)$ is singular, this measure is not contained in $\mathcal{D}_{AC}[l, r]$; hence, the space $\mathcal{D}_{AC}[l, r]$ is not closed. Let us construct the closure of this space.

\begin{definition}
Given a segment $[l, r]$ on the real line, denote by $\mathcal{D}[l, r]$ the subset of measures $\mu \in \mathcal{P}[l, r]$ that can be written as \[\mu = \alpha\delta(l) + (1 - \alpha)\mu',\] where $0 \le \alpha \le 1$ and $\mu' \in \mathcal{D}_{AC}[l, r]$. In addition, we denote by $\mathcal{D}[l, l]$ the space containing the unique measure $\delta(l)$.
\end{definition}

We claim that the closure of $\mathcal{D}_{AC}[l, r]$ coincides with $\mathcal{D}[l, r]$. To prove this, we give a description of $\mathcal{D}[l, r]$ in terms of a cumulative distribution function.

\begin{proposition} \label{prop:D_description_dist_function}
Given a probability measure $\mu$ on the real line, denote by $F_\mu$ the cumulative distribution function of $\mu$: $F_\mu(t) = \mu((-\infty, t))$. Then the space $\mathcal{D}[l, r]$ contains the measure $\mu$ if and only if
\begin{conditions}
    \item \label{cond:supp_mu_on_lr} $F_\mu(l) = 0$ and $F_\mu(r+) = 1$;
    \item \label{cond:F_mu_is_concave} $F_\mu$ is concave on $(l, +\infty)$.
\end{conditions}
\end{proposition}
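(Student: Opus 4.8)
The plan is to pass between the description of $\mathcal{D}[l,r]$ as ``atom at $l$ plus nonincreasing density'' and the concavity of the left-continuous distribution function $F_\mu(t)=\mu((-\infty,t))$. The degenerate case $l=r$ (where both sides just say $\mu=\delta(l)$) is handled separately, so below I assume $l<r$.

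For the ``only if'' direction, write $\mu=\alpha\delta(l)+(1-\alpha)\mu'$ with $\mu'=\rho(x)\,dx\in\mathcal{D}_{AC}[l,r]$. Since $\supp(\mu)\subset[l,r]$ we get $F_\mu(l)=0$ and $F_\mu(r+)=1$ at once. For $t>l$ one has $F_\mu(t)=\alpha+(1-\alpha)\int_l^t\tilde\rho(s)\,ds$, where $\tilde\rho$ equals $\rho$ on $(l,r)$ and $0$ on $[r,+\infty)$; as $\tilde\rho$ is nonincreasing with finite integral, its primitive is concave on $(l,+\infty)$, and adding the constant $\alpha$ preserves concavity, which gives the concavity of $F_\mu$ on $(l,+\infty)$.

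For the ``if'' direction, the conditions $F_\mu(l)=0$, $F_\mu(r+)=1$ give $\mu\in\mathcal{P}[l,r]$. Set $\alpha:=\mu(\{l\})=F_\mu(l+)$; if $\alpha=1$ then $\mu=\delta(l)\in\mathcal{D}[l,r]$, so assume $\alpha<1$ and put $\mu':=\tfrac{1}{1-\alpha}(\mu-\alpha\delta(l))$, which is a probability measure on $[l,r]$ (nonnegativity uses $\mu(\{l\})=\alpha$) with $F_{\mu'}$ vanishing on $(-\infty,l]$ and nondecreasing and concave on $(l,+\infty)$, with $F_{\mu'}(l+)=0$ and $F_{\mu'}(r+)=1$. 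Since a concave function on an open interval is locally Lipschitz, $F_{\mu'}$ is absolutely continuous on $[l+\varepsilon,t]$ for $0<\varepsilon<t-l$; letting $\varepsilon\to0^+$ in $F_{\mu'}(t)=F_{\mu'}(l+\varepsilon)+\int_{l+\varepsilon}^tF_{\mu'}'(s)\,ds$ and using $F_{\mu'}(l+)=0$ together with $F_{\mu'}'\ge0$ yields $F_{\mu'}(t)=\int_l^tF_{\mu'}'(s)\,ds$ for all $t>l$. Hence $\mu'$ has the a.e.-defined density $\rho:=F_{\mu'}'$, which is nonnegative, nonincreasing by concavity of $F_{\mu'}$, and satisfies $\int_l^r\rho=1$; after replacing $\rho$ by a genuinely nonincreasing representative (allowing $\rho(l)=+\infty$) we obtain $\mu'\in\mathcal{D}_{AC}[l,r]$ and therefore $\mu=\alpha\delta(l)+(1-\alpha)\mu'\in\mathcal{D}[l,r]$.

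I expect the only real subtlety to be the limit $\varepsilon\to0^+$ recovering $F_{\mu'}$ as the indefinite integral of its own derivative on the half-open interval $(l,+\infty)$: this is exactly where the possibly unbounded density at the left endpoint $l$ lives and where the concavity hypothesis does the work. Everything else is bookkeeping around the left-continuous convention $F_\mu(t)=\mu((-\infty,t))$ and standard one-variable facts about concave functions (local Lipschitz continuity, monotonicity of the derivative, the fundamental theorem of calculus for their indefinite integrals).
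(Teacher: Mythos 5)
Your argument is correct and follows essentially the same route as the paper: both directions hinge on the decomposition $\mu=\alpha\delta(l)+(1-\alpha)\mu'$ and on recovering a nonincreasing density from the concavity of the distribution function, with your $\varepsilon\to 0^+$ limit for the indefinite integral of $F_{\mu'}'$ playing exactly the role of the paper's appeal to the integral identity for the right derivative $\partial_+F_\mu$ of a concave function. The only cosmetic difference is that you subtract the atom before differentiating, whereas the paper differentiates $F_\mu$ directly and then normalizes by $1-\alpha$.
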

\begin{proof}
Suppose that $\mu \in \mathcal{D}[l, r]$. Since $\supp(\mu) \subset [l, r]$, we have $F_\mu(l) = \mu((-\infty, l)) = 0$ and $F_\mu(r+) = \mu((-\infty, r]) = 1$, and therefore \cref{cond:supp_mu_on_lr} holds.

By definition, $\mu = \alpha \delta(l) + (1 - \alpha)\mu'$, where $0 \le \alpha \le 1$ and $\mu' \in \mathcal{D}_{AC}[l, r]$. Since $\mu' \in \mathcal{D}_{AC}[l, r]$, there exists a function $\rho(x)$ such that $\rho(x)$ is nonincreasing on $[l, r]$ and is equal to $0$ outside that segment. If $t \in [l, +\infty)$, then \[F_{\mu'}(t) = \int_l^t\rho(x)\,dx,\]
and therefore, since $\rho(x)$ is nonincreasing on $[l, +\infty)$, the function $F_{\mu'}$ is concave on $[l, +\infty)$. Finally, $F_{\delta(l)}(x) = 1$ for all $x \in (l, +\infty)$. In particular, the cumulative distribution function $F_{\delta(l)}$ is concave on $(l, +\infty)$, and therefore $F_\mu = \alpha F_{\delta(l)} + (1 - \alpha)F_{\mu'}$ is also concave on $(l, +\infty)$. This implies \cref{cond:F_mu_is_concave}.

Suppose that $F_\mu$ satisfies \cref{cond:supp_mu_on_lr,,cond:F_mu_is_concave}. Since $F_\mu$ is concave on $(l, +\infty)$, this function is semi differentiable on $(l, +\infty)$. Denote by $\partial_+F_\mu(x)$ the right derivative  of $F_\mu$ at $x$ for all $x \in (l, +\infty)$. Since $F_\mu$ is concave on $(l, +\infty)$, the function $\partial_+F_\mu(x)$ is nonincreasing on that interval and
\[
\int_a^b \partial_+F_\mu(x)\,dx = F_\mu(b) - F_\mu(a)
\]
for all $[a, b] \subset (l, +\infty)$. See \cite[Proposition 1.6.1]{convex_functions} for a proof of this statement.

Since $F_\mu$ is nondecreasing on $(l, +\infty)$, we have $\partial_+F_\mu(x) \ge 0$ for all $x \in (l, +\infty)$. In addition, since $F_\mu$ is constant on $(r, +\infty)$, we have $\partial_+F_\mu(x) = 0$ for all $x > r$. Denote $\alpha = F_\mu(l+)$. We have $0 \le \alpha \le 1$. If $\alpha = 1$, then $\mu(\{l\}) = 1$, and therefore $\mu = \delta(l) \in \mathcal{D}[l, r]$. Otherwise, let
\begin{align*}
    \rho(x) = \begin{dcases}
        0 &\text{if $x < l$,}\\
        +\infty &\text{if $x = l$,}\\
        \frac{\partial_+F_\mu(x)}{1 - \alpha} &\text{if $x > l$.}
    \end{dcases}
\end{align*}
Let $\mu' = \rho(x)\,dx$. One can easily verify that $\mu' \in \mathcal{D}_{AC}[l, r]$, and the cumulative distribution function of the measure $\alpha\delta(l) + (1 - \alpha)\mu'$ is equal to $F_\mu$ at all points of the real line. Thus, $\mu = \alpha \delta(l) + (1 - \alpha)\mu'$, and therefore $\mu \in \mathcal{D}[l, r]$.
\end{proof}

\begin{corollary}\label{lem:mu_ab_bounded_from_above}
If $\mu \in \mathcal{D}[l, r]$, then \[
\mu([a, b]) \le \frac{b - a}{a - l}
\] for all $[a, b] \subset (l, +\infty)$.
\end{corollary}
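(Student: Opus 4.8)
The plan is to reduce immediately to the structural description of $\mathcal{D}[l,r]$ provided by \cref{prop:D_description_dist_function}. Given $\mu \in \mathcal{D}[l,r]$, write $\mu = \alpha\,\delta(l) + (1-\alpha)\mu'$ with $0 \le \alpha \le 1$ and $\mu' = \rho(x)\,dx \in \mathcal{D}_{AC}[l,r]$, where $\rho$ is nonincreasing on $[l,r]$ and vanishes outside it. Fix $[a,b] \subset (l,+\infty)$. The first observation is that the atom at $l$ is irrelevant, since $l \notin [a,b]$: we have $\mu([a,b]) = (1-\alpha)\,\mu'([a,b]) \le \mu'([a,b]) = \int_a^b \rho(x)\,dx$, so it suffices to prove the bound for $\mu'$. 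If $a \ge r$ this is trivial because then $\rho \equiv 0$ on $[a,b]$, so we may assume $l < a < r$.

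The core estimate then comes in two monotonicity steps. First, since $\rho$ is nonincreasing, $\rho(x) \ge \rho(a)$ for every $x \in [l,a]$, hence
\[
(a-l)\,\rho(a) \le \int_l^a \rho(x)\,dx \le \int_{\R}\rho(x)\,dx = 1,
\]
which gives $\rho(a) \le \frac{1}{a-l}$ (here $\rho(a) < +\infty$, because a nonincreasing $L^1$ function is finite away from the left endpoint). Second, using monotonicity of $\rho$ again, together with $\rho \ge 0 = \rho(x)$ for $x > r$ in the case $b > r$, we get $\int_a^b \rho(x)\,dx \le (b-a)\,\rho(a) \le \frac{b-a}{a-l}$. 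Chaining the inequalities yields $\mu([a,b]) \le \frac{b-a}{a-l}$.

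An essentially equivalent route avoids the decomposition and works directly with $F_\mu$: by \cref{cond:F_mu_is_concave} the function $F_\mu$ is concave and nondecreasing on $(l,+\infty)$ with values in $[0,1]$, so for $l < x < a < b'$ the chord-slope inequality for concave functions gives $\frac{F_\mu(b') - F_\mu(a)}{b'-a} \le \frac{F_\mu(a) - F_\mu(x)}{a-x} \le \frac{1}{a-x}$; letting $x \to l^+$ and $b' \to b^+$ turns this into $\mu([a,b]) = F_\mu(b+) - F_\mu(a) \le \frac{b-a}{a-l}$.

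I do not expect any genuine obstacle here: this is a short corollary of \cref{prop:D_description_dist_function}. The only points requiring care are purely bookkeeping --- the atom $\alpha\delta(l)$ dropping out because $[a,b]$ is bounded away from $l$, the degenerate subcase $a \ge r$, the possible unboundedness of $\rho$ at the single point $l$ (never evaluated, since $a > l$), and the fact that $F_\mu$ is defined with a strict inequality, so that $\mu([a,b]) = F_\mu(b+) - F_\mu(a)$.
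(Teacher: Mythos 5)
Your proposal is correct, and your primary argument is a genuinely different (more elementary) route than the paper's, while your ``essentially equivalent route'' is in fact exactly the paper's proof. The paper works entirely with the distribution function: invoking \cref{prop:D_description_dist_function}, it notes that $F_\mu$ is concave, hence continuous, on $(l,+\infty)$, takes $p \in (l,a)$, applies the chord-slope inequality $\frac{F_\mu(b)-F_\mu(a)}{b-a} \le \frac{F_\mu(a)-F_\mu(p)}{a-p}$, bounds the numerator by $1$, and lets $p \to l$ --- precisely your second sketch (your extra care with $F_\mu(b+)$ is harmless but unnecessary, since $b \in (l,+\infty)$ and concavity already gives continuity at $b$; note also the paper's line ``$\mu([a,b]) = F_\mu(a) - F_\mu(b)$'' is just a sign typo). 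Your main argument instead goes back to the definition of $\mathcal{D}[l,r]$: the atom $\alpha\delta(l)$ contributes nothing to $[a,b]$, and for the nonincreasing density one gets $\rho(a) \le \frac{1}{a-l}$ from $(a-l)\rho(a) \le \int_l^a \rho \le 1$ and then $\int_a^b \rho \le (b-a)\rho(a)$. This is completely sound, including the bookkeeping you flag (the degenerate case $a \ge r$, finiteness of $\rho(a)$ for $a > l$, working with the nonincreasing representative of the density; the only unmentioned triviality is $\alpha = 1$ or $l = r$, where $\mu([a,b]) = 0$ anyway). What each approach buys: yours is self-contained and needs only the definition of $\mathcal{D}[l,r]$, not its CDF characterization; the paper's avoids the decomposition and any density representative entirely, treats atomic and absolutely continuous parts uniformly, and is the form that is reused verbatim in the proof of \cref{prop:strong_closure_of_D}, where the same chord-slope manipulation yields the Lipschitz estimate for the limit distribution function.
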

\begin{proof}
Let $F_\mu$ be the cumulative distribution function of the measure $\mu$. It follows from \cref{cond:F_mu_is_concave} that $F_\mu$ is concave on $(l, +\infty)$. In particular, $F_\mu$ is continuous on this interval, and therefore $\mu([a, b]) = F_\mu(a) - F_\mu(b)$. By the concavity of $F_\mu$ we get
\[
\frac{F_\mu(b) - F_\mu(a)}{b - a} \le \frac{F_\mu(a) - F_\mu(p)}{a - p}
\]
for all $p \in (l, a)$. Since $F_\mu(a) - F_\mu(p) \le F_\mu(a) \le 1$, we have
\[
F_\mu(b) - F_\mu(a) \le \frac{b - a}{a - p}.
\]
Tending $p$ to $l$, we obtain the desired result.
\end{proof}

\begin{proposition}\label{prop:strong_closure_of_D}
Let  $K$ be a closed segment. Let $\{\mu^{(n)}\}_{n = 1}^{\infty}$ be a sequence of probability measures such that $\mu^{(n)} \in \mathcal{D}[l^{(n)}, r^{(n)}]$ and that $[l^{(n)}, r^{(n)}] \subset K$ for all $n$. Suppose that the sequence $\{\mu^{(n)}\}_{n = 1}^{\infty}$ converges weakly to a measure $\mu$. Then the sequence $\{l^{(n)}\}_{n = 1}^\infty$ is convergent and $\mu \in \Dec{l}{r}$, where $l = \lim\limits_{n \to \infty}l^{(n)}$ and $r = \liminf\limits_{n \to \infty}r^{(n)}$.
\end{proposition}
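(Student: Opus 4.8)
The plan is to verify, for the weak limit $\mu$, the cumulative distribution function characterization of $\mathcal{D}[l,r]$ given by \cref{prop:D_description_dist_function}. Since $K$ is compact, $\mathcal{P}(K)$ is closed in $\mathcal{P}(\R)$ by \cref{cor:support_of_converging_measures}, so $\mu\in\mathcal{P}(K)$ and $l^*:=\min\supp(\mu)$ is well defined, with $F_\mu(t)=0$ for $t\le l^*$ and $F_\mu(t)>0$ for $t>l^*$. The first and more delicate step is to show that the bounded sequence $\{l^{(n)}\}$ converges, in fact to $l^*$; since it is bounded it is enough to prove that every subsequential limit $l'$ of $\{l^{(n)}\}$ equals $l^*$. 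Two facts will do the work: every neighbourhood of $l^*\in\supp(\mu)$ has positive $\mu$-mass; and, by \cref{prop:D_description_dist_function}, each $F_{\mu^{(n)}}$ is concave and nonnegative on $(l^{(n)},\infty)$ and equals $1$ on $(r^{(n)},\infty)$, so the chord estimate for concave functions gives the lower bound $F_{\mu^{(n)}}(x)\ge\frac{x-l^{(n)}}{r^{(n)}-l^{(n)}}$ for $x\in(l^{(n)},r^{(n)})$, where $r^{(n)}-l^{(n)}$ is bounded by the length $M$ of $K$.

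To rule out $l'>l^*$: fixing $\eta\in(0,l'-l^*)$, the point $l^*+\eta$ has $F_\mu(l^*+\eta)>0$, whereas $l^{(n)}>l^*+\eta$ along the subsequence eventually gives $\mu^{(n)}((-\infty,l^*+\eta))=0$, and \cref{st:weak_measure_of_open_set} then forces $F_\mu(l^*+\eta)=0$, a contradiction. To rule out $l'<l^*$: choose $x_0\in(l',l^*)$; only finitely many indices of the subsequence can have $r^{(n)}\le x_0$ (otherwise $\mu^{(n)}((x_0,\infty))=0$ along a further subsequence, forcing $\supp(\mu)\subset(-\infty,x_0]$ and contradicting $l^*\in\supp(\mu)$), so after discarding those the lower bound applies at $x_0$ and yields $\liminf_n F_{\mu^{(n)}}(x_0)\ge\frac{x_0-l'}{M}>0$; evaluating at a continuity point $t_0$ of $F_\mu$ with $x_0<t_0<l^*$ and using monotonicity of the distribution functions together with \cref{cor:convergence_dist_functions} gives $F_\mu(t_0)\ge\liminf_n F_{\mu^{(n)}}(x_0)>0$, contradicting $t_0<l^*$. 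Hence $l'=l^*$, so $l^{(n)}\to l^*=:l$.

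It then remains to show $\mu\in\mathcal{D}[l,r]$ for $r:=\liminf_n r^{(n)}$. I would pass to a subsequence along which $r^{(n)}\to r$ (this keeps $l^{(n)}\to l$ and $\mu^{(n)}\to\mu$) and check the conditions of \cref{prop:D_description_dist_function}. The conditions $F_\mu(l)=0$ and $F_\mu(r+)=1$ merely say $\supp(\mu)\subset[l,r]$ and follow from $\supp(\mu^{(n)})\subset[l^{(n)},r^{(n)}]$, the convergence of the endpoints, and \cref{st:weak_measure_of_closed_set,st:weak_measure_of_open_set}. The concavity of $F_\mu$ on $(l,\infty)$ is the crux and the main obstacle, because \cref{cor:convergence_dist_functions} only controls $F_{\mu^{(n)}}$ at continuity points of $F_\mu$, while the intervals $(l^{(n)},\infty)$ on which concavity of $F_{\mu^{(n)}}$ is known depend on $n$. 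I would get around this by a translation: fix $a<b$ in $(l,\infty)$ and $\theta\in(0,1)$, set $c=\theta a+(1-\theta)b$, and for $s>0$ put $a_s=a-s$, $b_s=b-s$, $c_s=c-s=\theta a_s+(1-\theta)b_s$. Since $F_\mu$ has at most countably many discontinuities, for all but countably many small $s>0$ the points $a_s,b_s,c_s$ are continuity points of $F_\mu$ lying in $(l,\infty)$; for such $s$, once $l^{(n)}<a_s$, concavity of $F_{\mu^{(n)}}$ on $(l^{(n)},\infty)$ gives $F_{\mu^{(n)}}(c_s)\ge\theta F_{\mu^{(n)}}(a_s)+(1-\theta)F_{\mu^{(n)}}(b_s)$, and passing to the limit in $n$ via \cref{cor:convergence_dist_functions} and then letting $s\downarrow 0$ (using left-continuity of $F_\mu$) yields $F_\mu(c)\ge\theta F_\mu(a)+(1-\theta)F_\mu(b)$. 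This establishes the concavity of $F_\mu$ on $(l,\infty)$, so \cref{prop:D_description_dist_function} gives $\mu\in\mathcal{D}[l,r]$, completing the proof.
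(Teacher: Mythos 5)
Your proof is correct, and while it verifies the same criterion as the paper (the cumulative-distribution-function description of $\mathcal{D}[l,r]$ from \cref{prop:D_description_dist_function}), the two hard steps are handled by genuinely different means and in the opposite order. The paper first proves concavity of $F_\mu$ on $(\liminf_n l^{(n)},+\infty)$: it uses the \emph{upper} bound of \cref{lem:mu_ab_bounded_from_above} to show that $F_\mu$ is locally Lipschitz, hence continuous, on that ray, so that by \cref{cor:convergence_dist_functions} the distribution functions converge at \emph{every} point there and concavity passes to the limit; only afterwards does it deduce convergence of $\{l^{(n)}\}$, by noting that if $\liminf l^{(n)}<\limsup l^{(n)}$ then $F_\mu$ would be concave and vanish on a nondegenerate interval, forcing $F_\mu\le 0$. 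You instead prove convergence of $\{l^{(n)}\}$ first and directly, identifying the limit as $\min\supp(\mu)$ via a \emph{lower} chord bound $F_{\mu^{(n)}}(x)\ge (x-l^{(n)})/(r^{(n)}-l^{(n)})$ (a consequence of concavity not used in the paper) together with \cref{st:weak_measure_of_open_set}; and you then obtain concavity of $F_\mu$ without ever proving its continuity, by testing the concavity inequality at translated triples $a_s,b_s,c_s$ that are continuity points of $F_\mu$ (possible since the discontinuities are countable) and letting $s\downarrow 0$ using left-continuity of $F_\mu$. What your route buys is independence from the Lipschitz estimate and from the cited lemma on pointwise limits of convex functions, at the cost of the slightly more delicate bookkeeping with subsequences and exceptional values of $s$; the paper's route yields the extra information that $F_\mu$ is locally Lipschitz on $(l,+\infty)$ as a by-product. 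All the steps you sketch are sound (the chord bound, the portmanteau arguments ruling out $l'\ne\min\supp(\mu)$, the support argument for $F_\mu(l)=0$ and $F_\mu(r+)=1$, and the translation trick), so I see no gap.
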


\begin{proof}
Let $F_\mu^{(n)}$ be the cumulative distribution function of the measure $\mu^{(n)}$, and let $F_\mu$ be a cumulative distribution function of $\mu$. Let $l = \liminf_{n \to \infty}$. Since $\supp(\mu^{(n)}) \subset K$ for all $n$, we conclude that $l^{(n)} \in K$ for all $n$, and therefore $l$ is finite.

Let $[a, b]$ be a segment contained in $(l, +\infty)$. Since $l = \liminf_{n \to \infty}l^{(n)}$, there exists a subsequence $\{\mu^{(k_n)}\} \subset \{\mu^{(n)}\}$ such that $l^{(k_n)} < (l + a) / 2$ for all $n$. It follows from \cref{lem:mu_ab_bounded_from_above} that \[
\mu^{(k_n)}([a, b]) 
\le \frac{b - a}{a - l^{(k_n)}} \le 2\frac{b - a}{a - l}\] for all $k_n$. Hence, using \cref{st:weak_measure_of_open_set} we conclude that
\[
F_\mu(b) - F_\mu(a+) = \mu((a, b)) \le \liminf_{n \to \infty} \mu^{(k_n)}((a, b)) \le \liminf_{n \to \infty}\mu^{(k_n)}([a, b]) \le  2\frac{b - a}{a - l}.
\]

Let $c$ be an arbitrary point on $(l, a)$. We have $F_\mu(c+) \le F_\mu(a)$, and therefore
\[
F_\mu(b) - F_\mu(a) \le F_\mu(b) - F_\mu(c+) \le 2\frac{b - c}{c - l}.
\]
Tending $c$ to $a$, we obtain
\begin{equation} \label{eq:F_mu_is_Lipschitz}
F_\mu(b) - F_\mu(a) \le 2\frac{b - a}{a - l}.
\end{equation}

Let $x$ and $y$ be arbitrary points on the segment $[a, b]$. Applying inequality \cref{eq:F_mu_is_Lipschitz} to the points $x$ and $y$, we conclude that
\[
|F_\mu(x) - F_\mu(y)| \le \frac{2}{\min(x, y) - l}|x - y| \le \frac{2}{a - l}|x - y|.
\]
Thus, $F_\mu$ is Lipschitz continuous on the segment $[a, b]$, and therefore is continuous at all points of $(a, b)$.

Taking into account \cref{cor:convergence_dist_functions}, we conclude that the sequence of functions $\{F_\mu^{(n)}\}$ converges pointwise to $F_\mu$ on the interval $(a, b)$. In particular, the subsequence $\{F_\mu^{(k_n)}\}$ converges pointwise to $F_\mu$ on the interval $(a, b)$, where $l^{(k_n)} < (l + a) / 2 < a$ for all $n$. By \cref{prop:D_description_dist_function}, the function $F_\mu^{(k_n)}$ is concave on $(a, b)$ for all $n$. Take into account the following statement.
\begin{lemma*}[{{\cite[Corollary 1.3.8]{convex_functions}}}]
If $f^{(n)} \colon I \to \R$ $(n \in \mathbb{N})$ is a pointwise converging sequence of convex functions, then its limit $f$ is also convex. 
\end{lemma*}
\noindent Using this lemma, we conclude that $F_\mu$ is concave on $(a, b)$ as the poinwise limit of concave function. Since $[a, b]$ is an arbitrary subsegment of $(l, +\infty)$, the function $F_\mu$ is continuous and concave on $(l, +\infty)$.

Let $l' = \limsup_{n \to \infty}l^{(n)}$. For any $\eps > 0$ there exists a subsequence $\{\mu^{(k_n)}\} \subset \{\mu^{(k)}\}$ such that $l^{(k_n)} > l' - \eps$ for all $n$. This means that $\supp(\mu^{(k_n)}) \subset [l' - \eps, +\infty)$ for all $n$. Using \cref{cor:support_of_converging_measures}, we conclude that $\supp(\mu) \subset [l' - \eps, +\infty)$ for all $\eps > 0$. Thus, $\supp(\mu) \subset [l', +\infty)$, and therefore $F_\mu(x) = 0$ for all $x \le l'$.

By construction, $l \le l'$. Suppose that $l < l'$. Then the function $F_\mu$ is concave on $(l, +\infty)$ and is equal to zero on $[l, l']$. Hence, $F_\mu(x) \le 0$ for all $x \ge l$, and this is impossible since $\mu$ is a probability measure.

This contradiction proves that \[\limsup_{n \to \infty}l^{(n)} = l' = l = 
\liminf_{n \to \infty}l^{(n)}.\]
Hence, the sequence $\{l^{(n)}\}$ is convergent and $\lim_{n \to \infty}l^{(n)} = l$. Furthermore, $F_\mu(x) = 0$ for all $x \le l$ and $F_\mu$ is concave on $(l, +\infty)$.

Finally, let $r = \liminf_{n \to \infty}r^{(n)}$. For any $\eps > 0$ there exists a subsequence $\{\mu^{(k_n)}\} \subset \{\mu^{(n)}\}$ such that $r^{(k_n)} < r + \eps$ for all $k_n$. This means that $\supp(\mu^{(k_n)}) \subset (-\infty, r + \eps]$, and therefore by \cref{cor:support_of_converging_measures} we have $\supp(\mu) \subset (-\infty, r + \eps]$ for all $\eps > 0$. Hence, $\supp(\mu) \subset (-\infty, r]$, and therefore $F_\mu(r+) = 1$. Thus, all the conditions of \cref{prop:D_description_dist_function} holds and we conclude that $\mu \in \mathcal{D}[l, r]$.
\end{proof}

\begin{corollary}
The set $\mathcal{D}[l, r]$ is a compact convex subset of $\mathcal{P}[l, r]$, and if $l < r$, then $\mathcal{D}[l, r]$ coincides with the closure of $\mathcal{D}_{AC}[l, r]$.
\end{corollary}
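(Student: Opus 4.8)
The plan is to prove the three assertions in order: convexity of $\mathcal{D}[l,r]$, its compactness, and—when $l<r$—its identification with the closure of $\mathcal{D}_{AC}[l,r]$. The case $l=r$ is immediate, since then $\mathcal{D}[l,l]=\{\delta(l)\}$ is a single point, so I assume $l<r$ for what follows (except where convexity and compactness are concerned, where the degenerate case is trivial anyway). For convexity I would argue through the cumulative distribution function characterization of \cref{prop:D_description_dist_function}: a probability measure $\mu$ on $\R$ lies in $\mathcal{D}[l,r]$ precisely when $F_\mu(l)=0$, $F_\mu(r+)=1$, and $F_\mu$ is concave on $(l,+\infty)$. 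Given $\mu_1,\mu_2\in\mathcal{D}[l,r]$ and $t\in[0,1]$, one has $F_{t\mu_1+(1-t)\mu_2}=tF_{\mu_1}+(1-t)F_{\mu_2}$; the boundary conditions in \cref{cond:supp_mu_on_lr} are linear in $F$, and concavity on $(l,+\infty)$ (\cref{cond:F_mu_is_concave}) is preserved under convex combinations, so $t\mu_1+(1-t)\mu_2\in\mathcal{D}[l,r]$. (Alternatively one can reparametrize directly: if $\mu_i=\alpha_i\delta(l)+(1-\alpha_i)\mu_i'$ with $\mu_i'\in\mathcal{D}_{AC}[l,r]$, set $\alpha=t\alpha_1+(1-t)\alpha_2$ and write the remaining mass as a convex combination of $\mu_1'$ and $\mu_2'$, using convexity of $\mathcal{D}_{AC}[l,r]$.)

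For compactness I would first recall that since $[l,r]$ is compact, $\mathcal{P}[l,r]$ is a compact convex subset of $\mathcal{M}(\R)$ by the Prokhorov theorem, and that $\mathcal{M}^+([l,r])$ is metrizable; hence it suffices to show $\mathcal{D}[l,r]$ is sequentially closed in $\mathcal{P}[l,r]$. This is precisely \cref{prop:strong_closure_of_D} applied with $K=[l,r]$ and the constant sequences $l^{(n)}\equiv l$, $r^{(n)}\equiv r$: if $\mu^{(n)}\in\mathcal{D}[l,r]$ converges weakly to $\mu$, then $\mu\in\mathcal{D}[\,\lim_n l^{(n)},\ \liminf_n r^{(n)}\,]=\mathcal{D}[l,r]$. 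Thus $\mathcal{D}[l,r]$ is a closed subset of the compact set $\mathcal{P}[l,r]$, hence compact; together with the previous paragraph it is a compact convex subset of $\mathcal{P}[l,r]$.

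For the closure statement with $l<r$: since $\mathcal{D}_{AC}[l,r]\subset\mathcal{D}[l,r]$ and $\mathcal{D}[l,r]$ is closed by the preceding step, we get $\overline{\mathcal{D}_{AC}[l,r]}\subseteq\mathcal{D}[l,r]$. For the reverse inclusion, take any $\mu=\alpha\delta(l)+(1-\alpha)\mu'\in\mathcal{D}[l,r]$ with $\mu'\in\mathcal{D}_{AC}[l,r]$ and $0\le\alpha\le1$. For all $n$ large enough that $l+1/n\le r$, the measure $\lambda[l,l+1/n]$ has the nonincreasing density equal to $n$ on $[l,l+1/n]$ and $0$ elsewhere, so $\lambda[l,l+1/n]\in\mathcal{D}_{AC}[l,r]$; by convexity of $\mathcal{D}_{AC}[l,r]$ the measure $\mu_n:=\alpha\lambda[l,l+1/n]+(1-\alpha)\mu'$ also lies in $\mathcal{D}_{AC}[l,r]$. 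As already noted in the excerpt (and as follows from \cref{cor:convergence_dist_functions}), $\lambda[l,l+1/n]\to\delta(l)$ weakly, hence $\mu_n\to\mu$ weakly, so $\mu\in\overline{\mathcal{D}_{AC}[l,r]}$. This gives $\mathcal{D}[l,r]=\overline{\mathcal{D}_{AC}[l,r]}$.

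I do not anticipate a genuine obstacle: the substantive content has already been packaged into \cref{prop:D_description_dist_function} and \cref{prop:strong_closure_of_D}. The only points that need a moment of care are the reduction of topological closedness to sequential closedness (legitimate because $\mathcal{M}^+([l,r])$ is metrizable), the degenerate cases $l=r$ and $\alpha\in\{0,1\}$, and checking that the approximants $\lambda[l,l+1/n]$ are genuinely admissible elements of $\mathcal{D}_{AC}[l,r]$—all of which are straightforward.
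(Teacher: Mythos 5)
Your proof is correct and follows essentially the same route as the paper: closedness of $\mathcal{D}[l,r]$ via \cref{prop:strong_closure_of_D} (with constant boundary sequences), compactness as a closed subset of the compact set $\mathcal{P}[l,r]$, and the closure identification built on the fact that $\lambda[l,l+1/n]\to\delta(l)$ weakly. The only cosmetic difference is that you exhibit the explicit approximating sequence $\alpha\lambda[l,l+1/n]+(1-\alpha)\mu'$, whereas the paper reaches the same inclusion $\mathcal{D}[l,r]\subset\overline{\mathcal{D}_{AC}[l,r]}$ by noting that the closure of the convex set $\mathcal{D}_{AC}[l,r]$ is convex and contains $\delta(l)$.
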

\begin{proof}
It trivially follows from \cref{prop:strong_closure_of_D} that $\mathcal{D}[l, r]$ is closed. Since $\mathcal{D}[l, r] \subset \mathcal{P}[l, r]$ and $\mathcal{P}[l, r]$ is compact, we conclude that $\mathcal{D}[l, r]$ is compact too. The convexity of $\mathcal{D}[l, r]$ trivially follows from the definition. 

Assume that $l < r$, and let $\overline{D}$ be the closure of $\mathcal{D}_{AC}[l, r]$. As we have already shown, the measure $\delta(l)$ is contained in $\overline{D}$. Since $\mathcal{D}_{AC}[l, r]$ is convex, the set $\overline{D}$ is also convex, and therefore the measure $\alpha \delta(l) + (1 - \alpha)\mu$ is contained in $\overline{D}$ for all $\mu \in \mathcal{D}[l, r]$ and for all $0 \le \alpha \le 1$. Thus, $\mathcal{D}[l, r] \subset \overline{D}$, and therefore, since $\mathcal{D}[l, r]$ is closed, we conclude that $\mathcal{D}[l, r] = \overline{D}$.
\end{proof}

\subsection{Extreme points of \texorpdfstring{$\mathcal{D}[l, r]$}{D[l, r]} and homeomorphism between \texorpdfstring{$\mathcal{D}[l, r]$}{D[l, r]} and \texorpdfstring{$\mathcal{P}[l, r]$}{P[l, r]}}
Let us recall to the reader the Krein-Milman theorem.
\begin{theorem*}[{{\cite{KM40}}}]
If $X$ a compact convex subset of a locally convex vector space, then $X$ is the closed convex hull of its extreme points. 
\end{theorem*}
Let us introduce the following common notation. If $A$ is a subset of a locally convex vector space, we denote by $\overline{\mathrm{co}}\,A$ the closed convex hull of $A$. In addition, if $A$ is convex, we denote by $\mathrm{ex}\,A$ the set of extreme points of $A$.

The set $\mathcal{D}[l, r]$ is a compact convex subset of the locally convex vector space $\mathcal{M}[l, r]$, and therefore it is fully described by its extreme points.

\begin{proposition}
The measure $\mu$ is an extreme point of $\mathcal{D}[l, r]$ if and only if $\mu = \lambda[l, t]$ for some $t \in [l, r]$.
\end{proposition}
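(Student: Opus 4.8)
The plan is to identify $\mathcal{D}[l,r]$ with the simplex $\mathcal{P}[l,r]$ through an affine bijection that carries each Dirac mass $\delta_t$ to $\lambda[l,t]$. Define the barycenter map $B\colon\mathcal{P}[l,r]\to\mathcal{M}[l,r]$ by $B(\tau)=\int_{[l,r]}\lambda[l,t]\,\tau(dt)$, meaning $\int\varphi\,dB(\tau)=\int_{[l,r]}\bigl(\int\varphi\,d\lambda[l,t]\bigr)\,\tau(dt)$ for every $\varphi\in C_b(\R)$. The claim I would prove is that $B$ is an affine bijection of $\mathcal{P}[l,r]$ onto $\mathcal{D}[l,r]$; it is in fact a homeomorphism, being a continuous bijection from a compact space onto a Hausdorff space, though for the proposition affinity and bijectivity are enough. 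Granting this, the proposition is immediate: an affine bijection restricts to a bijection between the sets of extreme points, the extreme points of $\mathcal{P}[l,r]$ are precisely the Dirac measures $\delta_t$, $t\in[l,r]$, and $B(\delta_t)=\lambda[l,t]$; hence $\mathrm{ex}\,\mathcal{D}[l,r]=\{\lambda[l,t]:t\in[l,r]\}$.

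The work therefore lies in the three properties of $B$. Affinity is clear. To see that $B(\tau)\in\mathcal{D}[l,r]$, observe that the cumulative distribution function of $B(\tau)$ is $F_{B(\tau)}=\int_{[l,r]}F_{\lambda[l,t]}\,\tau(dt)$; every $F_{\lambda[l,t]}$ meets the conditions of \cref{prop:D_description_dist_function} (value $0$ at $l$, value $1$ to the right of $r$, concave on $(l,+\infty)$), hence so does the average $F_{B(\tau)}$, and \cref{prop:D_description_dist_function} then gives $B(\tau)\in\mathcal{D}[l,r]$. Continuity of $B$ holds because $t\mapsto\int\varphi\,d\lambda[l,t]$ is bounded and continuous for each $\varphi\in C_b(\R)$.

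For bijectivity I would exhibit the inverse explicitly. Given $\mu\in\mathcal{D}[l,r]$, write $\mu=\alpha\delta(l)+(1-\alpha)\rho\,dx$ with $\rho$ nonincreasing on $[l,r]$ and $\int\rho=1$, as in the definition of $\mathcal{D}[l,r]$. Extending $\rho$ by $0$ to the right of $r$, the monotone function $\rho$ is the tail function of a unique $\sigma$-finite nonnegative Borel measure $\nu$, carried by $(l,r]$, with $\rho(x)=\nu\bigl((x,+\infty)\bigr)$ for a.e. $x$; Tonelli's theorem gives $\int_{(l,+\infty)}(t-l)\,\nu(dt)=\int_l^{+\infty}\rho=1$. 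Put $\tau=\alpha\,\delta_l+(1-\alpha)\,(t-l)\,\nu(dt)$, a probability measure on $[l,r]$. A second application of Tonelli shows that $B(\tau)$ has atom $\alpha$ at $l$ and density $\rho$ on $(l,+\infty)$, i.e. $B(\tau)=\mu$, so $B$ is surjective; and since $\alpha=\mu(\{l\})$ and $\nu$ — hence $\tau$ — are recovered from $\mu$, the measure $\nu$ being $(1-\alpha)^{-1}$ times the tail measure of $\partial_+F_\mu$, the preimage is unique, so $B$ is also injective.

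The soft parts here are the reduction via \cref{prop:D_description_dist_function} and the compactness-Hausdorff bookkeeping; the point that needs care is the surjectivity step, namely writing an arbitrary monotone density — possibly unbounded at $l$, possibly with a jump of $\rho$ at $r$ (an atom of $\nu$ there) — as a barycenter of the measures $\lambda[l,t]$. I expect that to be the main obstacle, and it is handled by the standard correspondence between nonincreasing functions and their tail measures together with Tonelli's theorem.
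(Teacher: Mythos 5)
Your argument is correct, but it takes a genuinely different route from the paper. The paper proves the statement directly at the level of densities: given an extreme $\mu=\alpha\delta(l)+(1-\alpha)\rho\,dx$, it splits $\rho$ at the level $\rho(t)$ into $\min\{\rho(t),\rho\}$ and $\max\{\rho-\rho(t),0\}$ and uses extremality to force $\rho$ to be constant on its support, and it checks by hand that each $\lambda[l,t]$ is extreme; the identification of $\mathcal{D}[l,r]$ with $\mathcal{P}[l,r]$ (via the operator $T^*$ of \cref{lem:T_operator_is_homeomorhic_embedding}, which is exactly your barycenter map $B$, since $\langle T^*\tau,\varphi\rangle=\int\bigl(\frac{1}{t-l}\int_l^t\varphi\bigr)\tau(dt)=\int\bigl(\int\varphi\,d\lambda[l,t]\bigr)\tau(dt)$) appears only \emph{after} this proposition, and its surjectivity onto $\mathcal{D}[l,r]$ is deduced from the proposition through \cref{cor:D_lr_is_closed_convex_hull} and Krein--Milman, so the paper could not invoke it here without circularity. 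You instead establish the bijection first, proving surjectivity by an explicit inverse — representing a nonincreasing density as the tail of a measure $\nu$ and taking $\tau=\alpha\delta_l+(1-\alpha)(t-l)\nu(dt)$ — and then read off the extreme points from $\mathrm{ex}\,\mathcal{P}[l,r]=\{\delta_t\}$ via the purely affine fact that an affine bijection preserves extreme points (continuity is indeed not needed for that). What each approach buys: the paper's argument is elementary and self-contained at this stage, needing only the definition of $\mathcal{D}[l,r]$; yours requires the measure-theoretic bookkeeping (Tonelli, tail measures, a.e.\ identification of densities, recovering the atom at $l$), but it yields the stronger conclusion — the affine homeomorphism $\mathcal{P}[l,r]\cong\mathcal{D}[l,r]$ — without Krein--Milman and without the paper's detour through density of $\mathrm{Im}(T)$, effectively subsuming the paper's later proposition on $T^*$. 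The only step you should spell out a little more is injectivity: for an \emph{arbitrary} $\tau$ with $B(\tau)=\mu$ you must note that the mixture over $(l,r]$ is absolutely continuous (so the atom of $\tau$ at $l$ equals $\mu(\{l\})$) and that its density is the tail function of $\tau'(dt)/(t-l)$, which, being known a.e.\ and one-sidedly continuous, determines the measure; this is implicit in your "recovered from $\mu$" remark and is easily completed.
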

\begin{proof}
If $l = r$, there is nothing to prove. So, we may assume that $l < r$.

Let $\mu$ be an extreme point of $\mathcal{D}[l, r]$. By definition, $\mu$ can be represented in the form $\alpha \delta(l) + (1 - \alpha)\mu'$, where $0 \le \alpha \le 1$ and $\mu' \in \mathcal{D}_{AC}[l, r]$. Since $\mu' \ne \delta(l)$ and $\mu \in \mathrm{ex}\,\mathcal{D}[l, r]$, we conclude that either $\mu = \delta(l) = \lambda[l, l]$ or $\mu \in \mathcal{D}_{AC}[l, r]$.

Assume that $\mu \in \mathcal{D}_{AC}[l, r]$. By definition, there exists a nonnegative nonincreasing density function $\rho$ defined on the segment $[l, r]$ such that $\mu = \rho\,dx$. Let $[l, s]$ be the support of $\mu$, and let $t$ be an arbitrary point on the interval $(l, s)$. Since $\supp(\mu) = [l, s]$, we conclude that $\rho(t) > 0$.

Consider the functions $\rho^{(1)}(x) = \min\{\rho(t), \rho(x)\}$ and $\rho^{(2)}(x) = \max\{\rho(x) - \rho(t), 0\}$. One can easily verify that both $\rho^{(1)}$ and $\rho^{(2)}$ are nonnegative nonincreasing functions defined on the segment $[l, r]$, and that the equation $\rho(x) = \rho^{(1)}(x) + \rho^{(2)}(x)$ holds for all $x \in [l, r]$.

Denote 
\[\alpha = \int_l^r\rho^{(1)}(x)\,dx = 1 - \int_l^r\rho^{(2)}(x)\,dx.\]
We have $0 \le \alpha \le 1$. Since $\rho^{(1)}(x) = \rho(t)$ for all $x \in [l, t]$, we conclude that $\alpha \ge \rho(t)(t - l) > 0$. If $\alpha = 1$, then $\rho(x) = \rho^{(1)}(x)$ for almost all $x \in [l, r]$, and in particular $\rho(x) = \rho(t)$ for almost all $x \in [l, t]$.

Suppose otherwise that $\alpha < 1$. Then $\mu = \alpha \mu^{(1)} + (1 - \alpha)\mu^{(2)}$, where $\mu^{(1)} = \rho^{(1)}(x)\,dx / \alpha$ and $\mu^{(2)} = \rho^{(2)}(x)\,dx / (1 - \alpha)$. Both measures $\mu^{(1)}$ and $\mu^{(2)}$ are contained in $\mathcal{D}_{AC}[l, r]$, and therefore, since $\mu \in \mathrm{ex}\,\mathcal{D}[l, r]$, we conclude that $\mu = \mu^{(1)} = \mu^{(2)}$. In particular, $\rho(x) = \rho^{(1)}(x) / \alpha$ for almost all $x \in [l, r]$, and therefore $\rho(x) = \rho(t) / \alpha$ for almost all $x \in [l, t]$.

In both cases we conclude that $\rho(x)$ is constant on $(l, t)$ for all $t \in (l, s)$. Hence, $\rho(x)$ is constant on $(l, s)$. In addition, since $\supp(\mu) = [l, s]$, we have $\rho(x) = 0$ for all $x > s$. Hence, $\mu$ is proportional to the restriction of the Lebesgue measure to the segment $[l, s]$, and therefore, since $\mu$ is a probability measure, we get $\mu = \lambda[l, s]$.

Thus, we conclude that if $\mu \in \mathrm{ex}\,\mathcal{D}[l, r]$, then $\mu = \lambda[l, t]$ for some $t \in [l, r]$. Let us prove the sufficiency of this condition. If $t = l$, then by definition $\lambda[l, l] = \delta(l) \in \mathcal{D}[l, r]$. Moreover, since $\delta(l)$ is an extreme point of $\mathcal{P}[l, r]$ and $\mathcal{D}[l, r] \subset \mathcal{P}[l, r]$, the measure $\delta(l)$ is also an extreme point of $\mathcal{D}[l, r]$.

We may assume that $l < t$. We have $\lambda[l, t] \in \mathcal{D}_{AC}[l, r] \subset \mathcal{D}[l, r]$. Suppose that $\lambda[l, t] = \alpha\mu^{(1)} + (1 - \alpha)\mu^{(2)}$ for some real number $\alpha \in (0, 1)$ and probability measures $\mu^{(1)}, \mu^{(2)} \in \mathcal{D}[l, r]$. The measure $\lambda[l, t]$ is absolutely continuous, and therefore both $\mu^{(1)}$ and $\mu^{(2)}$ are also absolutely continuous. Thus, $\mu^{(1)}, \mu^{(2)} \in \mathcal{D}_{AC}[l, r]$, and therefore $\mu^{(1)} = \rho^{(1)}\,dx$ and $\mu^{(2)} = \rho^{(2)}\,dx$ for some nonnegative nonincreasing functions $\rho^{(1)}$ and $\rho^{(2)}$. 

Since $\supp(\lambda[l, t]) = [l, t]$, we conclude that $\supp(\mu^{(1)}) \subset [l, t]$ and $\supp(\mu^{(2)}) \subset [l, t]$; in particular, $\rho^{(1)}(x) = \rho^{(2)}(x) = 0$ for all $x > t$. Since $\rho^{(1)}$ and $
\rho^{(2)}$ is nonincreasing on $[l, t]$ and their nontrivial convex combination $\alpha\rho^{(1)}(x) + (1 - \alpha)\rho^{(2)}(x)$ is equal to the constant function $1 / (t - l)$ for almost all $x \in [l, t]$, we conclude that both functions $\rho^{(1)}$ and $\rho^{(2)}$ are constant on $(l, t)$. Thus, both $\mu^{(1)}$ and $\mu^{(2)}$ are proportional to $\lambda[l, t]$, and therefore $\mu^{(1)} = \mu^{(2)} = \lambda[l, t]$. This proves that $\lambda[l, t] \in \mathrm{ex}\,\mathcal{D}[l, r]$.
\end{proof}

\begin{corollary} \label{cor:D_lr_is_closed_convex_hull}
$\mathcal{D}[l, r] = \overline{\mathrm{co}}\{\lambda[l, t] \colon l \le t \le r\}$.
\end{corollary}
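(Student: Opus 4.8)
The statement is an immediate corollary of the Krein--Milman theorem together with the description of $\mathrm{ex}\,\mathcal{D}[l, r]$ obtained in the preceding proposition, so the plan is simply to assemble these two ingredients. First I would dispose of the degenerate case $l = r$: then $\mathcal{D}[l, l] = \{\delta(l)\} = \{\lambda[l, l]\}$ and the identity $\mathcal{D}[l, l] = \overline{\mathrm{co}}\{\lambda[l, t] \colon l \le t \le l\}$ holds trivially. So assume $l < r$.

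For the main case, recall from the corollary above that $\mathcal{D}[l, r]$ is a compact convex subset of $\mathcal{M}[l, r]$, which is a locally convex topological vector space in the weak topology. Hence the hypotheses of the Krein--Milman theorem are satisfied, and we obtain $\mathcal{D}[l, r] = \overline{\mathrm{co}}\,(\mathrm{ex}\,\mathcal{D}[l, r])$. By the preceding proposition, $\mathrm{ex}\,\mathcal{D}[l, r] = \{\lambda[l, t] \colon l \le t \le r\}$. Substituting this equality into the previous display yields $\mathcal{D}[l, r] = \overline{\mathrm{co}}\{\lambda[l, t] \colon l \le t \le r\}$, as claimed. There is no real obstacle here: everything needed (compactness and convexity of $\mathcal{D}[l, r]$, and the identification of its extreme points) has already been established, and the only work is to invoke Krein--Milman and quote the proposition.
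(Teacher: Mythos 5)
Your proposal is correct and follows exactly the route the paper intends: the corollary is deduced by applying the Krein--Milman theorem to the compact convex set $\mathcal{D}[l, r] \subset \mathcal{M}[l, r]$ and substituting the description $\mathrm{ex}\,\mathcal{D}[l, r] = \{\lambda[l, t] \colon l \le t \le r\}$ from the preceding proposition. Your separate treatment of the degenerate case $l = r$ is harmless but not needed, since the proposition and the compactness statement cover that case as well.
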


We are going to present the natural linear homeomorphism between $\mathcal{P}[l, r]$ and $\mathcal{D}[l, r]$. By the Riesz-Markov-Kakutani representation theorem, the space $\mathcal{M}[l, r]$ is the topological dual of $C[l, r]$, and the weak topology on $\mathcal{M}[l, r]$ coincides with the weak$^*$ topology $\sigma(\mathcal{M}[l, r], C[l, r])$. In particular, any linear continuous operator $T \colon C[l, r] \to C[l, r]$ provides the dual linear operator $T^*\colon \mathcal{M}[l, r] \to \mathcal{M}[l, r]$, which is continuous in the weak topology.
\begin{lemma}\label{lem:T_operator_is_homeomorhic_embedding}
Consider the linear operator $T \colon C[l, r] \to C[l, r]$ defined as follows:
\begin{align*}
    T(\varphi)(t) = \begin{dcases}
    \frac{\int_l^t \varphi(x)\,dx}{t - l} &\text{if $l < t \le r$},\\
    \varphi(l) &\text{if $t = l$}.
    \end{dcases}
\end{align*}
Then $T$ is a continuous linear operator and the dual linear operator $T^*\colon \mathcal{M}[l, r] \to \mathcal{M}[l, r]$ provides a linear homeomorphic embedding from $\mathcal{P}[l, r]$ onto itself.
\end{lemma}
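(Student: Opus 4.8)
The plan is to verify four things in order: (i) $T$ is a well-defined bounded linear operator on $C[l,r]$; (ii) $T$ is positive and $T(\mathbf 1)=\mathbf 1$, so that $T^*$ maps $\mathcal P[l,r]$ into itself; (iii) $T$ has dense range, so that $T^*$ is injective; and (iv) compactness of $\mathcal P[l,r]$ then turns the continuous injection $T^*|_{\mathcal P[l,r]}$ into a homeomorphism onto its image. For (i): linearity is immediate, and for $l<t\le r$ the number $T(\varphi)(t)$ is an average of $\varphi$ over $[l,t]$, so $|T(\varphi)(t)|\le\|\varphi\|_\infty$, and also $|T(\varphi)(l)|=|\varphi(l)|\le\|\varphi\|_\infty$, giving $\|T\varphi\|_\infty\le\|\varphi\|_\infty$. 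The only subtlety is continuity of $T\varphi$ at the left endpoint: since $\varphi$ is continuous at $l$, $\frac{1}{t-l}\int_l^t\varphi(x)\,dx\to\varphi(l)$ as $t\to l^+$, which matches the prescribed value $T(\varphi)(l)$, so $T\varphi\in C[l,r]$. As the adjoint of a bounded operator, $T^*\colon\mathcal M[l,r]\to\mathcal M[l,r]$ is automatically continuous for the weak topology $\sigma(\mathcal M[l,r],C[l,r])$.

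For (ii): if $\varphi\ge0$ then each average $T(\varphi)(t)$ is nonnegative, so $T$ is positive, and clearly $T(\mathbf 1)=\mathbf 1$. Hence for $\mu\in\mathcal P[l,r]$ the measure $T^*\mu\in\mathcal M[l,r]$ is nonnegative with total mass $\langle T^*\mu,\mathbf 1\rangle=\langle\mu,T\mathbf 1\rangle=1$, so $T^*\mu\in\mathcal P[l,r]$. (A direct computation gives $T^*\mu=\int_{[l,r]}\lambda[l,t]\,\mu(dt)$, which by \cref{cor:D_lr_is_closed_convex_hull} lies in $\mathcal D[l,r]$; I would record this, since it is presumably what is used afterwards to describe the image of $T^*$.) For (iii), I would show every polynomial lies in the range of $T$: solving $T\varphi=\psi$ for $l<t\le r$ forces $\int_l^t\varphi(x)\,dx=(t-l)\psi(t)$, and for a polynomial $\psi$ the function $\varphi(t)=\psi(t)+(t-l)\psi'(t)$ is again a polynomial with $\varphi(l)=\psi(l)$, hence lies in $C[l,r]$, and integration by parts confirms $T\varphi=\psi$. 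By the Weierstrass approximation theorem the range of $T$ is then dense in $C[l,r]$, so $T^*\mu=T^*\nu$ forces $\langle\mu,g\rangle=\langle\nu,g\rangle$ for $g$ in a dense subset of $C[l,r]$, hence for all $g\in C[l,r]$, i.e.\ $\mu=\nu$; thus $T^*$ is injective.

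For (iv): $\mathcal P[l,r]$ is compact (as recalled before the lemma) and $\mathcal M[l,r]$ with the weak topology is Hausdorff, so the continuous bijection $T^*\colon\mathcal P[l,r]\to T^*(\mathcal P[l,r])$ is a homeomorphism onto its image, and since $T^*$ is linear this is the asserted linear homeomorphic embedding. None of the steps is genuinely hard; the one requiring the most care is (iii) --- one must check that the explicit polynomial preimage $\psi+(t-l)\psi'$ really is continuous at $t=l$ and really satisfies $T\varphi=\psi$, and that ``dense range implies injective adjoint'' is invoked in the correct duality.
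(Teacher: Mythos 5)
Your proof is correct and follows essentially the same route as the paper: boundedness and continuity of $T$ at $t=l$, positivity together with $T(\mathbf 1)=\mathbf 1$ to preserve $\mathcal P[l,r]$, dense range (via the preimage $\varphi=\psi+(t-l)\psi'$, which is exactly the paper's $\varphi=\frac{d}{dx}\bigl((x-l)\psi(x)\bigr)$) to get injectivity of $T^*$, and compactness of $\mathcal P[l,r]$ to upgrade the continuous injection to a homeomorphism onto its image. The only cosmetic difference is that you use polynomials and Weierstrass where the paper uses $C^1[l,r]$ for the density argument.
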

\begin{proof}
If $\varphi$ is a continuous function, then the function $T(\varphi)$ is also continuous. Indeed, it is enough to verify the continuity only at the point $t = l$. Since $\varphi$ is continuous at the point $l$, we have $\varphi(t) = \varphi(l) + o(1)$ as $t \to l$. Hence, $\int_l^t\varphi(x)\,dx = \varphi(l)(t - l) + o(t - l)$, and therefore $T(\varphi)(t) = \varphi(l) + o(1)$. Thus, $T(\varphi)(t) \to  T(\varphi)(l)$ as $t \to l$.

This means that $\mathrm{Im}(T) \subset C[l, r]$. In addition, we trivially have $\norm{T} \le 1$. Thus, the linear operator $T$ is bounded, and therefore is continuous.

Let us verify that $T^*$ is injective. Equivalently, we need to prove that $\mathrm{Im}(T)$ is dense in $C[l, r]$. Consider a function $\psi \in C^1[l, r]$ and denote
\[
\varphi(x) = \frac{d}{dx}((x - l) \cdot \psi(x)).
\]
We have $\varphi \in C[l, r]$. In addition, $\int_l^t \varphi(x)\,dx = (t - l)\cdot \psi(t)$, and therefore $T(\varphi)(t) = \psi(t)$ for all $t \in (l, r]$. Hence, $T(\varphi) = \psi$, and we conclude that $C^1[l, r] \subset \mathrm{Im}(T)$. Thus, $\mathrm{Im}(T)$ is dense in $C[l, r]$ and $T^*$ is injective.

If a function $\varphi$ is nonnegative, then $T(\varphi)$ is also nonnegative. Hence, if $\mu$ is a positive linear functional on $C[l, r]$, then $T^*(\mu)$ is also a positive linear functional, and therefore $T^*$ maps $\mathcal{M}^+[l, r]$ onto itself. Finally, since $T(1) = 1$, we conclude that the image of an arbitrary probability measure is also a probability measure. Thus, since $\mathcal{P}[l, r]$ is a compact set, it follows from the closed map lemma that $T^*$ is a linear homeomorphic embedding from $\mathcal{P}[l, r]$ onto itself.
\end{proof}
\begin{proposition}
The linear operator $T^*$ defined in \cref{lem:T_operator_is_homeomorhic_embedding} provides a homeomorphism between $\mathcal{P}[l, r]$ and $\mathcal{D}[l, r]$, which maps $\delta(t)$ to $\lambda[l, t]$ for all $t \in [l, r]$.
\end{proposition}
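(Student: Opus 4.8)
The plan is to piece the proposition together from \cref{lem:T_operator_is_homeomorhic_embedding} and the extreme-point description \cref{cor:D_lr_is_closed_convex_hull} of $\mathcal{D}[l,r]$; essentially no new analytic estimate is needed, and the argument is bookkeeping with the Krein--Milman theorem.

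First I would compute the image of a Dirac mass under $T^*$. By the definition of the dual operator, for every $\varphi \in C[l,r]$ and every $t \in (l,r]$ one has $\int \varphi\, d(T^*\delta(t)) = T(\varphi)(t) = \frac{1}{t-l}\int_l^t \varphi(x)\,dx = \int \varphi\, d\lambda[l,t]$, and for $t=l$ one has $\int\varphi\, d(T^*\delta(l)) = \varphi(l) = \int\varphi\, d\lambda[l,l]$. Since an element of $\mathcal{M}[l,r] = C[l,r]^*$ is determined by its values on $C[l,r]$, this yields $T^*(\delta(t)) = \lambda[l,t]$ for all $t \in [l,r]$, which is already the last assertion of the proposition.

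Next I would identify the image $T^*(\mathcal{P}[l,r])$. By \cref{lem:T_operator_is_homeomorhic_embedding} the restriction $T^*\colon \mathcal{P}[l,r]\to \mathcal{M}[l,r]$ is a linear homeomorphic embedding, so $K := T^*(\mathcal{P}[l,r])$ is a compact convex subset of $\mathcal{M}[l,r]$ (continuous linear image of the compact convex set $\mathcal{P}[l,r]$) and $T^*$ is an affine homeomorphism of $\mathcal{P}[l,r]$ onto $K$. An affine homeomorphism between convex sets carries extreme points bijectively onto extreme points, and $\mathrm{ex}\,\mathcal{P}[l,r] = \{\delta(t)\colon l\le t\le r\}$; hence $\mathrm{ex}\,K = \{\lambda[l,t]\colon l\le t\le r\}$ by the previous step. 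Applying the Krein--Milman theorem to $K$ and then \cref{cor:D_lr_is_closed_convex_hull} gives $K = \overline{\mathrm{co}}\,\mathrm{ex}\,K = \overline{\mathrm{co}}\{\lambda[l,t]\colon l\le t\le r\} = \mathcal{D}[l,r]$. Combining with the first step, $T^*$ maps $\mathcal{P}[l,r]$ homeomorphically onto $\mathcal{D}[l,r]$ and sends $\delta(t)$ to $\lambda[l,t]$, as claimed.

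I do not expect a genuine obstacle here. The only point needing a moment's care is the claim that the extreme points of the image $K$ are exactly the images of the extreme points of $\mathcal{P}[l,r]$, which rests on $T^*|_{\mathcal{P}[l,r]}$ being an affine homeomorphism onto the compact convex set $K$; alternatively one can bypass the Krein--Milman theorem applied to $K$ and argue directly that $T^*(\mathcal{P}[l,r]) = T^*(\overline{\mathrm{co}}\{\delta(t)\colon l\le t\le r\}) = \overline{\mathrm{co}}\{\lambda[l,t]\colon l\le t\le r\} = \mathcal{D}[l,r]$, using continuity and linearity of $T^*$ together with compactness of $\mathcal{P}[l,r]$ to push the closed convex hull through the map.
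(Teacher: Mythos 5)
Your proposal is correct and follows essentially the same route as the paper: compute $T^*(\delta(t)) = \lambda[l,t]$ by duality, note that the affine homeomorphism $T^*$ carries $\mathrm{ex}\,\mathcal{P}[l,r]$ onto the extreme points of the compact convex image, and then identify the image with $\mathcal{D}[l,r]$ via the Krein--Milman theorem together with \cref{cor:D_lr_is_closed_convex_hull}. No gaps; the alternative you sketch (pushing the closed convex hull through $T^*$ directly) is a valid minor variant but not needed.
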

\begin{proof}
Since $\mathcal{P}[l, r]$ is a compact convex set and $T^*$ is a homeomorphic embedding, we have $T^*(\mathcal{P}[l, r])$ is also a compact convex set.

Any linear bijection between convex sets provides a one-to-one mapping between their extreme points. Thus, since $\mathrm{ex}\,\mathcal{P}[l, r] = \{\delta(t) \colon l \le t \le r\}$, we conclude that $\mathrm{ex}\,T^*(\mathcal{P}[l, r]) = \{T^*(\delta(t)) \colon l \le t \le r\}$.

For any continuous function $\varphi \in C[l, r]$ we have
\begin{align*}
\langle T^*(\delta(t)), \varphi \rangle = T(\varphi)(t) = \begin{dcases}
\int_l^t \varphi(x)\,\frac{dx}{t - l} &\text{if $l < t \le r$},\\
\varphi(l) &\text{otherwise}.
\end{dcases}
\end{align*}
Hence, $T^*(\delta(t)) = \lambda[l, t]$ for all $t \in [l, r]$, and therefore $\mathrm{ex}\,T^*(\mathcal{P}[l, r]) = \{\lambda[l, t] \colon l \le t \le r\}$. Since $T^*(\mathcal{P}[l, r])$ is a compact convex subset of the locally convex vector space $\mathcal{M}[l, r]$, by the Krein-Milman theorem we conclude that \[
T^*(\mathcal{P}[l, r]) = \overline{\mathrm{co}}\,\{\lambda[l, t] \colon l \le t \le r\}.
\]
Thus, it follows from \cref{cor:D_lr_is_closed_convex_hull} that $T^*(\mathcal{P}[l, r]) = \mathcal{D}[l, r]$.

Finally, since $T^*$ is a linear homeomorphic embedding, this operator provides a linear homeomorphism between $\mathcal{P}[l, r]$ and $T^*(\mathcal{P}[l, r]) = \mathcal{D}[l, r]$.
\end{proof}
As an application we describe extreme points of the set of measures with nonincreasing density function with the fixed first moment.

\begin{definition}
Given points $l$ and $r$ on the real line such that $l \le r$, and a real number $e$. We denote by $\mathcal{D}[l, r; e]$ the set of probability measures $\mu \in \mathcal{D}[l, r]$ such that $\mathbb{E}(\mu) = e$.
\end{definition}

The function $\mathbb{E} \colon \mathcal{P}[l, r] \to \R$ is continuous, and therefore the set $\mathcal{D}[l, r; e]$ is a closed subset of $\mathcal{D}[l, r]$; hence, since $\mathcal{D}[l, r]$ is compact, the set $\mathcal{D}[l, r; e]$ is also compact. In addition, since the function $\mathbb{E}$ is linear, the set $\mathcal{D}[l, r; e]$ is convex. Thus, the set $\mathcal{D}[l, r; e]$ is a compact convex subset of $\mathcal{M}[l, r]$, and therefore it is fully described by its extreme points.

Let $\mu \in \mathcal{P}[l, r]$. If we denote $
\mu^* = T^*(\mu) \in \mathcal{D}[l, r]$, then we have
\[
\int_l^r t\,\mu^*(dt) = \int_l^r \frac{\int_l^t x\,dx}{t - l}\,\mu(dt) = \int_l^r \frac{l + t}{2}\,\mu(dt).
\]
Thus, the linear operator $T^*$ provides a linear homeomorphism between the set \[\mathcal{P}[l, r; 2e - l] = \{\mu \in \mathcal{P}[l, r] \colon \mathbb{E}(\mu) = 2e - l\}\] and $\mathcal{D}[l, r; e]$, and therefore it provides a bidirectional mapping between $\mathrm{ex}\,\mathcal{P}[l, r; 2e - l]$ and $\mathrm{ex}\,\mathcal{D}[l, r; e]$.

We only need to find the extreme points of $\mathcal{P}[l, r; 2e - l]$. To do this, we use the following theorem describing extreme points of the set of probability measures having prescribed values for the integrals of (a finite number of) prescribed functions $f_i$. 

\begin{theorem}[{{\cite[Theorem 2.1]{karr_1983}}}]\label{thm:extreme_points_of_certain_sets}
Let $E$ be a compact metric space with the Borel $\sigma$-algebra. Let $f_1, 
\dots, f_n$ be bounded continuous functions on $E$, and let $c_1, \dots, c_n$ be real numbers. Denote by $K$ the set of Borel probability measures $
\mu$ for which
\[
\int_E f_i(x)\,\mu(dx) = c_i \text{ for $i = 1, \dots, n$}.
\]
Then for each $\mu \in K$ the following assertions are equivalent:
\begin{assertions}
    \item $\mu$ is an extreme point of $K$;
    \item $\#(\supp(\mu)) \le n + 1$ and if $\supp(\mu) = \{x_1, \dots, x_k\}$, then the vectors $v_i = (f_1(x_i), \dots, f_n(x_i), 1)$, $1 \le i \le k$, are linearly independent.
\end{assertions}
\end{theorem}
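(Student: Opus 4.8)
The plan is to recast the extremality of a given $\mu \in K$ as the nonexistence of a nontrivial admissible perturbation. Call a signed measure $\nu$ \emph{admissible} for $\mu$ if $\nu \ll \mu$ with bounded Radon--Nikodym density, $\int_E f_i\,d\nu = 0$ for $i = 1, \dots, n$, and $\nu(E) = 0$. I would first show that $\mu$ is an extreme point of $K$ if and only if $\nu = 0$ is the only measure admissible for $\mu$. Indeed, if $\nu \neq 0$ is admissible then $\mu \pm \varepsilon\nu \geq 0$ for all small $\varepsilon > 0$ (here the bounded density is used), and these measures have total mass $1$ and the prescribed moments, so they lie in $K$; since $\mu = \tfrac12\bigl((\mu+\varepsilon\nu) + (\mu-\varepsilon\nu)\bigr)$, the point $\mu$ is not extreme. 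Conversely, if $\mu = \tfrac12(\mu_1 + \mu_2)$ is a nontrivial convex combination with $\mu_1, \mu_2 \in K$, then $\mu_i \leq 2\mu$, so $\nu := \mu_1 - \mu$ is a nonzero signed measure with density in $[-1,1]$ with respect to $\mu$, and it is admissible.

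For the implication $(ii) \Rightarrow (i)$, assume $(ii)$. Then $\supp(\mu) = \{x_1, \dots, x_k\}$ is finite and the vectors $v_i = (f_1(x_i), \dots, f_n(x_i), 1)$ are linearly independent. Any admissible $\nu$ is absolutely continuous with respect to $\mu$ and hence concentrated on $\{x_1, \dots, x_k\}$, so $\nu = \sum_{i=1}^{k} t_i\,\delta(x_i)$ for some reals $t_i$. The conditions $\int_E f_j\,d\nu = 0$ $(j = 1, \dots, n)$ and $\nu(E) = 0$ say precisely that $\sum_{i=1}^{k} t_i v_i = 0$, so linear independence forces every $t_i = 0$, i.e.\ $\nu = 0$; by the criterion above, $\mu$ is extreme.

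For $(i) \Rightarrow (ii)$ I would argue contrapositively: assuming $(ii)$ fails, I produce a nonzero admissible $\nu$. If $\#\supp(\mu) \geq n+2$ then $\mu$ either has at least $n+2$ atoms or a nonzero non-atomic part, and in either case one can choose pairwise disjoint Borel sets $A_1, \dots, A_{n+2}$ with $\mu(A_i) > 0$; put $\sigma_i = \mu|_{A_i}$. The $n+2$ vectors $w_i = \bigl(\int_E f_1\,d\sigma_i, \dots, \int_E f_n\,d\sigma_i,\ \sigma_i(E)\bigr)$ live in $\R^{n+1}$, hence $\sum_{i} \theta_i w_i = 0$ for some $(\theta_1, \dots, \theta_{n+2}) \neq 0$, and $\nu := \sum_i \theta_i \sigma_i$ is nonzero, has vanishing $f_j$-moments and total mass, and has density $\theta_i$ on $A_i$, so it is admissible. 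In the remaining case $\supp(\mu) = \{x_1, \dots, x_k\}$ with $k \leq n+1$ but $v_1, \dots, v_k$ linearly dependent: choose $(\theta_1, \dots, \theta_k) \neq 0$ with $\sum_i \theta_i v_i = 0$ and set $\nu = \sum_i \theta_i\,\delta(x_i)$, which is nonzero and admissible by the same computation. In both cases $\mu$ fails to be extreme.

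The step I expect to be the main obstacle is the measure-theoretic fact used above: a finite Borel measure on a compact metric space that is \emph{not} a combination of at most $n+1$ point masses always admits $n+2$ pairwise disjoint Borel sets of positive measure. This I would handle by splitting $\mu$ into its atomic and non-atomic parts --- each atom supplies one disjoint set, and a nonzero non-atomic part can be partitioned into arbitrarily many sets of positive measure (since a non-atomic finite measure attains every value between $0$ and its total mass). Once this partition lemma and the pigeonhole ``$n+2$ vectors in $\R^{n+1}$ are linearly dependent'' are in hand, the rest is the bookkeeping already assembled in the perturbation criterion.
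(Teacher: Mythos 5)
Your argument is correct, and it is necessarily a different route from the paper's, because the paper does not prove this statement at all: it is quoted verbatim from Karr's article and used as a black box. What you have written is a complete, self-contained proof, and your perturbation criterion (extremality of $\mu$ in $K$ is equivalent to the nonexistence of a nonzero signed measure $\nu \ll \mu$ with bounded density, vanishing $f_i$-moments and vanishing total mass) is in substance the standard technique by which results of this type are proved in the moment-problem literature, so your proposal supplies the missing justification rather than replacing a different one. Two small points to tighten. First, in the converse half of your criterion you only treat the midpoint case $\mu = \tfrac12(\mu_1+\mu_2)$; either invoke the standard fact that extremality in a convex set is equivalent to not being a midpoint of two distinct points, or run the same argument with a general weight $\alpha\in(0,1)$, using $\mu_1 \le \alpha^{-1}\mu$ to get a density bounded by $\alpha^{-1}$ --- the computation is unchanged. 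Second, the step you flag as the main obstacle (producing $n+2$ pairwise disjoint sets of positive measure when $\#(\supp(\mu)) \ge n+2$) does not need the atomic/non-atomic decomposition or the intermediate-value property of non-atomic measures: pick $n+2$ distinct points of the support and pairwise disjoint open balls around them; each ball has positive measure by the very definition of the support, and your linear-dependence argument in $\mathbb{R}^{n+1}$ applied to the restrictions of $\mu$ to these balls goes through verbatim. With these cosmetic adjustments the proof is complete and elementary.
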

Using this theorem, we obtain the following result:
\begin{proposition}
The measure $\mu \in \mathcal{P}[l, r]$ is an extreme point of $\mathcal{P}[l, r; 2e - l]$ if and only if $\mathbb{E}(\mu) = 2e - l$ and $\mu$ can be written as
\begin{equation}\label{eq:ex_P_lre_representation}
\mu = \alpha \delta(p^{(1)}) + (1 - \alpha) \delta(p^{(2)}),
\end{equation}
where $l \le p^{(1)} \le p^{(2)} \le r$ and $0 < \alpha < 1$.
\end{proposition}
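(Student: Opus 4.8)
The plan is to deduce this directly from \cref{thm:extreme_points_of_certain_sets}: one applies that theorem with the compact metric space $E = [l, r]$, with $n = 1$, with the single bounded continuous function $f_1(x) = x$, and with $c_1 = 2e - l$. With these choices the set $K$ of the theorem is precisely $\mathcal{P}[l, r; 2e - l]$, so it remains only to translate the combinatorial criterion of the theorem into the shape \cref{eq:ex_P_lre_representation}.

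For the necessity I would start from an extreme point $\mu$ of $\mathcal{P}[l, r; 2e - l]$. By \cref{thm:extreme_points_of_certain_sets} its support $\supp(\mu) = \{x_1, \dots, x_k\}$ has $k \le n + 1 = 2$. If $k = 1$, then $\mu = \delta(x_1)$ and the constraint forces $x_1 = \mathbb{E}(\mu) = 2e - l \in [l, r]$, so \cref{eq:ex_P_lre_representation} holds with $p^{(1)} = p^{(2)} = x_1$ and any $\alpha \in (0, 1)$. If $k = 2$, then $\mu = \alpha \delta(x_1) + (1 - \alpha)\delta(x_2)$ with $x_1 \ne x_2$, and both weights are positive because $x_1, x_2 \in \supp(\mu)$, i.e.\ $\alpha \in (0, 1)$; after relabelling so that $x_1 < x_2$ one again gets the form \cref{eq:ex_P_lre_representation}.

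For the sufficiency I would take $\mu = \alpha \delta(p^{(1)}) + (1 - \alpha)\delta(p^{(2)})$ as in \cref{eq:ex_P_lre_representation} with $\mathbb{E}(\mu) = 2e - l$. In the degenerate case $p^{(1)} = p^{(2)}$ the measure $\mu = \delta(p^{(1)})$ is already an extreme point of $\mathcal{P}[l, r]$, hence a fortiori an extreme point of the convex subset $\mathcal{P}[l, r; 2e - l]$. If instead $p^{(1)} < p^{(2)}$, then $\supp(\mu) = \{p^{(1)}, p^{(2)}\}$ has exactly $n + 1 = 2$ points and the vectors $(f_1(p^{(1)}), 1) = (p^{(1)}, 1)$ and $(p^{(2)}, 1)$ are linearly independent, since the corresponding $2 \times 2$ determinant equals $p^{(1)} - p^{(2)} \ne 0$; \cref{thm:extreme_points_of_certain_sets} then yields that $\mu$ is an extreme point of $\mathcal{P}[l, r; 2e - l]$.

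I do not expect any genuine obstacle here: once \cref{thm:extreme_points_of_certain_sets} is available the argument is elementary case-checking, and the observation worth making explicit is that for the particular function $f_1(x) = x$ the linear-independence condition is automatic as soon as the (at most two) atoms are distinct. The only point deserving a word of care is the degenerate case $p^{(1)} = p^{(2)}$ in the sufficiency part, where it is cleanest to invoke extremality of a Dirac mass in the ambient simplex rather than the theorem itself — although the $k = 1$ branch of \cref{thm:extreme_points_of_certain_sets}, in which the single nonzero vector $(p^{(1)}, 1)$ is trivially linearly independent, works equally well.
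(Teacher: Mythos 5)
Your proposal is correct and follows essentially the same route as the paper: both apply \cref{thm:extreme_points_of_certain_sets} with $E = [l, r]$, $n = 1$, $f_1(x) = x$, $c_1 = 2e - l$, observe that for at most two distinct atoms the vectors $(x_i, 1)$ are automatically linearly independent, and then translate the support-size criterion into the representation \cref{eq:ex_P_lre_representation}. Your explicit treatment of the degenerate case $p^{(1)} = p^{(2)}$ is a slightly more detailed version of the paper's remark that the representation is then non-unique, but the argument is the same.
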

\begin{proof}
The segment $[l, r]$ is a compact metric space, and the function $f(x) = x$ is a bounded continuous function defined on this segment. In addition, if $k \le 2$ and the points $x_1, \dots, x_k$ are pairwise disjoint, then the vectors $(x_1, 1), \dots, (x_k, 1)$ are linearly independent. Thus, it follows from \cref{thm:extreme_points_of_certain_sets} that $\mu$ is an extreme point of $\mathcal{P}[l, r; 2e - l]$ if and only if $\mu \in \mathcal{P}[l, r; 2e - l]$ and $\#(\supp(\mu)) \le 2$.

One can easily verify that $\#(\supp(\mu)) \le 2$ if and and only if $\mu$ can be represented as in equation \cref{eq:ex_P_lre_representation}. Note that if $\#(\supp(\mu)) = 1$, then this representation is not unique.
\end{proof}
Finally, since $T^*$ provides a bidirectional mapping between $\mathrm{ex}\,\mathcal{P}[l, r; 2e - l]$ and $\mathrm{ex}\,\mathcal{D}[l, r; e]$ and $T^*(\delta(p)) = \lambda[l, p]$ for all $p \in [l, r]$, we obtain the following result.
\begin{corollary}\label{cor:ex_points_of_D_lre}
The measure $\mu$ is an extreme point of $\mathcal{D}[l, r; e]$ if and only if $\mathbb{E}(\mu) = e$ and $\mu$ can be written as
\[
\mu = \alpha \lambda[l, p^{(1)}] + (1 - \alpha)\lambda[l, p^{(2)}],
\]
where $l \le p^{(1)} \le p^{(2)} \le r$ and $0 < \alpha < 1$.
\end{corollary}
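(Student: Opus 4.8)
The plan is to transport the description of $\mathrm{ex}\,\mathcal{P}[l, r; 2e - l]$ obtained in the preceding proposition across the linear homeomorphism $T^*$. Recall that the operator $T^*$ of \cref{lem:T_operator_is_homeomorhic_embedding} is a linear homeomorphism of $\mathcal{P}[l, r]$ onto $\mathcal{D}[l, r]$, and that the identity $\int_l^r t\,\mu^*(dt) = \int_l^r \frac{l + t}{2}\,\mu(dt)$ gives $\mathbb{E}(T^*\mu) = \frac{l + \mathbb{E}(\mu)}{2}$. Hence $T^*$ carries the slice $\mathcal{P}[l, r; 2e - l] = \{\mu \colon \mathbb{E}(\mu) = 2e - l\}$ bijectively and homeomorphically onto the slice $\mathcal{D}[l, r; e]$.

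Next I would invoke the elementary fact that a linear bijection between convex sets maps extreme points onto extreme points in both directions; hence $T^*$ restricts to a bijection from $\mathrm{ex}\,\mathcal{P}[l, r; 2e - l]$ onto $\mathrm{ex}\,\mathcal{D}[l, r; e]$. By the preceding proposition, the elements of the former set are exactly the measures $\nu = \alpha\delta(p^{(1)}) + (1 - \alpha)\delta(p^{(2)})$ with $l \le p^{(1)} \le p^{(2)} \le r$, $0 < \alpha < 1$ and $\mathbb{E}(\nu) = 2e - l$. Applying $T^*$, using linearity together with the identity $T^*(\delta(p)) = \lambda[l, p]$ recorded just before the statement, we obtain
\[
T^*(\nu) = \alpha\,\lambda[l, p^{(1)}] + (1 - \alpha)\,\lambda[l, p^{(2)}],
\]
which has the claimed form and satisfies $\mathbb{E}(T^*\nu) = e$. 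This proves that every extreme point of $\mathcal{D}[l, r; e]$ has the stated shape. For the converse I would start from a measure $\mu = \alpha\,\lambda[l, p^{(1)}] + (1 - \alpha)\,\lambda[l, p^{(2)}]$ with $\mathbb{E}(\mu) = e$, set $\nu = \alpha\delta(p^{(1)}) + (1 - \alpha)\delta(p^{(2)})$, note that $T^*(\nu) = \mu$ and $\mathbb{E}(\nu) = 2e - l$ by the moment formula, so $\nu \in \mathcal{P}[l, r; 2e - l]$; since $\nu$ is supported on at most two points it is an extreme point of that slice by the preceding proposition, and therefore $\mu = T^*(\nu)$ is an extreme point of $\mathcal{D}[l, r; e]$.

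I do not expect a genuine obstacle here: the substance is already contained in the preceding proposition and in \cref{lem:T_operator_is_homeomorhic_embedding}, and the corollary is a formal transfer along $T^*$. The only points deserving a line of care are, first, that $T^*$ maps the constrained slice \emph{onto} (and not merely into) the target slice, which holds because $T^*$ is surjective onto $\mathcal{D}[l, r]$ and the correspondence $\mathbb{E}(\mu) \mapsto \mathbb{E}(T^*\mu) = \frac{l + \mathbb{E}(\mu)}{2}$ is an affine bijection of $\R$; and second, that when $p^{(1)} = p^{(2)}$ the representation degenerates and is no longer unique, but the stated form still covers these measures (taking any $\alpha \in (0, 1)$), so nothing is lost.
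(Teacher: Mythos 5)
Your proposal is correct and follows essentially the same route as the paper: the paper likewise transfers the description of $\mathrm{ex}\,\mathcal{P}[l, r; 2e - l]$ through the linear homeomorphism $T^*$, using the moment identity $\mathbb{E}(T^*\mu) = \frac{l + \mathbb{E}(\mu)}{2}$ and the fact that $T^*(\delta(p)) = \lambda[l, p]$. The extra care you take about surjectivity onto the slice and the degenerate case $p^{(1)} = p^{(2)}$ is sound and consistent with the paper's remarks.
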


\section{The set \texorpdfstring{$\mathcal{V}^N[C]$}{VN[C]} and its subextreme points}\label{sec:VNC}

\subsection{Definition of the set \texorpdfstring{$\mathcal{V}^N[C]$}{VN[C]} and step \texorpdfstring{$C$}{C}-compatible tuples}
For a positive integer $N$, we denote by $\mathcal{M}^N(X)$  the space of $N$-tuples of measures $\vec{\mu} = (\mu_1, \dots, \mu_N)$ such that $\mu_k \in \mathcal{M}(X)$ for each $k = 1, \dots, N$. This space equipped with the product topology is a locally convex vector space. We also consider the subset $\mathcal{P}^N(X) \subset \mathcal{M}^N(X)$ of $N$-tuples of probability measures, and if $K$ is a compact subset of $X$, we will identify $\mathcal{P}^N(K)$ with a compact convex subset of $\mathcal{M}^N(X)$ as in the previous section.
\begin{definition}
Let $C$ be a real number. A pair of $N$-tuples of points $(\vec{l}, \vec{r})$ is called \textit{a $C$-compatible boundary} if the inequality
\[
0 \le r_k - l_k \le C - (l_1 + \dots + l_N)
\]
holds for all $k = 1, \dots, N$.
\end{definition}
\begin{definition}
Let $C$ be a real number. We say that an $N$-tuple of probability measures $\vec{\mu}$ is contained in the set $\mathcal{V}^N[C] \subset \mathcal{P}^N(\R)$ if $\mathbb{E}(\mu_1) + \dots + \mathbb{E}(\mu_n) = C$ and there exists a $C$-compatible boundary $(\vec{l}, \vec{r})$ such that $\mu_k \in \mathcal{D}[l_k, r_k]$ for all $k = 1, \dots, N$.
\end{definition}
To derive the sufficiency in \cref{thm:main_theorem}, it is enough to prove that if $\vec{\mu} \in \mathcal{V}^N[C]$, then $\vec{\mu}$ is a flat $N$-tuple of measures, or more precisely there exists a transport plan $\gamma$ concentrated on the hyperplane $\{x_1 + \dots + x_N = C\}$ such that $\mu_k = \prj{k}{\gamma}$ for all $k = 1, \dots, N$.

Let us find  sufficient conditions for an $N$-tuple of probability measures $\vec{\mu}$ with $\mu_k = \lambda[l_k, r_k]$ for each $k$ to belong $\mathcal{V}^N[C]$.
\begin{proposition}\label{prop:lebesgue_tuple_in_V_C_ineq}
For $1 \le k \le N$, let $\mu_k = \lambda[l_k, r_k]$. Suppose that
\[
C = \frac{l_1 + r_1}{2} + \dots + \frac{l_N + r_N}{2}
\]
and that the inequality $r_k - l_k \le \sum_{i \ne k}(r_i - l_i)$ holds for all $k = 1, \dots, N$. Then $\vec{\mu} \in \mathcal{V}^N[C]$.
\end{proposition}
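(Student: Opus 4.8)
The plan is to produce an explicit $C$-compatible boundary witnessing that $\vec{\mu}$ lies in $\mathcal{V}^N[C]$. Write $d_i := r_i - l_i \ge 0$ for brevity. The first (and essentially only) observation is that the two hypotheses can be repackaged into a single clean form: adding $d_k$ to both sides of $r_k - l_k \le \sum_{i \ne k}(r_i - l_i)$ shows that this inequality is equivalent to $d_k \le \frac12\sum_{i=1}^N d_i$, while the normalization $C = \sum_{k=1}^N \frac{l_k + r_k}{2}$ gives $C - (l_1 + \dots + l_N) = \frac12\sum_{i=1}^N d_i$. Hence the standing assumption says exactly $r_k - l_k \le C - (l_1 + \dots + l_N)$ for all $k$, and in particular $r_k \le l_k + \bigl(C - (l_1 + \dots + l_N)\bigr)$.

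With this in hand I set $l_k' := l_k$ and $r_k' := l_k + \bigl(C - (l_1 + \dots + l_N)\bigr)$ for each $k$, and check the three requirements in the definition of $\mathcal{V}^N[C]$. First, $(\vec{l}\,', \vec{r}\,')$ is a $C$-compatible boundary: since $l_1' + \dots + l_N' = l_1 + \dots + l_N$, for every $k$ we have $r_k' - l_k' = C - (l_1' + \dots + l_N') = \frac12\sum_i d_i \ge 0$, so the inequalities $0 \le r_k' - l_k' \le C - (l_1' + \dots + l_N')$ hold (with equality on the right). Second, $r_k' \ge r_k$ by the rewritten hypothesis, so $\supp(\mu_k) = [l_k, r_k] \subset [l_k', r_k'] = [l_k, r_k']$, and when $l_k < r_k$ the density $\frac{1}{r_k - l_k}\mathbbm{1}_{[l_k, r_k]}$ of $\mu_k$ is nonincreasing on $[l_k, r_k']$, whence $\mu_k \in \mathcal{D}_{AC}[l_k', r_k'] \subset \mathcal{D}[l_k', r_k']$; in the degenerate case $l_k = r_k$ one has $\mu_k = \delta(l_k) \in \mathcal{D}[l_k, r_k']$ directly (the $\alpha = 1$ clause). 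Third, $\mathbb{E}(\mu_1) + \dots + \mathbb{E}(\mu_N) = \sum_k \frac{l_k + r_k}{2} = C$ by assumption. These are precisely the conditions for $\vec{\mu} \in \mathcal{V}^N[C]$.

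I do not expect a genuine obstacle here; the only point that needs a moment's care is that the boundary must be enlarged on the \emph{right} rather than on the left, since shifting $l_k'$ below $l_k$ would create an upward jump in the density of $\mu_k$ and destroy monotonicity. Once this is fixed, the budget $C - (l_1 + \dots + l_N)$ available for the length $r_k' - l_k'$ is exactly what the hypothesis $r_k - l_k \le \sum_{i \ne k}(r_i - l_i)$ furnishes, by the elementary identity in the first paragraph.
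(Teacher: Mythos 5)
Your proof is correct and takes essentially the same route as the paper: the key identity $C - (l_1 + \dots + l_N) = \tfrac12\sum_{i}(r_i - l_i)$ shows the hypothesis is equivalent to $r_k - l_k \le C - (l_1 + \dots + l_N)$ for all $k$, which is exactly the $C$-compatibility condition, and $\sum_k \mathbb{E}(\mu_k) = C$ is immediate. The only difference is your enlargement of the right endpoints to $r_k' = l_k + \bigl(C - (l_1 + \dots + l_N)\bigr)$, which is superfluous: your first paragraph already shows that the original pair $(\vec{l}, \vec{r})$ is itself a $C$-compatible boundary with $\mu_k = \lambda[l_k, r_k] \in \mathcal{D}[l_k, r_k]$, which is precisely what the paper uses.
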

\begin{proof}
Since $\mu_k = \lambda[l_k, r_k]$, we have $\mathbb{E}(\mu_k) = (l_k + r_k)/2$, and therefore
\[
\mathbb{E}(\mu_1) + \dots + \mathbb{E}(\mu_N) = \frac{l_1 + r_1}{2} + \dots + \frac{l_N + r_N}{2} = C.
\]

Let us verify that $(\vec{l}, \vec{r})$ is a $C$-compatible boundary. We have
\[
C - (l_1 + \dots + l_N) = \frac{r_1 - l_1}{2} + \dots + \frac{r_N - l_N}{2},
\]
and therefore the inequality $r_k - l_k \le C - (l_1 + \dots + l_N)$ is equivalent to $2(r_k - l_k) \le (r_1 - l_1) + \dots + (r_N - l_N)$, which trivially follows from the condition $r_k - l_k \le \sum_{i \ne k}(r_i - l_i)$. Thus, since $\mu_k \in \mathcal{D}[l_k, r_k]$ for all $k = 1, \dots, N$, we conclude that $\vec{\mu} \in \mathcal{V}^N[C]$.
\end{proof}

\begin{corollary}\label{cor:lebesgue_tuple_in_V_C_eq}
For $1 \le k \le N$, let $\mu_k = \lambda[l_k, r_k]$. Suppose that
\[
C = \frac{l_1 + r_1}{2} + \dots + \frac{l_N + r_N}{2}
\]
and that there exists an index $m = 1, \dots, N$ such that $r_m - l_m = \sum_{i \ne m}(r_i - l_i)$. Then $\vec{\mu} \in \mathcal{V}^N[C]$.
\end{corollary}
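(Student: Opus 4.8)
The plan is to deduce the corollary directly from \cref{prop:lebesgue_tuple_in_V_C_ineq}. The moment condition $C = \frac{l_1+r_1}{2} + \dots + \frac{l_N+r_N}{2}$ is identical in both statements, so the only thing that needs checking is that the single equality $r_m - l_m = \sum_{i\neq m}(r_i-l_i)$ assumed here forces the whole family of inequalities $r_k - l_k \le \sum_{i\neq k}(r_i-l_i)$, $k = 1,\dots,N$, that \cref{prop:lebesgue_tuple_in_V_C_ineq} requires. Once that is established, the proposition immediately yields $\vec\mu \in \mathcal{V}^N[C]$.

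To verify the inequalities: for $k = m$ the assumed equality is in particular a non-strict inequality, so nothing is needed. For $k \neq m$ I would chain two elementary comparisons. First, since the index $m$ is one of the indices appearing in $\sum_{i\neq k}(r_i-l_i)$ and all summands $r_i - l_i$ are nonnegative, $\sum_{i\neq k}(r_i-l_i) \ge r_m - l_m$. Second, since the index $k$ appears in $\sum_{i\neq m}(r_i-l_i)$, which equals $r_m - l_m$ by hypothesis, we get $r_m - l_m \ge r_k - l_k$. Combining, $r_k - l_k \le r_m - l_m \le \sum_{i\neq k}(r_i-l_i)$, which is exactly the hypothesis of \cref{prop:lebesgue_tuple_in_V_C_ineq} at the index $k$.

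I do not expect any real obstacle here: the content of the corollary is merely that the ``boundary'' equality case is covered, which works precisely because the inequality in \cref{prop:lebesgue_tuple_in_V_C_ineq} is non-strict, and the two-line comparison above shows that an equality at one index automatically dominates the corresponding inequality at all the others. If one preferred a proof not routed through \cref{prop:lebesgue_tuple_in_V_C_ineq}, one could instead note that $r_m - l_m = \sum_{i\neq m}(r_i-l_i)$ rewrites as $r_m - l_m = \frac{1}{2}\sum_{i=1}^N(r_i-l_i) = C - (l_1+\dots+l_N)$, so $(\vec l, \vec r)$ is a $C$-compatible boundary (the defining constraint being attained with equality at $m$ and with slack elsewhere), and since $\mu_k = \lambda[l_k,r_k] \in \mathcal{D}[l_k,r_k]$ for each $k$ and $\sum_k \mathbb{E}(\mu_k) = \sum_k \frac{l_k+r_k}{2} = C$, the tuple $\vec\mu$ belongs to $\mathcal{V}^N[C]$ directly from the definition.
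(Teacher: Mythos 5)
Your proof is correct and is essentially the paper's own argument: reduce to \cref{prop:lebesgue_tuple_in_V_C_ineq} and, for $k \ne m$, chain $r_k - l_k \le r_m - l_m \le \sum_{i \ne k}(r_i - l_i)$ using nonnegativity of the lengths $r_i - l_i$. (The parenthetical ``with slack elsewhere'' in your alternative route need not hold, e.g.\ when two lengths tie, but it is not needed, since a $C$-compatible boundary only requires the non-strict inequality.)
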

\begin{proof}
By \cref{prop:lebesgue_tuple_in_V_C_ineq} we only need to prove that the inequality $r_k - l_k \le \sum_{i \ne k}(r_i - l_i)$ holds for all $k = 1, \dots, N$. If $k = m$, then the equality is achieved. Since $r_m - l_m = \sum_{i \ne m}(r_i - l_i)$, we conclude that $r_m - l_m \ge r_k - l_k$ for all $k$. Hence, if $k \ne m$, then \[
\sum_{i \ne k}(r_i - l_i) \ge r_m - l_m \ge r_k - l_k,
\]
and therefore the inequality $r_k - l_k \le \sum_{i \ne k}(r_i - l_i)$ holds for all $k = 1, \dots, N$.
\end{proof}
The set $\mathcal{V}^N[C]$ is noncompact and even nonconvex. Nevertheless, in the this section we find "extreme" points of this set to prove in the following that each of them is a flat $N$-tuple of measures. 

If $X$ is a (not necessary convex) subset of a locally convex vector space, we say that a point $x$ is a \textit{subextreme point of the space $X$} if $x \in X$ and we cannot represent it in the form $x = \alpha x_1 + (1 - \alpha)x_2$, where $x_1$ and $x_2$ are distinct points of $X$ and $\alpha \in (0, 1)$. If $X$ is convex, then extreme and subextreme points of $X$ are the same, and therefore the definition of subextreme points is a continuation of the definition of extreme points on nonconvex sets. We will denote the set of subextreme points of $X$ by $\mathrm{se}\,X$.

Let us introduce the following definition.
\begin{definition}
An $N$-tuple of probability measures $\vec{\mu}$ is called \textit{a step $C$-compatible tuple} if $\mathbb{E}(\mu_1) + \dots + \mathbb{E}(\mu_N) = C$ and $\mu_k$ can be written as \[\mu_k = \alpha_k\lambda[l_k, p_k] + (1 - \alpha_k)\lambda[l_k, r_k]\] for each $k = 1, \dots, N$, where $(\vec{l}, \vec{r})$ is a $C$-compatible boundary, $l_k \le p_k \le r_k$ and $0 < \alpha_k < 1$ for all $k$.
\end{definition}

One can easily verify that every step $C$-compatible tuple is contained in $\mathcal{V}^N[C]$. If $\vec{\mu}$ is a step $C$-compatible tuple, then $\supp(\mu_k) = [l_k, r_k]$, which follows from the inequality $\alpha_k < 1$. Hence, the $N$-tuples of points $\vec{l}$ and $\vec{r}$ are uniquely defined. In addition, since $\alpha > 0$, the point $p_k$ is also uniquely defined by $\mu_k$ for all $k$. In the following we will use the same notation for the $N$-tuples of points $\vec{l}$, $\vec{r}$ and $\vec{p}$ in the context of step $C$-compatible tuples.

The following statement motivates the definition of a step $C$-compatible tuple.

\begin{proposition} \label{prop:2_ex_of_V_C_is_step_C_tuple}
If $\vec{\mu} \in \mathrm{se}\,\mathcal{V}^N[C]$, then $\vec{\mu}$ is a step $C$-compatible tuple.
\end{proposition}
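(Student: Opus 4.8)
The plan is to exhibit $\vec{\mu}$ as an extreme point of a convex ``slice'' of $\mathcal{V}^N[C]$ on which the extreme points have already been classified. Since $\vec{\mu} \in \mathcal{V}^N[C]$, first I would fix a $C$-compatible boundary $(\vec{l}, \vec{r})$ with $\mu_k \in \mathcal{D}[l_k, r_k]$ for all $k$, and set $e_k = \mathbb{E}(\mu_k)$, so that $e_1 + \dots + e_N = C$ and $\mu_k \in \mathcal{D}[l_k, r_k; e_k]$. Then $\vec{\mu}$ lies in the product $\mathcal{D}^N[\vec{l}, \vec{r}; \vec{e}\,] = \prod_{k = 1}^N \mathcal{D}[l_k, r_k; e_k]$, which is a convex subset of $\mathcal{V}^N[C]$ because every tuple in it belongs to $\mathcal{V}^N[C]$ via the very same boundary $(\vec{l}, \vec{r})$.

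Next I would use the elementary fact that a subextreme point of $\mathcal{V}^N[C]$ which happens to lie in a convex subset $S \subseteq \mathcal{V}^N[C]$ is automatically an extreme point of $S$: any proper convex decomposition of $\vec{\mu}$ inside $S$ is in particular a proper convex decomposition inside $\mathcal{V}^N[C]$, contradicting $\vec{\mu} \in \mathrm{se}\,\mathcal{V}^N[C]$. Applying this with $S = \mathcal{D}^N[\vec{l}, \vec{r}; \vec{e}\,]$ gives $\vec{\mu} \in \mathrm{ex}\,\mathcal{D}^N[\vec{l}, \vec{r}; \vec{e}\,]$, and since an extreme point of a finite product of convex sets is a tuple of extreme points of the factors, we obtain $\mu_k \in \mathrm{ex}\,\mathcal{D}[l_k, r_k; e_k]$ for each $k$.

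Finally, \cref{cor:ex_points_of_D_lre} yields $\mu_k = \alpha_k \lambda[l_k, p_k^{(1)}] + (1 - \alpha_k)\lambda[l_k, p_k^{(2)}]$ with $l_k \le p_k^{(1)} \le p_k^{(2)} \le r_k$ and $0 < \alpha_k < 1$. Setting $l_k' = l_k$, $r_k' = p_k^{(2)}$ and $p_k = p_k^{(1)}$ rewrites $\mu_k = \alpha_k \lambda[l_k', p_k] + (1 - \alpha_k)\lambda[l_k', r_k']$ with $l_k' \le p_k \le r_k'$ and $0 < \alpha_k < 1$. The one point requiring care is that $(\vec{l}', \vec{r}')$ is again a $C$-compatible boundary: since only the right endpoints were trimmed, $\sum_k l_k' = \sum_k l_k$, whence $0 \le r_k' - l_k' = p_k^{(2)} - l_k \le r_k - l_k \le C - \sum_i l_i = C - \sum_i l_i'$. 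Together with $\sum_k \mathbb{E}(\mu_k) = C$, this is exactly the definition of a step $C$-compatible tuple. I expect this trimming observation --- that one may shrink the support bounds of an extreme two-atom measure down to its actual endpoints without leaving $\mathcal{V}^N[C]$ --- to be the only genuinely load-bearing step; everything else is assembling the pieces already available.
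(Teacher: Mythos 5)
Your argument is correct and follows essentially the same route as the paper: passing to the convex compact slice $\mathcal{D}^N[\vec{l}, \vec{r}; \vec{e}\,] \subset \mathcal{V}^N[C]$, noting that a subextreme point of $\mathcal{V}^N[C]$ lying in this convex subset is extreme there, factoring extremality over the product, invoking \cref{cor:ex_points_of_D_lre}, and trimming the right endpoints to $p_k^{(2)}$ while observing that $C$-compatibility survives because the left endpoints are untouched. Nothing to add.
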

\begin{proof}
Since $\vec{\mu} \in \mathcal{V}^N[C]$, by construction $\mathbb{E}(\mu_1) + \dots + \mathbb{E}(\mu_N) = C$ and there exists a $C$-compatible boundary $(\vec{l}, \vec{r}^{\,(0)})$ such that $\mu_k \in \mathcal{D}[l_k, r_k^{(0)}]$ for all $k$.

For $1 \le k \le N$, denote $e_k = \mathbb{E}(\mu_k)$. Consider the set \[\mathcal{D}^N[\vec{l}, \vec{r}^{\,(0)}; \vec{e}\,] = \mathcal{D}[l_1, r_1^{(0)}; e_1] \times \dots \times \mathcal{D}[l_N, r_N^{(0)}; e_N] \subset \mathcal{M}^N(\R).\]
The set $\mathcal{D}^N[\vec{l}, \vec{r}^{\,(0)}; \vec{e}\,]$ is a compact convex subset of $\mathcal{M}^N(\R)$ as a product of compact convex sets. In addition, it follows from the definition of $\mathcal{V}^N[C]$ that $\mathcal{D}^N[\vec{l}, \vec{r}^{\,(0)}; \vec{e}\,] \subset \mathcal{V}^N[C]$. Hence, since $\vec{\mu} \in \mathcal{D}^N[\vec{l}, \vec{r}^{\,(0)}; \vec{e}\,]$ and $\vec{\mu}$ is a subextreme point of $\mathcal{V}^N[C]$, we conclude that $\vec{\mu}$ is also an extreme point of $\mathcal{D}^N[\vec{l}, \vec{r}^{\,(0)}; \vec{e}\,]$.

The $N$-tuple of measures $\vec{\mu}$ is an extreme point of $\mathcal{D}^N[\vec{l}, \vec{r}^{\,(0)}; \vec{e}\,]$ if and only if $\mu_k \in \mathrm{ex}\,\mathcal{D}[l_k, r_k^{(0)}; e_k]$ for all $k = 1, \dots, N$. Thus, by \cref{cor:ex_points_of_D_lre} for each $k = 1, \dots, N$ there exist points $p_k^{(1)}$ and $p_k^{(2)}$ and a real number $\alpha_k \in (0, 1)$ such that $l_k \le p_k^{(1)} \le p_k^{(2)} \le r_k^{(0)}$ and $\mu_k = \alpha_k\lambda[l_k, p_k^{(1)}] + (1 - \alpha_k)\lambda[l_k, p_k^{(2)}]$.

Put $r_k = p_k^{(2)}$ and $p_k = p_k^{(1)}$. Since $l_k \le r_k \le r_k^{(0)}$ for all $k$ and $(\vec{l}, \vec{r}^{\,(0)})$ is a $C$-compatible boundary, we can easily verify that $(\vec{l}, \vec{r})$ is also a $C$-compatible boundary. Thus, since $l_k \le p_k \le r_k$ for all $k$ and $\mathbb{E}(\mu_1) + \dots + \mathbb{E}(\mu_N) = C$, we conclude that $\vec{\mu}$ is a step $C$-compatible tuple.
\end{proof}
Let us verify the following simple inequalities.
\begin{proposition}\label{prop:C_le_mean_sum}
If $\vec{\mu}$ is a step $C$-compatible tuple, then
\[
C \le \frac{l_1 + r_1}{2} + \dots + \frac{l_N + r_N}{2}.
\]
Moreover, if the equality is achieved, then $p_k = r_k$ and $\mu_k = \lambda[l_k, r_k]$ for all $k = 1, \dots, N$.
\end{proposition}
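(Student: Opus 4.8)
The plan is to reduce everything to the elementary identity $\mathbb{E}(\lambda[a, b]) = (a + b)/2$, valid for all $a \le b$ — including the degenerate case $a = b$, where $\lambda[a, a] = \delta(a)$ and both sides equal $a$. First I would apply this identity to each summand of the decomposition $\mu_k = \alpha_k\lambda[l_k, p_k] + (1 - \alpha_k)\lambda[l_k, r_k]$ and use linearity of $\mathbb{E}$ to obtain
\[
\mathbb{E}(\mu_k) = \alpha_k\frac{l_k + p_k}{2} + (1 - \alpha_k)\frac{l_k + r_k}{2}.
\]
Since $p_k \le r_k$ and $\alpha_k > 0$, the first term is at most $\alpha_k(l_k + r_k)/2$, hence $\mathbb{E}(\mu_k) \le (l_k + r_k)/2$ for every $k$.

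Next I would sum these $N$ inequalities and invoke the defining property $\mathbb{E}(\mu_1) + \dots + \mathbb{E}(\mu_N) = C$ of a step $C$-compatible tuple, which immediately yields
\[
C = \sum_{k = 1}^N\mathbb{E}(\mu_k) \le \sum_{k = 1}^N\frac{l_k + r_k}{2}.
\]
This proves the first assertion.

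For the equality case, the observation is that a finite sum of real numbers, each bounded above by a fixed value, equals the sum of those values only when every summand attains its bound. So equality in the displayed inequality forces $\mathbb{E}(\mu_k) = (l_k + r_k)/2$ for all $k$. Substituting this back into the formula for $\mathbb{E}(\mu_k)$ and cancelling the common term gives $\alpha_k(l_k + p_k)/2 = \alpha_k(l_k + r_k)/2$; since $\alpha_k > 0$ we may divide by $\alpha_k$ to conclude $p_k = r_k$. Then $\mu_k = \alpha_k\lambda[l_k, r_k] + (1 - \alpha_k)\lambda[l_k, r_k] = \lambda[l_k, r_k]$, as claimed.

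There is no real obstacle here; the only point requiring a moment's care is that the strict inequality $\alpha_k > 0$ in the definition of a step $C$-compatible tuple is exactly what licenses the cancellation in the equality case — without it one could not rule out $p_k < r_k$. It is also worth recording the degenerate-case check $\mathbb{E}(\lambda[l, l]) = l = (l + l)/2$, so that the formula $\mathbb{E}(\lambda[a, b]) = (a + b)/2$ is applied uniformly across all $l_k \le p_k \le r_k$.
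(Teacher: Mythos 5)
Your proof is correct and follows essentially the same route as the paper: compute $\mathbb{E}(\mu_k) = \alpha_k\frac{l_k+p_k}{2} + (1-\alpha_k)\frac{l_k+r_k}{2} \le \frac{l_k+r_k}{2}$, sum over $k$ using $\sum_k \mathbb{E}(\mu_k) = C$, and in the equality case use $\alpha_k > 0$ to force $p_k = r_k$ and hence $\mu_k = \lambda[l_k, r_k]$. No differences worth noting.
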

\begin{proof}
We have $\mu_k = \alpha_k\lambda[l_k, p_k] + (1 - \alpha_k)\lambda[l_k, r_k]$ for all $k$. Since the function $\mathbb{E}$ is linear and $\mathbb{E}(\lambda[a, b]) = (a + b) / 2$, we have
\begin{equation}\label{eq:E_mu_k_value}
\mathbb{E}(\mu_k) = \alpha_k \frac{l_k + p_k}{2} + (1 - \alpha_k)\frac{l_k + r_k}{2} = \frac{l_k + \alpha_kp_k + (1 - \alpha_k)r_k}{2} \le \frac{l_k + r_k}{2}.
\end{equation}
Summarizing these inequalities for all $k$, we get
\[
C = \mathbb{E}(\mu_1) + \dots + \mathbb{E}(\mu_N) \le \frac{l_1 + r_1}{2} + \dots + \frac{l_N + r_N}{2}.
\]

Assume that the equality holds. Then $\mathbb{E}(\mu_k) = (l_k + r_k) / 2$ for all $k$. Substituting this into equation \cref{eq:E_mu_k_value}, we conclude that $\alpha_kp_k + (1 - \alpha_k)r_k = r_k$, and therefore, since $\alpha_k > 0$, we have $p_k = r_k$ and $\mu_k = \alpha_k\lambda[l_k, r_k] + (1 - \alpha_k)\lambda[l_k, r_k] = \lambda[l_k, r_k]$ for all $k$.
\end{proof}

\begin{proposition}\label{prop:sum_r_ge_C}
If $\vec{\mu}$ is a step $C$-compatible tuple, then the inequality
\[
r_1 + \dots + r_N \ge C + (r_k - l_k)
\]
holds for all $k = 1, \dots, N$. Moreover, if the equality is achieved for some $m$, then $\mu_k = \lambda[l_k, r_k]$ for all $k$ and $r_m - l_m = \sum_{i \ne m}(r_i - l_i)$.
\end{proposition}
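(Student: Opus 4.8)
The plan is to obtain the inequality by simply chaining two facts already in hand: the defining inequality of a $C$-compatible boundary and \cref{prop:C_le_mean_sum}. First I would rewrite \cref{prop:C_le_mean_sum} in the equivalent form $r_1 + \dots + r_N \ge 2C - (l_1 + \dots + l_N)$, which comes from multiplying the inequality $C \le \sum_{i}\frac{l_i + r_i}{2}$ by $2$ and rearranging. Since $\vec{\mu}$ is a step $C$-compatible tuple, the associated boundary $(\vec{l}, \vec{r})$ is a $C$-compatible boundary, so for the fixed index $k$ we have $r_k - l_k \le C - (l_1 + \dots + l_N)$, which can be rewritten as $2C - (l_1 + \dots + l_N) \ge C + (r_k - l_k)$. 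Combining these two displays yields $r_1 + \dots + r_N \ge C + (r_k - l_k)$, as claimed.

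For the equality statement, suppose $r_1 + \dots + r_N = C + (r_m - l_m)$ for some index $m$. Then both inequalities used in the chain above must in fact be equalities. Equality in the first one means $C = \sum_{i}\frac{l_i + r_i}{2}$, so by the "moreover" part of \cref{prop:C_le_mean_sum} we get $p_k = r_k$ and $\mu_k = \lambda[l_k, r_k]$ for all $k = 1, \dots, N$. Equality in the second one means $C - (l_1 + \dots + l_N) = r_m - l_m$; substituting the relation $C = \sum_{i}\frac{l_i + r_i}{2}$ and simplifying gives $\sum_{i}(r_i - l_i) = 2(r_m - l_m)$, hence $\sum_{i \ne m}(r_i - l_i) = r_m - l_m$, completing the argument.

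I do not expect any genuine obstacle here: the whole proof is a short manipulation of inequalities already established, and the only point requiring a little care is keeping track of which of the two inequalities in the chain becomes tight in the equality case and then invoking the second assertion of \cref{prop:C_le_mean_sum} correctly.
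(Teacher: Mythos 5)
Your proposal is correct and follows essentially the same route as the paper: chain the rearranged inequality of \cref{prop:C_le_mean_sum} with the $C$-compatible boundary inequality, and in the equality case observe that both links are tight, invoking the "moreover" part of \cref{prop:C_le_mean_sum} and a short substitution to get $r_m - l_m = \sum_{i \ne m}(r_i - l_i)$.
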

\begin{proof}
It follows from \cref{prop:C_le_mean_sum} that
\[
r_1 + \dots + r_N \ge 2C - (l_1 + \dots + l_N).
\]
In addition, since $(\vec{l}, \vec{r})$ is a $C$-compatible boundary, we have
\[
C - (l_1 + \dots + l_N) \ge r_k - l_k
\]
for all $k = 1, \dots, N$. Thus, we conclude that
\[
r_1 + \dots + r_N \ge 2C - (l_1 + \dots + l_N) \ge C + (r_k - l_k).
\]

Assume that the equality holds for some $m$. Then $C = (l_1 + r_1) / 2 + \dots + (l_N + r_N) / 2$ and $r_m - l_m = C - (l_1 + \dots + l_N)$. Substituting the first one into the second one, we get
\[
r_m - l_m = \frac{l_1 + r_1}{2} + \dots + \frac{l_N + r_N}{2} - (l_1 + \dots + l_N) = \frac{r_1 - l_1}{2} + \dots + \frac{r_N - l_N}{2},
\]
or equivalently, $r_m - l_m = \sum_{i \ne m}(r_i - l_i)$. Finally, it follows from \cref{prop:C_le_mean_sum} that $\mu_k = \lambda[l_k, r_k]$ for all $k$.
\end{proof}
\subsection{Full description of the subextreme points of \texorpdfstring{$V^N[C]$}{VN[C]}}
We prove in \cref{prop:2_ex_of_V_C_is_step_C_tuple} that if $\vec{\mu}$ is a subextreme point of $\mathcal{V}^N[C]$, then $\vec{\mu}$ is a step $C$-compatible tuple. This condition is not sufficient; in what follows, we find additional conditions on $\vec{\mu}$ to be a subextreme point of $\mathcal{V}^N[C]$.

In \cref{prop:sum_r_ge_C} we proved that if $\vec{\mu}$ is a step $C$-compatible tuple, then the inequality $r_1 + \dots + r_N \ge C + (r_m - l_m)$ holds for all $m = 1, \dots, N$. In the following proposition we prove that if $\vec{\mu}$ is a subextreme point of $\mathcal{V}^N[C]$ and if $\vec{\mu}$ is not a tuple of Dirac measures, then the inequality is strict.
\begin{proposition}\label{prop:r_sum_eq_not_2ex}
If $\vec{\mu}$ is a step $C$-compatible tuple and if there exists an index $m = 1, \dots, N$ such that $l_m < r_m$ and $r_1 + \dots + r_N = C + (r_m - l_m)$, then $\vec{\mu}$ is not a subextreme point of $\mathcal{V}^N[C]$.
\end{proposition}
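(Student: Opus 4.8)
The plan is to first use \cref{prop:sum_r_ge_C} to reduce to a completely explicit situation, and then exhibit $\vec{\mu}$ as the average of two explicit tuples of $\mathcal{V}^N[C]$. Since $\vec{\mu}$ is a step $C$-compatible tuple for which $r_1 + \dots + r_N = C + (r_m - l_m)$, the ``moreover'' part of \cref{prop:sum_r_ge_C} gives $\mu_k = \lambda[l_k, r_k]$ for every $k$ and $r_m - l_m = \sum_{i \ne m}(r_i - l_i)$. Because all $\mu_k$ are uniform, $C = \sum_k \mathbb{E}(\mu_k) = \sum_k \tfrac{l_k + r_k}{2}$, which combined with the previous relation also yields $C - (l_1 + \dots + l_N) = r_m - l_m$. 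I abbreviate $L_i = r_i - l_i$ and write $c_i = \tfrac{l_i + r_i}{2}$ for the midpoint of $[l_i, r_i]$; the two identities used throughout are $L_m = \sum_{i \ne m} L_i$ and $C - \sum_i l_i = L_m$, and in particular $L_i \le L_m$ for every $i$.

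Next I build the decomposition by cutting each uniform measure at its midpoint: $\lambda[l_i, r_i] = \tfrac12\lambda[l_i, c_i] + \tfrac12\lambda[c_i, r_i]$ (for a degenerate coordinate with $l_i = r_i$ this reads $\delta(l_i) = \tfrac12\delta(l_i) + \tfrac12\delta(l_i)$). I then form $\vec{\mu}^{+}$ by taking, at coordinate $m$, the upper half $\lambda[c_m, r_m]$ and, at every other coordinate, the lower half $\lambda[l_i, c_i]$; and $\vec{\mu}^{-}$ by taking the complementary halves. By the midpoint-cut identity $\mu_k = \tfrac12\mu_k^{+} + \tfrac12\mu_k^{-}$ for each $k$, hence $\vec{\mu} = \tfrac12\vec{\mu}^{+} + \tfrac12\vec{\mu}^{-}$; and $\vec{\mu}^{+} \ne \vec{\mu}^{-}$, since at coordinate $m$ we have $\lambda[c_m, r_m] \ne \lambda[l_m, c_m]$, using the hypothesis $l_m < r_m$ (so $l_m < c_m < r_m$).

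It remains to check $\vec{\mu}^{+}, \vec{\mu}^{-} \in \mathcal{V}^N[C]$; by the obvious symmetry I only spell out $\vec{\mu}^{+}$. For the first moment, $\mathbb{E}(\lambda[c_m, r_m]) = c_m + \tfrac{L_m}{4}$ and $\mathbb{E}(\lambda[l_i, c_i]) = c_i - \tfrac{L_i}{4}$, so $\sum_k \mathbb{E}(\mu_k^{+}) = \big(\sum_k \tfrac{l_k+r_k}{2}\big) + \tfrac14\big(L_m - \sum_{i\ne m} L_i\big) = C$ by the identity $L_m = \sum_{i\ne m} L_i$. For the support constraint I take the boundary $l_m^{+} = c_m$, $r_m^{+} = r_m$ and $l_i^{+} = l_i$, $r_i^{+} = c_i$ for $i \ne m$; then each $\mu_k^{+}$ is uniform on $[l_k^{+}, r_k^{+}]$ (or a Dirac measure at $l_k^{+}$), hence lies in $\mathcal{D}[l_k^{+}, r_k^{+}]$. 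Since $\sum_k l_k^{+} = \sum_k l_k + \tfrac{L_m}{2}$, we get $C - \sum_k l_k^{+} = \tfrac{L_m}{2}$, while $r_m^{+} - l_m^{+} = \tfrac{L_m}{2}$ and $r_i^{+} - l_i^{+} = \tfrac{L_i}{2} \le \tfrac{L_m}{2}$ for $i \ne m$; thus $(\vec{l}^{+}, \vec{r}^{+})$ is a $C$-compatible boundary and $\vec{\mu}^{+} \in \mathcal{V}^N[C]$. The mirror-image computation gives $\vec{\mu}^{-} \in \mathcal{V}^N[C]$, so $\vec{\mu}$ is not a subextreme point of $\mathcal{V}^N[C]$.

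The only genuinely nontrivial point in this plan is the choice of perturbation: redistributing mass at just two coordinates cannot preserve the total first moment unless those coordinates happen to have equal length, so one is forced to move mass at all non-degenerate coordinates simultaneously, and the bookkeeping closes up precisely because the hypothesis forces $L_m = \sum_{i\ne m} L_i$. Once this is seen, the $C$-compatibility of the two new boundaries is immediate from $L_i \le L_m$, and everything reduces to the elementary arithmetic above.
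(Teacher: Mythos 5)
Your proof is correct and is essentially the paper's own argument: after invoking \cref{prop:sum_r_ge_C}, the paper also splits each $\lambda[l_k,r_k]$ at its midpoint and forms the same two complementary tuples (upper half at $m$, lower halves elsewhere, and vice versa), differing only in that it certifies membership in $\mathcal{V}^N[C]$ via \cref{cor:lebesgue_tuple_in_V_C_eq} instead of checking the $C$-compatible boundary directly, which is the same arithmetic.
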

\begin{proof}
Since $r_1 + \dots + r_N = C + (r_m - l_m)$, it follows from \cref{prop:sum_r_ge_C} that $\mu_k = \lambda[l_k, r_k]$ for all $k$ and $r_m - l_m = \sum_{i \ne m}(r_i - l_i)$. In addition, since $\mathbb{E}(\mu_k) = (l_k + r_k) / 2$, we have $C = (l_1 + r_1) / 2 + \dots + (l_N + r_N) / 2$.

Denote $t_k = (l_k + r_k) / 2$. Consider the $N$-tuples of probability measures $\vec{\mu}^{\,(1)}$ and $\vec{\mu}^{\,(2)}$ defined as follows:
\begin{align*}
    \mu^{(1)}_k = \begin{cases}
    \lambda[l_m, t_m] &\text{if $k = m$},\\
    \lambda[t_k, r_k] &\text{if $k \ne m$};
    \end{cases}
    && \mu^{(2)}_k = \begin{cases}
    \lambda[t_m, r_m] &\text{if $k = m$},\\
    \lambda[l_k, t_k] &\text{if $k \ne m$}.
    \end{cases}
\end{align*}
Since $\lambda[l_k, r_k] = \frac{1}{2}\lambda[l_k, t_k] + \frac{1}{2}\lambda[t_k, r_k]$ for all $k$, we have $\vec{\mu} = \frac{1}{2}\vec{\mu}^{\,(1)} + \frac{1}{2}\vec{\mu}^{\,(2)}$. In addition, since $l_m < r_m$, we have $\lambda[l_m, t_m] \ne \lambda[t_m, r_m]$, and therefore $\vec{\mu}^{\,(1)} \ne \vec{\mu}^{\,(2)}$.

Let us verify that $\vec{\mu}^{\,(1)}$ satisfies all the assumptions of \cref{cor:lebesgue_tuple_in_V_C_eq}. We have
\begin{align*}
\frac{l_m + t_m}{2} + \sum_{i \ne m}\frac{t_i + r_i}{2} &= \frac{t_1 + \dots + t_N}{2} + \frac{r_1 + \dots + r_N - (r_m - l_m)}{2}\\
&= \frac{1}{2}\left(\frac{l_1 + r_1}{2} + \dots + \frac{l_N + r_N}{2}\right) + \frac{1}{2}C = \frac{1}{2}C + \frac{1}{2}C = C.
\end{align*}
In addition, since $r_m - l_m = \sum_{i \ne m}(r_i - l_i)$, we have
\[
t_m - l_m = \frac{r_m - l_m}{2} = \sum_{i \ne m}\frac{r_i - l_i}{2} = \sum_{i \ne m}(r_i - t_i).
\]
Thus, the $N$-tuple of measures $\vec{\mu}^{\,(1)}$ satisfies all the assumptions of \cref{cor:lebesgue_tuple_in_V_C_eq}, and therefore $\vec{\mu}^{\,(1)} \in \mathcal{V}^N[C]$.

Let us verify in the same manner that $\vec{\mu}^{\,(2)}$ also satisfies all the assumptions of \cref{cor:lebesgue_tuple_in_V_C_eq}. We have
\begin{align*}
l_1 + \dots + l_N + (r_m - l_m) &= l_1 + \dots + l_N + \sum_{i \ne m}(r_i - l_i) \\
&= l_m + \sum_{i \ne m}r_i = r_1 + \dots + r_N - (r_m - l_m) = C,
\end{align*}
and therefore
\[
\frac{t_m + r_m}{2} + \sum_{i \ne m}\frac{l_i + t_i}{2} = \frac{t_1 + \dots + t_N}{2} + \frac{l_1 + \dots + l_N + (r_m - l_m)}{2} = C.
\]
Finally, we have $r_m - t_m = \sum_{i \ne m}(t_i - l_i)$, and therefore $\vec{\mu}^{\,(2)}$ satisfies all the assumptions of \cref{cor:lebesgue_tuple_in_V_C_eq} and $\vec{\mu}^{\,(2)}$ is also contained in $\mathcal{V}^N[C]$. Thus, since $\vec{\mu} = \frac{1}{2}\vec{\mu}^{\,(1)} + \frac{1}{2}\vec{\mu}^{\,(2)}$ and $\vec{\mu}^{\,(1)} \ne \vec{\mu^{\,(2)}}$, we conclude that $\vec{\mu}$ is not a subextreme point of $\mathcal{V}^N[C]$.
\end{proof}
Next we prove that if $\vec{\mu}$ is a subextreme point of $\mathcal{V}^N[C]$, then $\mu_k$ is equal to $\lambda[l_k, r_k]$ for all indices $k = 1, \dots, N$ except for at most one.
\begin{proposition} \label{prop:pk_equal_rk}
If $\vec{\mu}$ is a step $C$-compatible tuple and if $\vec{\mu}$ is a subextreme point of $\mathcal{V}^N[C]$, then $p_k = r_k$ for all $k = 1, \dots, N$ except for at most one.
\end{proposition}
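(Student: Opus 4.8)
The plan is to argue by contradiction. Suppose $\vec{\mu}$ is a step $C$-compatible tuple with two distinct indices $i \ne j$ such that $p_i < r_i$ and $p_j < r_j$; I will exhibit an explicit nontrivial convex decomposition $\vec{\mu} = \tfrac12\vec{\mu}^{\,(1)} + \tfrac12\vec{\mu}^{\,(2)}$ with $\vec{\mu}^{\,(1)}, \vec{\mu}^{\,(2)} \in \mathcal{V}^N[C]$ and $\vec{\mu}^{\,(1)} \ne \vec{\mu}^{\,(2)}$, contradicting $\vec{\mu} \in \mathrm{se}\,\mathcal{V}^N[C]$.

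The idea is to perturb only the weights $\alpha_i$ and $\alpha_j$ in the representations $\mu_k = \alpha_k\lambda[l_k, p_k] + (1 - \alpha_k)\lambda[l_k, r_k]$, leaving all other components untouched and keeping the same boundary $(\vec{l}, \vec{r})$. Concretely, for small $\varepsilon, \eta > 0$ I set
\[
\mu^{(1)}_i = (\alpha_i + \varepsilon)\lambda[l_i, p_i] + (1 - \alpha_i - \varepsilon)\lambda[l_i, r_i], \qquad
\mu^{(1)}_j = (\alpha_j - \eta)\lambda[l_j, p_j] + (1 - \alpha_j + \eta)\lambda[l_j, r_j],
\]
with $\mu^{(1)}_k = \mu_k$ for $k \notin \{i, j\}$, and define $\vec{\mu}^{\,(2)}$ by flipping the signs of $\varepsilon$ and $\eta$. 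Since $0 < \alpha_i, \alpha_j < 1$, for $\varepsilon, \eta$ small enough all the perturbed weights lie in $(0,1)$, so $\mu^{(1)}_k, \mu^{(2)}_k \in \mathcal{D}[l_k, r_k]$ for every $k$, and by construction $\vec{\mu} = \tfrac12(\vec{\mu}^{\,(1)} + \vec{\mu}^{\,(2)})$. The decomposition is nontrivial because $p_i < r_i$ forces $\lambda[l_i, p_i] \ne \lambda[l_i, r_i]$, hence $\mu^{(1)}_i \ne \mu^{(2)}_i$.

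It remains to check that $\vec{\mu}^{\,(1)}, \vec{\mu}^{\,(2)} \in \mathcal{V}^N[C]$, i.e. that the first moments still sum to $C$; the boundary $(\vec{l}, \vec{r})$ is already a $C$-compatible boundary coming from $\vec{\mu}$. Using $\mathbb{E}(\lambda[a,b]) = (a+b)/2$, the perturbation changes $\mathbb{E}(\mu_i)$ by $-\varepsilon(r_i - p_i)/2$ and $\mathbb{E}(\mu_j)$ by $+\eta(r_j - p_j)/2$ in $\vec{\mu}^{\,(1)}$ (and by the opposite amounts in $\vec{\mu}^{\,(2)}$), so the total moment is preserved precisely when $\varepsilon(r_i - p_i) = \eta(r_j - p_j)$. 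Since $r_i - p_i > 0$ and $r_j - p_j > 0$, this relation only fixes the ratio $\varepsilon/\eta$, so one can take $\varepsilon, \eta$ simultaneously positive and as small as needed; this yields the required decomposition and completes the proof. There is no genuine obstacle here — the only care needed is the bookkeeping of keeping the perturbed weights in $(0,1)$ and solving the moment-balance equation — the conceptual point being simply that two non-degenerate coordinates provide a free direction along which $\sum_k \mathbb{E}(\mu_k) = C$ can be maintained while moving inside $\mathcal{V}^N[C]$.
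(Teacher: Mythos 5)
Your proof is correct and is essentially the paper's own argument: the paper likewise perturbs only the two weights $\alpha_{N-1}, \alpha_N$ in opposite directions, choosing the perturbation sizes $\eps_{N-1} = t(r_{N-1}-p_{N-1})$, $\eps_N = t(r_N-p_N)$ so that $\eps_N(r_{N-1}-p_{N-1}) = \eps_{N-1}(r_N-p_N)$, which is exactly your moment-balance condition $\varepsilon(r_i-p_i) = \eta(r_j-p_j)$. The remaining checks (weights staying in $(0,1)$, same $C$-compatible boundary, nontriviality from $\lambda[l_i,p_i] \ne \lambda[l_i,r_i]$) match the paper's as well.
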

\begin{proof}
Let $\vec{\mu}$ be a step $C$-compatible tuple such that $p_k < r_k$ for at least two indices $k$. Without loss of generality we may assume that $p_{N - 1} < r_{N - 1}$ and $p_N < r_N$.

We have $\mu_{N - 1} = \alpha_{N - 1}\lambda[l_{N - 1}, p_{N - 1}] + (1 - \alpha_{N - 1})\lambda[l_{N - 1}, r_{N - 1}]$ and $\mu_N = \alpha_N\lambda[l_N, p_N] + (1 -\alpha_N)\lambda[l_N, r_N]$. Since $\alpha_{N - 1}, \alpha_N \in (0, 1)$, there exists a positive real number $t$ such that $t(r_{N - 1} - p_{N - 1}) < \min(\alpha_N, 1 - \alpha_N)$ and $t(r_N - p_N) < \min(\alpha_{N - 1}, 1 - \alpha_{N - 1})$. Then denote $\eps_{N - 1} = t(r_{N - 1} - p_{N - 1})$ and $\eps_{N} = t(r_N - p_N)$. Consider the following $N$-tuples of probability measures $\vec{\mu}^{\,(1)}$ and $\vec{\mu}^{\,(2)}$:
\begin{align*}
    \mu_k^{(1)} &= \begin{cases}
    \alpha_k\lambda[l_k, p_k] + (1 - \alpha_k)\lambda[l_k, r_k] &\text{if $k \le N - 2$},\\
    (\alpha_{N - 1} - \eps_N)\lambda[l_{N - 1}, p_{N - 1}] + (1 - \alpha_{N - 1} + \eps_N)\lambda[l_{N - 1}, r_{N - 1}] &\text{if $k = N - 1$},\\
    (\alpha_N + \eps_{N - 1})\lambda[l_N, p_N] + (1 - \alpha_N - \eps_{N - 1})\lambda[l_N, r_N] &\text{if $k = N$},
    \end{cases}
    \intertext{and similarly}
    \mu_k^{(2)} &= \begin{cases}
    \alpha_k\lambda[l_k, p_k] + (1 - \alpha_k)\lambda[l_k, r_k] &\text{if $k \le N - 2$},\\
    (\alpha_{N - 1} + \eps_N)\lambda[l_{N - 1}, p_{N - 1}] + (1 - \alpha_{N - 1} - \eps_N)\lambda[l_{N - 1}, r_{N - 1}] &\text{if $k = N - 1$},\\
    (\alpha_N - \eps_{N - 1})\lambda[l_N, p_N] + (1 - \alpha_N + \eps_{N - 1})\lambda[l_N, r_N] &\text{if $k = N$},
    \end{cases}
\end{align*}

One can easily verify that $\vec{\mu} = \frac{1}{2}\vec{\mu}^{\,(1)} + \frac{1}{2}\vec{\mu}^{\,(2)}$. Since $\eps_{N - 1} \ne 0$ and $\lambda[l_N, p_N] \ne \lambda[l_N, r_N]$, we conclude that $\mu_N \ne \mu^{(1)}_N$, and therefore $\vec{\mu} \ne \vec{\mu}^{\,(1)}$. Hence, this convex combination is nontrivial.

Let us verify that both $N$-tuples $\vec{\mu}^{\,(1)}$ and $\vec{\mu}^{\,(2)}$ are contained in $\mathcal{V}^N[C]$. By construction, $\mu_k^{(1)} \in \mathcal{D}[l_k, r_k]$ for all $k = 1, \dots, N$. In addition, \begin{align*}
    \mathbb{E}(\mu_{N - 1}^{(1)}) = \mathbb{E}(\mu_{N - 1}) + \frac{\eps_N}{2}(r_{N - 1} - l_{N - 1}), && \mathbb{E}(\mu_N^{(1)}) = \mathbb{E}(\mu_N) - \frac{\eps_{N - 1}}{2}(r_N - p_N),
\end{align*}
and therefore, since $\eps_{N}(r_{N - 1} - p_{N - 1}) = \eps_{N - 1}(r_N - p_N) = t(r_{N - 1} - p_{N - 1})(r_N - p_N)$ and $\mathbb{E}(\mu^{(1)}_k) = \mathbb{E}(\mu_k)$ for all $k \le N - 2$, we conclude that
\[
\mathbb{E}(\mu_1^{(1)}) + \dots + \mathbb{E}(\mu_N^{(1)}) = \mathbb{E}(\mu_1) + \dots + \mathbb{E}(\mu_N) = C.
\]

Thus, $\vec{\mu}^{\,(1)} \in \mathcal{V}^N[C]$. One can prove in the same manner that $\vec{\mu}^{\,(2)}$ is also contained in $\mathcal{V}^N[C]$, and therefore $\vec{\mu}$ is not a subextreme point of $\mathcal{V}^N[C]$.
\end{proof}

The following condition on $\vec{\mu}$ to be a subextreme point of $\mathcal{V}^N[C]$ is technical. 
\begin{proposition}\label{prop:cut_from_the_right}
Let $\vec{\mu}$ be a step $C$-compatible tuple. Suppose that there exists an index $m = 1, \dots, N$ such that $r_k - l_k < C - (l_1 + \dots + l_N)$ for all $k \ne m$ and that
\begin{equation}\label{eq:C_strange_ineq}
C > \frac{l_1 + r_1}{2} + \dots + \frac{l_N + l_N}{2} - \frac{r_m - l_m}{2}.
\end{equation}
Then $\vec{\mu}$ is not a subextreme point of $\mathcal{V}^N[C]$.
\end{proposition}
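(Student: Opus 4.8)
The plan is to exhibit $\vec{\mu}$ as a nontrivial convex combination $\vec{\mu} = \alpha_m\vec{\mu}^{\,(1)} + (1 - \alpha_m)\vec{\mu}^{\,(2)}$ of two tuples lying in $\mathcal{V}^N[C]$. The $m$-th coordinate is handled by the canonical splitting already present in the definition of a step $C$-compatible tuple: put $\mu^{(1)}_m = \lambda[l_m, p_m]$ and $\mu^{(2)}_m = \lambda[l_m, r_m]$, so that $\mu_m = \alpha_m\mu^{(1)}_m + (1 - \alpha_m)\mu^{(2)}_m$. This is the ``cut from the right'' of the title: $\mu^{(1)}_m$ is $\mu_m$ with the part living on $[p_m, r_m]$ deleted, and its support is the shorter interval $[l_m, p_m]$. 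As in the proof of \cref{prop:C_le_mean_sum} one computes $\mathbb{E}(\mu^{(1)}_m) = \mathbb{E}(\mu_m) - D$ and $\mathbb{E}(\mu^{(2)}_m) = \mathbb{E}(\mu_m) + \tfrac{\alpha_m}{1 - \alpha_m}D$, where $D = \tfrac12(1 - \alpha_m)(r_m - p_m) \ge 0$.

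To keep the first moments summing to $C$, the coordinates $j \ne m$ have to be perturbed so that their first moments increase by a total of $D$ in $\vec{\mu}^{\,(1)}$; then, since $\mu_j = \alpha_m\mu^{(1)}_j + (1 - \alpha_m)\mu^{(2)}_j$ is forced, they decrease by exactly the matching total in $\vec{\mu}^{\,(2)}$. For $j \ne m$ with $p_j < r_j$ I would raise $\mathbb{E}(\mu^{(1)}_j)$ by decreasing the weight $\alpha_j$, keeping the support $[l_j, r_j]$, in the spirit of the proof of \cref{prop:pk_equal_rk}; for $j \ne m$ with $\mu_j = \lambda[l_j, r_j]$ I would instead set $\mu^{(1)}_j = \lambda[l_j + s_j, r_j]$ for a small $s_j > 0$, which raises the first moment and keeps the complement $\mu^{(2)}_j = (1 - \alpha_m)^{-1}\bigl(\mu_j - \alpha_m\lambda[l_j + s_j, r_j]\bigr)$ inside $\mathcal{D}[l_j, r_j]$ as long as $s_j \le (1 - \alpha_m)(r_j - l_j)$. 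Then both tuples have first moments summing to $C$, and $\vec{\mu}^{\,(1)} \ne \vec{\mu}^{\,(2)}$ because $p_m < r_m$ makes $\mu^{(1)}_m \ne \mu^{(2)}_m$. It remains to see that $\vec{\mu}^{\,(1)}, \vec{\mu}^{\,(2)} \in \mathcal{V}^N[C]$: for $\vec{\mu}^{\,(2)}$ this is immediate since all its coordinates are supported on the original intervals, so $(\vec{l}, \vec{r})$ is a $C$-compatible boundary for it; for $\vec{\mu}^{\,(1)}$ the right endpoint of the $m$-th interval drops to $p_m$, which frees $r_m - p_m$ of room in the constraints $r_k - l_k \le C - (l_1 + \dots + l_N)$, while the left endpoints are pushed right by a total of at most $2D = (1 - \alpha_m)(r_m - p_m)$; using the $C$-compatibility of $(\vec{l}, \vec{r})$ at index $m$ and the strict slack $r_j - l_j < C - (l_1 + \dots + l_N)$ for $j \ne m$, one checks that the perturbed boundary is again $C$-compatible.

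The point where the hypothesis \cref{eq:C_strange_ineq} is really used, and what I expect to be the main obstacle, is the claim that the required total increase $D$ can be allocated among the coordinates $j \ne m$ within both kinds of admissible windows simultaneously (and, when left-shifts are unavoidable, also within the slack budget). Rewriting \cref{eq:C_strange_ineq} as $\sum_{j = 1}^{N}\alpha_j(r_j - p_j) < r_m - l_m$ and unwinding the $C$-compatibility of $(\vec{l}, \vec{r})$ at $m$ via the identity $C - (l_1 + \dots + l_N) = \sum_{j = 1}^N\bigl[\tfrac12(r_j - l_j) - \tfrac12\alpha_j(r_j - p_j)\bigr]$, one obtains precisely the inequality bounding $2D = (1 - \alpha_m)(r_m - p_m)$ by the sum of the available increments on the coordinates $j \ne m$, so that a valid choice of the $s_j$ and of the weight changes exists; this, together with the routine verification that each $\mu^{(i)}_k$ is a genuine element of the relevant set $\mathcal{D}[\cdot, \cdot]$, finishes the argument. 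Finally, the degenerate case $p_m = r_m$ (where $\mu_m = \lambda[l_m, r_m]$ and $D = 0$) must be dispatched separately by a simpler perturbation — e.g.\ a symmetric left-shift of two of the intervals using the slack, or reduction to \cref{prop:r_sum_eq_not_2ex} — the relevant compatibility relations being consistent only under the stated hypotheses.
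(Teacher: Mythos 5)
There is a genuine gap, and it is exactly at the step you flag as the main obstacle: the claimed budget inequality is false once you pin the mixing weight of the convex combination to $\alpha_m$ and restrict the compensating moves to (i) decreasing the weight $\alpha_j$ for coordinates with $p_j<r_j$ and (ii) left-shifts $\lambda[l_j+s_j,r_j]$ for uniform coordinates. With your decomposition the coordinates $j\ne m$ must gain a total of $D=\tfrac12(1-\alpha_m)(r_m-p_m)$ in first moment inside $\vec{\mu}^{\,(1)}$, but a weight decrease on coordinate $j$ can raise $\mathbb{E}(\mu^{(1)}_j)$ by at most $\tfrac12\alpha_j(r_j-p_j)$, and \cref{eq:C_strange_ineq}, rewritten (correctly, as you do) as $\sum_k\alpha_k(r_k-p_k)<r_m-l_m$, in no way dominates $D$: it bounds the \emph{wrong} quantity. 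Concretely, take $N=3$, $m=3$, $l_1=l_2=l_3=0$, $r_1=r_2=0.9$, $r_3=1$, $p_1=p_2=0.8$, $p_3=0$, $\alpha_1=\alpha_2=0.5$, $\alpha_3=0.1$, so $C=1.3$. All hypotheses of \cref{prop:cut_from_the_right} hold ($0.9<1.3$ for $k\ne3$, and $1.3>1.4-0.5$), there are no uniform coordinates $j\ne m$ so only move (i) is available, yet $D=0.45$ while the total attainable increment is at most $\tfrac12\alpha_1(r_1-p_1)+\tfrac12\alpha_2(r_2-p_2)=0.05$. So your scheme cannot produce $\vec{\mu}^{\,(1)}\in\mathcal{V}^N[C]$ in this case; no choice of the $s_j$ and weight changes exists. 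In addition, the case $p_m=r_m$ is not a removable degeneracy: in the application inside \cref{thm:2_ex_of_V_C} the index $m$ is the coordinate with the longest interval, which may well carry the uniform measure while the genuine two-step coordinate is elsewhere, and the suggested reduction to \cref{prop:r_sum_eq_not_2ex} is unavailable because that proposition needs the equality $r_1+\dots+r_N=C+(r_m-l_m)$, which generically fails here.

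The fix is essentially the paper's construction, which differs from yours in two crucial ways. First, the mixing weight is a free small parameter $\alpha$ (not $\alpha_m$), so the remainder $\vec{\mu}^{\,(2)}=(\vec{\mu}-\alpha\vec{\mu}^{\,(1)})/(1-\alpha)$ stays in the relevant $\mathcal{D}$--sets by a crude density comparison. Second, the compensating measures on the coordinates $k\ne m$ are uniform measures $\lambda[\zeta_k(t),r_k]$ on right subintervals, whose first moments can be pushed close to $r_k$ — this is where the missing capacity comes from (in the example above, such pieces easily supply the needed $0.45$) — while the $m$-th coordinate contributes a small uniform piece $\lambda[l_m,l_m+t]$ at the left end. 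The delicate allocation you hoped to extract from \cref{eq:C_strange_ineq} is what \cref{lem:xi_tuple_exists} and \cref{lem:zeta_tuple_exists} actually do: the hypothesis enters through the strict inequality $r_1+\dots+r_{N-1}+l_N>C>\sum_{k\le N-1}\tfrac{l_k+r_k}{2}+l_N$ (after excluding the equality case via \cref{prop:r_sum_eq_not_2ex}), which guarantees a choice of right-cut lengths $\xi_k$ with $2\xi_k\le\sum_i\xi_i$, making the cut tuple $C$-compatible; the cases $p_m=l_m$ and $l_m<p_m\le r_m$ are then handled uniformly. As written, your proposal does not close, and the obstruction is structural (the fixed weight $\alpha_m$ and the shape of your $\mu^{(1)}_j$), not merely a missing computation.
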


To prove this statement, we first verify the following lemmas.

\begin{lemma}\label{lem:xi_tuple_exists}
Let $(\vec{l}, \vec{r})$ be a $C$-compatible boundary. Suppose that the inequality $r_k - l_k < C - (l_1 + \dots +l_N)$ holds for all $k \le N - 1$ and that
\[
r_1 + \dots + r_{N - 1} + l_N > C > \frac{l_1 + r_1}{2} + \dots + \frac{l_{N - 1} + r_{N - 1}}{2} + l_N.
\]
Then there exists a tuple of real numbers $\{\xi_k\}_{k = 1}^{N - 1}$ that satisfies the following conditions:
\begin{conditions}
    \item\label{cond:xi_bounds} the inequality $0 \le \xi_k \le r_k - l_k$ holds for all $k \le N - 1$;
    \item\label{cond:xi_strictly_positive} if $l_k < r_k$, then $\xi_k > 0$;
    \item\label{cond:xi_form_broken_line} the inequality $2\xi_k \le \xi_1 + \dots + \xi_{N - 1}$ holds for all $k \le N - 1$;
    \item\label{cond:xi_strange_ineq} $r_1 + \dots + r_{N - 1} + l_N < C + (\xi_1 + \dots + \xi_{N - 1}) / 2$.
\end{conditions}
\end{lemma}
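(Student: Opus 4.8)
\noindent\emph{Proof strategy.} The plan is to recast the four required conditions as elementary inequalities among the gaps $d_k := r_k - l_k$ ($1 \le k \le N-1$), their sum $D := d_1 + \dots + d_{N-1}$, and the number $S := r_1 + \dots + r_{N-1} + l_N$, and then to exhibit the tuple $\{\xi_k\}$ explicitly. First I would unpack the hypothesis: since $\frac{l_k + r_k}{2} = l_k + d_k/2$ and $l_1 + \dots + l_N = S - D$, the right-hand inequality of the hypothesis is equivalent to $S - C < D/2$ and the left-hand one to $S - C > 0$, so that
\[
0 < 2(S-C) < D ,
\]
and in particular $D > 0$. Moreover the strict $C$-compatibility assumption reads $d_k < C - (l_1 + \dots + l_N) = D - (S-C)$ for every $k \le N-1$, which together with $S - C > 0$ forces $d_k < D$. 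Writing $\Xi := \xi_1 + \dots + \xi_{N-1}$, condition~\cref{cond:xi_strange_ineq} is precisely $\Xi > 2(S-C)$.

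Next I would relabel the indices so that $d_1 = \max_{k \le N-1} d_k$ — harmless, since all four target conditions and the hypothesis are symmetric in $1, \dots, N-1$ — and split into two cases. In the balanced case $2 d_1 \le D$, set $\xi_k := d_k$ for every $k$: then \cref{cond:xi_bounds} and \cref{cond:xi_strictly_positive} are immediate, \cref{cond:xi_form_broken_line} reduces to $2 d_k \le D$ and follows from $d_k \le d_1$ and the case hypothesis, and \cref{cond:xi_strange_ineq} holds because $\Xi = D > 2(S-C)$. In the dominant case $2 d_1 > D$, set $\xi_1 := D - d_1 = \sum_{k \ge 2} d_k$ and $\xi_k := d_k$ for $2 \le k \le N-1$, so that $\Xi = 2(D - d_1)$: here $0 \le \xi_1 \le d_1$ because $D - d_1 \le d_1$ is the case hypothesis and $\xi_1 > 0$ because $d_1 < D$, so \cref{cond:xi_bounds} and \cref{cond:xi_strictly_positive} hold; the polygon inequality \cref{cond:xi_form_broken_line} holds with equality at $k = 1$ and for $k \ge 2$ amounts to $d_k \le \sum_{j \ge 2} d_j$; and \cref{cond:xi_strange_ineq}, namely $2(D - d_1) > 2(S-C)$, is exactly the $C$-compatibility inequality $d_1 < D - (S-C)$ derived above.

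The points that need a little care are the degenerate configurations which would spoil the formulas — $d_1 = 0$, or all of $d_2, \dots, d_{N-1}$ vanishing in the dominant case, or $N = 2$ — but in each of them the chain $0 < S - C < D/2$ collapses (for $N = 2$ the hypothesis $r_1 + l_2 > C$ directly contradicts $r_1 - l_1 < C - l_1 - l_2$), so they never occur. I expect the main obstacle to be choosing the right case split and recognising that the single inequality $2(S-C) < D$ supplies exactly the slack needed to push $\Xi$ past $2(S-C)$ in the balanced case, just as $d_1 < D - (S-C)$ does in the dominant case; once the translation to $d_k$, $D$, $S$ is in place, the remaining verifications are routine.
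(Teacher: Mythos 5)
Your proof is correct and follows essentially the same route as the paper: the identical case split on whether $2\max_k(r_k-l_k)$ exceeds $\sum_{k\le N-1}(r_k-l_k)$, with $\xi_k=r_k-l_k$ in the balanced case and the maximal gap replaced by the sum of the others in the dominant case, and the same use of the strict compatibility inequality to get \cref{cond:xi_strictly_positive} and \cref{cond:xi_strange_ineq}. The only differences are cosmetic (relabelling the maximal index to $1$, and normalizing the algebra through $d_k$, $D$, $S$), plus your explicit remark that the degenerate configurations, including $N=2$, are excluded by the hypotheses.
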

\begin{proof}
We consider the following two cases.

\textit{Case 1.} Suppose that the inequality $2(r_k - l_k) \le (r_1 - l_1) + \dots + (r_{N - 1} - l_{N - 1})$ holds for all $k = 1, \dots, N- 1$. Then we put $\xi_k = r_k - l_k$. Checking \cref{cond:xi_bounds,cond:xi_strictly_positive,cond:xi_form_broken_line} is trivial. In addition,
\begin{align*}
C + \frac{\xi_1 + \dots + \xi_{N - 1}}{2} &> \left(\frac{l_1 + r_1}{2} + \dots + \frac{l_{N - 1} + r_{N - 1}}{2} + l_N\right) + \frac{\xi_1 + \dots + \xi_{N - 1}}{2} \\
&= r_1 + \dots + r_{N - 1} + l_N,
\end{align*}
and this implies \cref{cond:xi_strange_ineq}.

\textit{Case 2.} Suppose that there exists an index $m = 1, \dots, N - 1$ such that $2(r_m - l_m) > (r_1 - l_1) + \dots + (r_{N - 1} - l_{N - 1})$. Without loss of generality we may assume that $m = N - 1$. Then put $\xi_k = r_k - l_k$ for all $k = 1, \dots, N - 2$ and put $\xi_{N - 1} = (r_1 - l_1) + \dots + (r_{N - 2} - l_{N - 2})$.

First, by construction the inequality $0 \le \xi_k \le r_k - l_k$ is trivial for all $k \le N - 2$. The inequality $2(r_{N - 1} - l_{N - 1}) > (r_1 - l_1) + \dots + (r_{N - 1} - l_{N - 1})$ is equivalent to \[r_{N - 1} - l_{N - 1} > (r_1 - l_1) + \dots + (r_{N - 2} + r_{N - 2}) = \xi_{N - 1}.\]
Thus, $0 \le \xi_{N - 1} < r_{N - 1} - l_{N - 1}$, and this implies \cref{cond:xi_bounds}.

\Cref{cond:xi_strictly_positive} is trivial for all $k \le N - 2$. Since $(\vec{l}, \vec{r})$ is a $C$-compatible boundary, we have $r_{N - 1} - l_{N - 1} \le C - (l_1 + \dots + l_N)$, or equivalently $l_1 + \dots + l_{N - 2} + r_{N - 1} + l_N \le C$. In addition, since $C < r_1 + \dots + r_{N - 1} + l_N$, we conclude that
\[
r_1 + \dots + r_{N - 1} + l_{N} > l_1 + \dots + l_{N - 2} + r_{N - 1} + l_N,
\]
which is equivalent to $r_1 + \dots + r_{N - 2} > l_1 + \dots + l_{N - 2}$. Thus, \[
\xi_{N - 1} = (r_1 + \dots + r_{N - 2}) - (l_1 + \dots + l_{N - 2}) > 0,\]
and this implies \cref{cond:xi_strictly_positive}.

\Cref{cond:xi_form_broken_line} is equivalent to the inequality $2\max\{\xi_k \colon 1 \le k \le N - 1\} \le \xi_1 + \dots + \xi_{N - 1}$. Since $\xi_{N - 1} = \xi_1 + \dots + \xi_{N - 2}$ and all $\xi_k$ are nonnegative, we conclude that $\max\{\xi_k \colon 1 \le k \le N - 1\} = \xi_{N - 1}$. The equation $2\xi_{N - 1} = \xi_1 + \dots + \xi_{N - 1}$ follows trivially by construction.

We have $(\xi_1 + \dots + \xi_{N - 1}) / 2 = (r_1 - l_1) + \dots + (r_{N - 2} - l_{N - 2})$. Hence, the inequality \[
r_1 + \dots + r_{N - 1} + l_N < C + (\xi_1 + \dots + \xi_{N - 1}) / 2\]
is equivalent to $r_{N - 1} - l_{N - 1} < C - (l_1 + \dots + l_N)$, which follows from the lemma assumption. Thus, \cref{cond:xi_strange_ineq} is achieved.
\end{proof}

\begin{lemma} \label{lem:zeta_tuple_exists}
Let $(\vec{l}, \vec{r})$ be a $C$-compatible boundary. Suppose that the inequality $r_k - l_k < C - (l_1 + \dots +l_N)$ holds for all $k \le N - 1$ and that
\[
r_1 + \dots + r_{N - 1} + l_N > C > \frac{l_1 + r_1}{2} + \dots + \frac{l_{N - 1} + r_{N - 1}}{2} + l_N.
\]
Then there exists a real number $\eps > 0$ and a tuple of functions $\{\zeta_k(t)\}_{k = 1}^{N - 1}$ defined on the segment $[0, \eps]$ such that the following conditions hold:
\begin{conditions}
    \item \label{cond:zeta_nonincreasing}the function $\zeta_k(t)$ is nonincreasing for all $k \le N - 1$;
    \item \label{cond:zeta_inside_segment}the inequality $l_k \le \zeta_k(t) \le r_k$ holds for all $t \in [0, \eps]$ and for all $k \le N - 1$;
    \item \label{cond:zeta_less_rk} if $l_k < r_k$, then $\zeta_k(t) < r_k$ for all $t \in [0, \eps]$;
    \item \label{cond:zeta_measure_in_V_C}the $N$-tuple of probability measures \[(\lambda[\zeta_1(t), r_1], \dots, \lambda[\zeta_{N - 1}(t), r_{N - 1}], \lambda[l_N, l_N + t])\]
    is contained in $\mathcal{V}^N[C]$ for all $t \in [0, \eps]$.
\end{conditions}
\end{lemma}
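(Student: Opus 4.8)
The plan is to manufacture the functions $\zeta_k$ out of the auxiliary tuple $\{\xi_k\}_{k=1}^{N-1}$ furnished by \cref{lem:xi_tuple_exists}, whose hypotheses are exactly the ones assumed here. Set
\[
A = 2\bigl(r_1 + \dots + r_{N-1} + l_N - C\bigr), \qquad S = \xi_1 + \dots + \xi_{N-1}.
\]
The assumption $r_1 + \dots + r_{N-1} + l_N > C$ gives $A > 0$, while \cref{cond:xi_strange_ineq} is precisely the inequality $A < S$; in particular $S > 0$. I would then define, for $t \ge 0$,
\[
\zeta_k(t) = r_k - \frac{\xi_k}{S}\,(A + t), \qquad k = 1, \dots, N-1,
\]
and pick $\eps > 0$ small enough that $\frac{\xi_k}{S}(A+\eps) \le r_k - l_k$ for every $k \le N-1$. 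Such an $\eps$ exists: for an index $k$ with $\xi_k > 0$ we have $\frac{\xi_k}{S}A < \xi_k \le r_k - l_k$ by \cref{cond:xi_bounds} (using $A < S$), and the continuous map $t \mapsto \frac{\xi_k}{S}(A+t)$ keeps this strict inequality on a neighbourhood of $0$; for an index with $\xi_k = 0$ the quantity vanishes identically and, by \cref{cond:xi_strictly_positive}, $l_k = r_k$.

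With these choices, \cref{cond:zeta_nonincreasing}, \cref{cond:zeta_inside_segment} and \cref{cond:zeta_less_rk} can be read off directly: each $\zeta_k$ is affine with slope $-\xi_k/S \le 0$, hence nonincreasing; $\zeta_k(t) \le r_k$ since $\frac{\xi_k}{S}(A+t) \ge 0$, and $\zeta_k(t) \ge l_k$ on $[0,\eps]$ by the choice of $\eps$; and if $l_k < r_k$ then $\xi_k > 0$ by \cref{cond:xi_strictly_positive}, so $\frac{\xi_k}{S}(A+t) > 0$ and $\zeta_k(t) < r_k$.

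The heart of the argument is \cref{cond:zeta_measure_in_V_C}, which I would obtain by checking that the $N$-tuple $\bigl(\lambda[\zeta_1(t), r_1], \dots, \lambda[\zeta_{N-1}(t), r_{N-1}], \lambda[l_N, l_N+t]\bigr)$ satisfies the hypotheses of \cref{prop:lebesgue_tuple_in_V_C_ineq}. Abbreviating its widths by $w_k(t) = r_k - \zeta_k(t) = \frac{\xi_k}{S}(A+t)$ for $k \le N-1$ and $w_N(t) = t$, one has $w_1(t) + \dots + w_{N-1}(t) = A + t$ and $w_1(t) + \dots + w_N(t) = A + 2t$. The barycenter identity then reads
\begin{align*}
\sum_{k=1}^{N-1}\frac{\zeta_k(t)+r_k}{2} + \frac{l_N + (l_N+t)}{2}
&= \sum_{k=1}^{N-1} r_k - \frac{A+t}{2} + l_N + \frac{t}{2} \\
&= \sum_{k=1}^{N-1} r_k + l_N - \frac{A}{2} = C
\end{align*}
by the definition of $A$, and the width inequalities $2w_k(t) \le w_1(t) + \dots + w_N(t) = A + 2t$ hold because for $k = N$ this is $2t \le A + 2t$ (true since $A > 0$), while for $k \le N-1$ it follows from $2\xi_k \le S$ (\cref{cond:xi_form_broken_line}): $2w_k(t) = \frac{2\xi_k}{S}(A+t) \le A + t \le A + 2t$. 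Hence \cref{prop:lebesgue_tuple_in_V_C_ineq} applies and the tuple lies in $\mathcal{V}^N[C]$ for every $t \in [0,\eps]$.

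The only genuinely delicate point is the choice of $\eps$: the slack $r_k - l_k - \frac{\xi_k}{S}(A+t)$ must stay nonnegative on the entire interval $[0,\eps]$, which forces us to shrink $\eps$ down to the worst index, and the degenerate indices with $l_k = r_k$ (equivalently $\xi_k = 0$) must be treated separately — there the construction simply keeps $\zeta_k \equiv r_k = l_k$. Everything else is bookkeeping resting on the three inequalities $A > 0$, $A < S$ and $2\xi_k \le S$ that \cref{lem:xi_tuple_exists} was tailored to supply.
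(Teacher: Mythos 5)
Your proof is correct and follows essentially the same route as the paper: the same construction $\zeta_k(t) = r_k - \frac{\xi_k}{S}(A+t)$ built from the tuple of \cref{lem:xi_tuple_exists}, with the barycenter identity and the width inequalities checked so that \cref{prop:lebesgue_tuple_in_V_C_ineq} yields membership in $\mathcal{V}^N[C]$. The only (harmless) difference is that you settle the $k = N$ width inequality directly via $2t \le A + 2t$ (valid since $A > 0$), whereas the paper introduces a second threshold $\eps^{(2)}$ for that case; your shortcut, and your slightly different but equivalent choice of $\eps$, are both fine.
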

\begin{proof}
We are under the assumptions of \cref{lem:xi_tuple_exists}, and therefore there exists a tuple of real numbers $\{\xi_k^{(0)}\}_{k = 1}^{N - 1}$ that satisfies \cref{cond:xi_bounds,cond:xi_strictly_positive,cond:xi_form_broken_line,cond:xi_strange_ineq}.

By \cref{cond:xi_strange_ineq}, 
\begin{equation}\label{eq:sum_xi_greater_than_0}
0 < 2(r_1 + \dots + r_{N - 1} - C) < \xi_1^{(0)} + \dots + \xi_{N - 1}^{(0)},
\end{equation}
and therefore we conclude that $\xi^{(0)}_1 + \dots + \xi^{(0)}_{N - 1} > 0$. For each $k \le N - 1$ consider the function $\xi_k(t)$ defined on the set of nonnegative real numbers as follows:
\begin{equation}\label{eq:xi_t_definition}
\xi_k(t) = \xi_k^{(0)} \cdot \frac{2(r_1 + \dots + r_{N - 1} + l_N - C) + t}{\xi_1^{(0)} + \dots + \xi_{N - 1}^{(0)}}
\end{equation}
Put $\zeta_k(t) = r_k - \xi_k(t)$. We claim that the restrictions of these functions to the segment $[0, \eps]$ for some $\eps > 0$ satisfy all the conditions.

Since $\xi_k^{(0)} \ge 0$ and the function $2(r_1 + \dots + r_{N - 1} + l_N - C) + t$ is increasing, the function $\xi_k(t)$ is nondecreasing. Hence, the function $\zeta_k(t) = r_k - \xi_k(t)$ is nonincreasing, and this implies \cref{cond:zeta_nonincreasing}.

Denote $\eps^{(1)} = \xi_1^{(0)} + \dots + \xi_{N - 1}^{(0)} - 2(r_1 + \dots + r_{N - 1} - C)$. By inequality \cref{eq:sum_xi_greater_than_0} we have $\eps^{(1)} > 0$. In addition, it follows from equation \cref{eq:xi_t_definition} that if $0 \le t \le \eps^{(1)}$, then $0 \le \xi_k(t) \le \xi_k^{(0)}$. Since $\xi_k^{(0)} \le r_k - l_k$ by \cref{cond:xi_bounds}, we conclude that $0 \le \xi_k(t) \le r_k - l_k$, and therefore $l_k \le \zeta_k(t) \le r_k$ for all $t \in [0, \eps^{(1)}]$, and this implies \cref{cond:zeta_inside_segment}.

In addition, if $l_k < r_k$, then by \cref{cond:xi_strictly_positive} we have $\xi_k^{(0)} > 0$, and therefore $\xi_k(t) > 0$ for all $t \ge 0$. Thus, if $l_k < r_k$, then $\zeta_k(t) = r_k - \xi_k(t) < r_k$, and therefore \cref{cond:zeta_less_rk} is also achieved.

We claim that the $N$-tuple of measures $\vec{\mu}(t)$ defined in \cref{cond:zeta_measure_in_V_C} satisfies the conditions of \cref{prop:lebesgue_tuple_in_V_C_ineq}. It follows from equation \cref{eq:xi_t_definition} that
\begin{equation}\label{eq:xi_t_sum}
\xi_1(t) + \dots + \xi_{N - 1}(t) - t = 2(r_1 + \dots + r_{N - 1} + l_N - C),
\end{equation}
and therefore
\begin{align*}
    \frac{l_N + (l_N + t)}{2} + \sum_{k = 1}^{N - 1}\frac{\zeta_k(t) + r_k}{2} = (r_1 + \dots + r_{N - 1} + l_N) - \frac{\xi_1(t) + \dots + \xi_{N - 1}(t) - t}{2} = C.
\end{align*}

Denote $\xi_N(t) = t$. The support length of the $k$th item of $\vec{\mu}(t)$ is equal to $\xi_k(t)$ for all $k = 1, \dots, N$. By \cref{prop:lebesgue_tuple_in_V_C_ineq}, to prove that $\vec{\mu}(t) \in \mathcal{V}^N[C]$ it suffices to check that the inequality $\xi_k(t) \le \sum_{i \ne k}\xi_i(t)$ holds for all $k \le N$, or equivalently, \[2\max\{\xi_k(t) \colon 1 \le k \le N\} \le \xi_1(t) + \dots + \xi_N(t).\]

Let $\eps^{(2)} = \max\{\xi_k(0) \colon 1 \le k \le N - 1\}$. It follows from \cref{eq:xi_t_sum} that
\[
\xi_1(0) + \dots + \xi_{N - 1}(0) = 2(r_1 + \dots + r_{N - 1} + l_N - C) > 0,
\]
and therefore $\eps^{(2)} > 0$. Since the functions $\xi_k(t)$ are nondecreasing, we conclude that if $0 \le t \le \eps^{(2)}$, then $\xi_N(t) = t \le \max\{\xi_k(t) \colon 1 \le k \le N - 1\}$. Hence, if $0 \le t \le \eps^{(2)}$, then
\[
\max\{\xi_k(t) \colon 1 \le k \le N\} = \max\{\xi_k(t) \colon 1 \le k \le N - 1\}.
\]

The functions $\xi_1(t), \dots, \xi_{N - 1}(t)$ are proportional to $\xi_1^{(0)}, \dots, \xi_{N - 1}^{(0)}$ with the same positive coefficient, and therefore it follows from \cref{cond:xi_form_broken_line} that the inequality
\[
2\max\{\xi_k(t) \colon 1 \le k \le N - 1\} \le \xi_1(t) + \dots + \xi_{N - 1}(t) \le \xi_1(t) + \dots + \xi_N(t)
\]
holds for all $t \ge 0$. Thus, if $0 \le t \le \eps^{(2)}$, then $2\max\{\xi_k(t) \colon 1 \le k \le N\} \le \xi_1(t) + \dots + \xi_N(t)$, and therefore if $0 \le t \le \eps^{(2)}$, then by \cref{prop:lebesgue_tuple_in_V_C_ineq} the $N$-tuple of probability measures $\vec{\mu}(t)$ is contained in $\mathcal{V}^N[C]$. Hence, if we put $\eps = \min(\eps^{(1)}, \eps^{(2)})$, then the restrictions of the functions $\zeta_k(t)$ to the segment $[0, \eps]$ satisfy all \cref{cond:zeta_nonincreasing,cond:zeta_inside_segment,cond:zeta_less_rk,cond:zeta_measure_in_V_C}.
\end{proof}

\begin{proof}[Proof of \cref{prop:cut_from_the_right}]

By \cref{prop:C_le_mean_sum},
\[
\frac{l_1 + r_1}{2} + \dots + \frac{l_N + r_N}{2} \ge C,
\]
and combining this with inequality \cref{eq:C_strange_ineq} we conclude that $r_m > l_m$. By \cref{prop:sum_r_ge_C}, $r_1 + \dots + r_N \ge C + (r_m - l_m)$. If the equality is achieved, then $\vec{\mu} \notin \mathrm{se}\,\mathcal{V}^N[C]$ by \cref{prop:r_sum_eq_not_2ex}. Thus, the following inequality holds:
\begin{equation}\label{eq:sum_rm_ge_C}
r_1 + \dots + r_N - (r_m - l_m) > C > \frac{l_1 + r_1}{2} + \dots + \frac{l_N + r_N}{2} - \frac{r_m - l_m}{2}.
\end{equation}

Without loss of generality we may assume that $m = N$. Then the inequality $r_k- l_k < C - (l_1 + \dots + l_N)$ holds for all $k \le N - 1$, $l_N < r_N$ and inequality \cref{eq:sum_rm_ge_C} transforms into 
\[
r_1 + \dots + r_{N - 1} + l_N > C > \frac{l_1 + r_1}{2} + \dots + \frac{l_{N - 1} + r_{N - 1}}{2} + l_N.
\]
Thus, all the assumptions of \cref{lem:zeta_tuple_exists} hold, and therefore there exists a real number $\eps^{(1)} > 0$ and a tuple of functions $\{\zeta_k\}_{k = 1}^{N - 1}$ defined on the segment $[0, \eps^{(1)}]$ satisfying \cref{cond:zeta_nonincreasing,cond:zeta_inside_segment,cond:zeta_less_rk,cond:zeta_measure_in_V_C}.

We claim that there exists a real number $A > 0$ such that if $0 \le \alpha \le A$, then the inequality 
\begin{equation}\label{eq:lambda_le_mu_k}
\alpha\lambda[\zeta_k(t), r_k] \le \mu_k
\end{equation}
holds for all $t \in [0, \eps^{(1)}]$ and for all $k \le N - 1$. Since $\mu_k = \alpha_k\lambda[l_k, p_k] + (1 - \alpha_k)\lambda[l_k, r_k]$, the inequality \cref{eq:lambda_le_mu_k} follows from 
\begin{equation}\label{eq:lambda_le_part_mu_k}
\alpha\lambda[\zeta_k(t), r_k] \le (1 - \alpha_k)\lambda[l_k, r_k].
\end{equation}

By \cref{cond:zeta_inside_segment} we have $l_k \le \zeta_k(t) \le r_k$. If $l_k = r_k$, then $\lambda[l_k, r_k] = \lambda[\xi_k(t), r_k] = \delta(l_k)$, and inequality \cref{eq:lambda_le_part_mu_k} is equivalent to $\alpha \le (1 - \alpha_k)$. 

Suppose otherwise that $l_k < r_k$. Then by \cref{cond:zeta_less_rk} we have $l_k \le \xi_k(t) < r_k$. Hence, inequality \cref{eq:lambda_le_part_mu_k} holds if and only if the density function of the left hand-side is not greater than the density function of the right hand-side, or equivalently,
\[
\frac{\alpha}{r_k - \zeta_k(t)} \le \frac{1 - \alpha_k}{r_k - l_k}.
\]

Since the function $\zeta_k(t)$ is nonincreasing by \cref{cond:zeta_nonincreasing}, we have $r_k - \zeta_k(t) \ge r_k - \zeta_k(0) > 0$, and therefore the previous inequality follows from
\[
\alpha \le (1 - \alpha_k)\frac{r_k - \zeta_k(0)}{r_k - l_k}.
\]

Thus, put
\[
A = \min_{1 \le k \le {N - 1}}(1 - \alpha_k)\frac{r_k - \zeta_k(0)}{r_k - l_k},
\]
where $(r_k - \zeta_k(0)) / (r_k - l_k) = 1$  if $l_k = r_k$. Since $\alpha_k < 1$, we conclude that $A > 0$. In addition, if $0 \le \alpha \le A$, then the inequality $\alpha\lambda[\zeta_k(t), r_k] \le \mu_k$ holds for all $t \in [0, \eps^{(1)}]$ and for all $k \le N - 1$.

Denote \[
\vec{\mu}^{\,(1)}(t) =  \left(\lambda[\zeta_1(t), r_1], \dots, \lambda[\zeta_{N - 1}(t), r_{N - 1}], \lambda[l_N, l_N + t]\right).\] 
It follows from \cref{cond:zeta_measure_in_V_C} that $\vec{\mu}^{\,(1)}(t) \in \mathcal{V}^N[C]$ for all $t \in [0, \eps^{(1)}]$. We may assume that $A < 1$ and that $0 < \alpha \le A$. Denote \[
\vec{\mu}^{\,(2)}(t, \alpha) = (\vec{\mu} - \alpha\vec{\mu}^{\,(1)}(t)) / (1 - \alpha).\]
We have $\vec{\mu} = \alpha\vec{\mu}^{\,(1)}(t) + (1 - \alpha)\vec{\mu}^{\,(2)}(t, \alpha)$. Since $\vec{\mu} \in \mathcal{V}^N[C]$ and $\vec{\mu}^{\,(1)}(t) \in \mathcal{V}^N[C]$, we have \[\mathbb{E}(\mu_1) + \dots + \mathbb{E}(\mu_N) = \mathbb{E}(\mu_1^{(1)}(t)) + \dots + \mathbb{E}(\mu_N^{(1)}(t)) = C,\] 
and therefore it follows from the linearity of $\mathbb{E}$ that
\begin{equation}\label{eq:mu_2_expectation_sum}
\mathbb{E}(\mu_1^{(2)}(t, \alpha)) + \dots + \mathbb{E}(\mu_N^{(2)}(t, \alpha)) = C.
\end{equation}
In addition, since $\alpha\mu_k^{(1)}(t) = \alpha\lambda[\zeta_k(t), r_k] \le \mu_k$ and $\mu_k \in \mathcal{D}[l_k, r_k]$ for all $k \le N - 1$, we conclude that
\[
\mu_k^{(2)}(t, \alpha) = \frac{\mu_k - \alpha\lambda[\zeta_k(t), r_k]}{1 - \alpha} \in \mathcal{D}[l_k, r_k]
\]
for all $k \le N - 1$. See \cref{fig:cut_right_part_visualization} for visualization. Next, we consider two following cases.
\begin{figure}[!htb]
    \begin{minipage}{0.49\textwidth}
    \centering

    \begin{tikzpicture}[xscale=0.8,yscale=0.5]
    \coordinate[label=below:$l_k$] (LD) at (0, 0);
    \coordinate (LU) at (0, 5);
    \coordinate (PU) at (4, 5);
    \coordinate (PM) at (4, 3);
    \coordinate[label=below:$p_k$] (PD) at (4, 0);
    \coordinate (RM) at (8, 3);
    \coordinate[label=below:$r_k$] (RD) at (8, 0);

    \draw[black,fill=yellow, thin] (LD) -- (LU) -- (PU) -- (PM) -- (RM) -- (RD) -- cycle;
    \coordinate[label=below:$\zeta_k(t)$] (ZD) at (2.5, 0);
    \coordinate (ZU) at (2.5, 5);
    \coordinate (ZM) at (2.5, 1);
    \coordinate (RZ) at (8, 1);
    \draw[draw=none, fill=red, very thin] (ZD) -- (ZM) -- (RZ) -- (RD) -- cycle;

    \draw[dashed] (PM) -- (PD);
    \end{tikzpicture}

    \end{minipage}\hfill
        \begin{minipage}{0.49\textwidth}
    \centering
    \begin{tikzpicture}[xscale=0.8,yscale=0.5]
    \coordinate[label=below:$l_N$] (LD) at (0, 0);
    \coordinate (LU) at (0, 5);
    \coordinate (PU) at (4, 5);
    \coordinate (PM) at (4, 3);
    \coordinate[label=below:$p_N$] (PD) at (4, 0);
    \coordinate (RM) at (8, 3);
    \coordinate[label=below:$r_N$] (RD) at (8, 0);

    \draw[black,fill=yellow, thin] (LD) -- (LU) -- (PU) -- (PM) -- (RM) -- (RD) -- cycle;
    \coordinate[label=below right:$l_N + t$] (ZD) at (0.7, 0);
    \coordinate (ZU) at (0.7, 5);
    \draw[draw=none, fill=red, very thin] (LD) -- (LU) -- (ZU) -- (ZD) -- cycle;

    \draw[dashed] (PM) -- (PD);
    \end{tikzpicture}
    \end{minipage}
        \caption{Decomposition of the measure $\mu_k = \alpha_k\lambda[l_k, p_k] + (1 - \alpha_k)\lambda[l_k, r_k]$ into the convex combination $\alpha \mu_k^{(1)}(t) + (1 - \alpha) \mu_k^{(2)}(t, \alpha)$. The measure $\alpha\mu_k^{(1)}(t)$ is colored red, and the measure $(1 - \alpha)\mu_k^{(2)}(t, \alpha)$ is yellow. 
        The case $k \le N - 1$ is shown on the left part of the figure: here, $\mu_k^{(1)}(t) = \lambda[\zeta_k(t), r_k]$. The case $k = N$ is shown on the right part of the figure: in this case, $\mu_N^{(1)} = \lambda[l_N, l_N + t]$.
        }
    \label{fig:cut_right_part_visualization}
\end{figure}
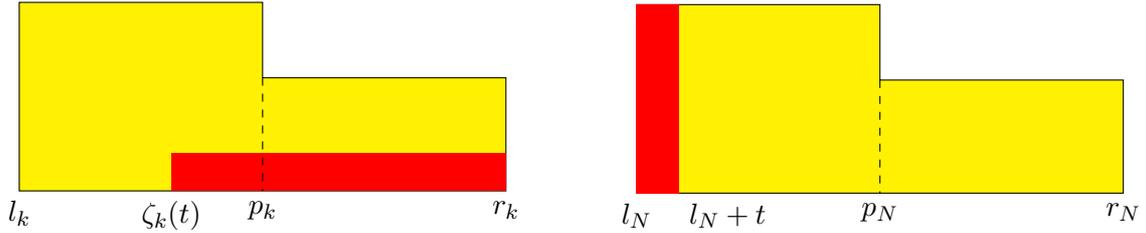

\textit{Case 1.} Suppose that $p_N = l_N$, and therefore $\mu_N = \alpha_N\delta(l_N) + (1 - \alpha_N)\lambda[l_N, r_N]$, where $0 < \alpha_N < 1$. Put $t_0 = 0$ and $\alpha_0 = \min(\alpha_N, A)$. Then $\mu_N^{(1)}(t_0) = \lambda[l, l] = \delta(l)$, and therefore
\[
\mu_N^{(2)}(t_0, \alpha_0) = \frac{(\alpha_N - \alpha_0)\delta(l) + (1 - \alpha_N)\lambda[l_N, r_N]}{1 - \alpha_0} \in \mathcal{D}[l_N, r_N].
\]

Since in addition $\mu_k^{(2)}(t, \alpha) \in \mathcal{D}[l_k, r_k]$ for all $k \le N - 1$, we conclude that $\vec{\mu}^{\,(2)}(t, \alpha) \in \mathcal{V}^N[C]$. Finally, since $\supp(\mu_N^{(1)}(t, \alpha)) = \{l_N\}$, $\supp(\mu_N) = [l_N, r_N]$ and $l_N < r_N$, we conclude that $\vec{\mu}^{\,(1)}(t, \alpha) \ne \vec{\mu}$. Thus, $\vec{\mu}$ is a nontrivial convex combination of $\vec{\mu}^{\,(1)}(t), \vec{\mu}^{\,(2)}(t, \alpha) \in \mathcal{V}^N[C]$, and therefore $\vec{\mu}$ is not a subextreme point of $\mathcal{V}^N[C]$ in this case.

\textit{Case 2.} Suppose that $l_N < p_N$. First, since $r_N - l_N > 0$ and $r_k - l_k < C - (l_1 + \dots + l_N)$ for all $k \le N - 1$, there exists a real number $\eps^{(2)} > 0$ such that the inequalities $r_N - l_N - t \ge 0$ and $r_k - l_k \le C - (l_1 + \dots + l_N + t)$ hold for all $k \le N -1$ and for all $t \le \eps^{(2)}$. Equivalently, if we denote $\vec{l}^{\,(2)}(t) = (l_1, \dots, l_{N - 1}, l_N + t)$, then the pair of $N$-tuples of point $(\vec{l}^{\,(2)}(t), \vec{r}\,)$ is a $C$-compatible boundary for all $t \in [0, \eps^{(2)}]$.

We have $\mu_N = \alpha_N\lambda[l_N, p_N] + (1 - \alpha_N)\lambda[l_N, r_N)$. One can easily verify that for all $t \in (0, p_N - l_N)$ there exists a real number $\alpha(t) \in (0, 1)$ such that $\mu_N$ can be represented in the form 
\begin{equation}\label{eq:alpha_t_definition}
\mu_N = \alpha(t) \lambda[l_N, l_N + t] + (1 - \alpha(t))\mu',
\end{equation}
where $\mu' \in \mathcal{D}[l_N + t, r_N]$; moreover, if $t \to 0$, then $\alpha(t) \to 0$. Hence, there exists a point $t_0$ such that $0 < t_0 < p_N - l_N$, $t_0 < \eps^{(1)}$, $t_0 < \eps^{(2)}$ and $\alpha_0 = \alpha(t_0) < A$.

Since $0 < t_0 < \eps^{(1)}$ and $\alpha_0 < A$, we conclude that $\mu_k^{(2)}(t_0, \alpha_0) \in \mathcal{D}[l_k, r_k]$ for all $k \le N - 1$. In addition, since $0 < t_0 < p_N - l_N$ and $\alpha_0 = \alpha(t_0)$, it follows from equation \cref{eq:alpha_t_definition} that $\mu_N^{(2)}(t_0, \alpha_0) \in \mathcal{D}[l_N + t_0, r_N]$. Finally, since $t_0 < \eps^{(2)}$, we conclude that $(\vec{l}^{\,(2)}(t_0), \vec{r}\,)$ is a $C$-compatible boundary, and combining this with equation \cref{eq:mu_2_expectation_sum} we get $\vec{\mu}^{\,(2)}(t_0, \alpha_0) \in \mathcal{V}^N[C]$.

Since $\supp(\mu^{(1)}_N(t_0)) = [l_N, l_N + t_0]$ and $\supp(\mu_N) = [l_N, r_N] \ne [l_N, l_N + t_0]$, we conclude that $\vec{\mu}^{\,(1)}(t_0) \ne \vec{\mu}$. Hence, $\vec{\mu}$ can be represented in the form
\[
\vec{\mu} = \alpha_0\vec{\mu}^{\,(1)}(t_0) + (1 - \alpha_0)\vec{\mu}^{\,(2)}(t_0, \alpha_0),
\]
where $0 < \alpha_0 < 1$, both $N$-tuples of probability measures $\vec{\mu}^{\,(1)}(t_0)$ and $\vec{\mu}^{\,(2)}(t_0, \alpha_0)$ are contained in $\mathcal{V}^N[C]$, and $\vec{\mu}^{\,(1)}(t_0) \ne \vec{\mu}$. Thus, $\vec{\mu}$ is not a subextreme point of $\mathcal{V}^N[C]$.
\end{proof}

\begin{theorem}\label{thm:2_ex_of_V_C}
An $N$-tuple of probability measures $\vec{\mu}$ is a subextreme point of $\mathcal{V}^N[C]$ if and only if $\mu_k = \delta(l_k)$ for all $k = 1, \dots, N$, where $l_1, \dots, l_N$ are some points on the real line such that $l_1 + \dots + l_N = C$.
\end{theorem}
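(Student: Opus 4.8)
The plan is to prove the two implications separately. The implication from right to left is immediate: if $\mu_k=\delta(l_k)$ for all $k$ and $l_1+\dots+l_N=C$, then $(\vec{l},\vec{l})$ is a $C$-compatible boundary and $\delta(l_k)\in\mathcal{D}[l_k,l_k]$, so $\vec{\mu}\in\mathcal{V}^N[C]$; and since every $\delta(l_k)$ is an extreme point of $\mathcal{P}(\R)$, the tuple $\vec{\mu}$ is an extreme point of the convex set $\mathcal{P}^N(\R)\supseteq\mathcal{V}^N[C]$, hence cannot be a nontrivial convex combination of two distinct elements of $\mathcal{V}^N[C]$, i.e.\ $\vec{\mu}\in\mathrm{se}\,\mathcal{V}^N[C]$.

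For the converse I argue by contradiction. Let $\vec{\mu}\in\mathrm{se}\,\mathcal{V}^N[C]$. By \cref{prop:2_ex_of_V_C_is_step_C_tuple} it is a step $C$-compatible tuple, with associated data $l_k\le p_k\le r_k$ and $0<\alpha_k<1$; suppose $S:=\{k:l_k<r_k\}\ne\varnothing$ and set $D:=C-(l_1+\dots+l_N)$ (so $D\ge r_k-l_k$ for all $k$ and $D>0$) and $E:=\{k:r_k-l_k=D\}$. By \cref{prop:pk_equal_rk} there is at most one ``exceptional'' index $k_0$ with $p_{k_0}<r_{k_0}$ (and then $k_0\in S$); for all other $k$ one has $\mu_k=\lambda[l_k,r_k]$, so $\tfrac12\sum_k(l_k+r_k)-C=\tfrac12\beta$, where $\beta:=\alpha_{k_0}(r_{k_0}-p_{k_0})>0$ if $k_0$ exists and $\beta:=0$ otherwise; using $l_1+\dots+l_N=C-D$ this gives $\sum_k r_k=C+D+\beta$.

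The core of the argument is a case analysis designed to reduce to the non-extremality criteria already established. If there is no exceptional index, then $\sum_k r_k=C+D$, so \cref{prop:r_sum_eq_not_2ex} (contrapositive) excludes any $m\in S$ with $r_m-l_m=D$; since $r_m-l_m=0<D$ for $m\notin S$ this forces $E=\varnothing$, and \cref{prop:cut_from_the_right} applies with any $m\in S$ (its hypotheses hold because $\tfrac12\sum_k(l_k+r_k)=C$ and $r_m>l_m$), contradicting subextremality. If an exceptional index $k_0$ exists and either $E=\varnothing$, or $E=\{k_1\}$ is a singleton, then \cref{prop:cut_from_the_right} applies with $m=k_0$, resp.\ $m=k_1$: in each case \cref{eq:C_strange_ineq} becomes $r_m-l_m>\beta$, which holds since $\beta=\alpha_{k_0}(r_{k_0}-p_{k_0})<r_{k_0}-p_{k_0}\le r_{k_0}-l_{k_0}\le D$ while $r_m-l_m$ equals $r_{k_0}-l_{k_0}$ or $D$. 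Again a contradiction.

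The only remaining case — an exceptional index with $|E|\ge2$ — is where I expect the real work, and it needs an explicit decomposition. First a counting step: from $\sum_k(r_k-l_k)=\sum_k r_k-\sum_k l_k=2D+\beta$, the bound $\sum_k(r_k-l_k)\ge|E|D$, and $\beta<D$, one gets $|E|=2$; then $\sum_{k\notin E}(r_k-l_k)=\beta$, which forces $k_0\in E$ (otherwise $r_{k_0}-l_{k_0}\le\beta=\alpha_{k_0}(r_{k_0}-p_{k_0})\le\alpha_{k_0}(r_{k_0}-l_{k_0})$ with $\alpha_{k_0}<1$ is impossible). So $E=\{k_0,v\}$ with $r_{k_0}-l_{k_0}=D$ and $\mu_v=\lambda[l_v,l_v+D]$. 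I then write $\vec{\mu}=(1-\alpha_{k_0})\vec{\mu}^{\,(1)}+\alpha_{k_0}\vec{\mu}^{\,(2)}$ with $\mu_{k_0}^{(1)}=\lambda[l_{k_0},r_{k_0}]$, $\mu_{k_0}^{(2)}=\lambda[l_{k_0},p_{k_0}]$, $\mu_v^{(2)}=\lambda[b,l_v+D]$ where $b:=l_v+(1-\alpha_{k_0})(r_{k_0}-p_{k_0})$, $\mu_v^{(1)}:=(1-\alpha_{k_0})^{-1}(\mu_v-\alpha_{k_0}\mu_v^{(2)})$ (a probability measure in $\mathcal{D}[l_v,l_v+D]$, because $b\le l_v+(1-\alpha_{k_0})D$ makes its two-step density nonincreasing and nonnegative), and $\mu_j^{(1)}=\mu_j^{(2)}=\mu_j$ for $j\ne k_0,v$. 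The value of $b$ is pinned down precisely so that the expectation $\mu_{k_0}^{(1)}$ gains over $\mu_{k_0}$ is cancelled by what $\mu_v^{(1)}$ loses over $\mu_v$, which keeps $\sum_k\mathbb{E}(\mu_k^{(1)})=\sum_k\mathbb{E}(\mu_k^{(2)})=C$. It then remains to check $C$-compatibility of both tuples: $\vec{\mu}^{\,(1)}$ preserves every left endpoint, so its bound is still $D$ and every width is $\le D$; in $\vec{\mu}^{\,(2)}$ only the $v$-th left endpoint moves, by $(1-\alpha_{k_0})(r_{k_0}-p_{k_0})$, so the bound becomes $D-(1-\alpha_{k_0})(r_{k_0}-p_{k_0})$, and each width is at most this — equality for $\mu_v^{(2)}$, a reduction to $1-\alpha_{k_0}\le1$ for $\mu_{k_0}^{(2)}$, and a reduction to $r_j-l_j\le\beta\le\alpha_{k_0}D$ for the other coordinates, the latter using $\sum_{k\notin E}(r_k-l_k)=\beta$ and $r_{k_0}-p_{k_0}\le D$. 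Since $\vec{\mu}^{\,(1)}\ne\vec{\mu}^{\,(2)}$, this contradicts $\vec{\mu}\in\mathrm{se}\,\mathcal{V}^N[C]$. Hence $S=\varnothing$, so $\mu_k=\delta(l_k)$ for every $k$, and $l_1+\dots+l_N=\sum_k\mathbb{E}(\mu_k)=C$. The bookkeeping in this last construction — fixing $b$ and verifying $C$-compatibility of $\vec{\mu}^{\,(2)}$ — is the part I expect to demand care.
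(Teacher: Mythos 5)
Your proof is correct; I checked the expectation bookkeeping and the compatibility estimates in your final decomposition and they hold (in particular $b\le l_v+(1-\alpha_{k_0})D$ reduces to $r_{k_0}-p_{k_0}\le D$, and the new bound $D-(1-\alpha_{k_0})(r_{k_0}-p_{k_0})$ dominates every width in $\vec{\mu}^{\,(2)}$ via $\beta+(1-\alpha_{k_0})(r_{k_0}-p_{k_0})=r_{k_0}-p_{k_0}\le D$). The overall skeleton coincides with the paper's: both arguments run by contradiction through \cref{prop:2_ex_of_V_C_is_step_C_tuple}, \cref{prop:pk_equal_rk}, \cref{prop:r_sum_eq_not_2ex} and \cref{prop:cut_from_the_right} (you read the typo in \cref{eq:C_strange_ineq} as intended, $\tfrac{l_N+r_N}{2}$), and both reduce to the configuration with two coordinates of maximal width $D$, one carrying the genuine step. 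The endgame differs, though. The paper sorts the widths, and in the hard case rewrites the exceptional measure as an \emph{adjacent} split $\mu_N=\beta\lambda[l_N,p_N]+(1-\beta)\lambda[p_N,r_N]$ while cutting the other maximal-width coordinate $\lambda[l_{N-1},r_{N-1}]$ into two adjacent uniform pieces of proportions $\beta$, $1-\beta$; both resulting tuples then consist entirely of uniform measures, so membership in $\mathcal{V}^N[C]$ follows directly from \cref{cor:lebesgue_tuple_in_V_C_eq}. You instead keep the nested representation with weight $\alpha_{k_0}$, carve the right-end uniform piece $\lambda[b,l_v+D]$ out of $\mu_v$, and accept a two-step-density remainder $\mu_v^{(1)}$, which obliges you to verify by hand both that this remainder lies in $\mathcal{D}[l_v,l_v+D]$ and that the shifted boundary is $C$-compatible; your organization of the preliminary cases via the set $E$ of maximal-width indices, with the counting argument $|E|=2$ and $k_0\in E$, replaces the paper's sorting and its small subcase computation forcing $\mu_k=\delta(l_k)$ for $k\le N-2$. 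In short, the paper's route buys a cleaner verification by reducing everything to tuples of uniform measures, while yours avoids the WLOG reordering and makes the balance of first moments (the choice of $b$) the explicit mechanism, at the price of the extra monotonicity and compatibility checks — both are complete.
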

\begin{proof}
Let $\vec{\mu}$ be a subextreme point of $\mathcal{V}^N[C]$. By \cref{prop:2_ex_of_V_C_is_step_C_tuple} we have $\vec{\mu}$ is a step $C$-compatible tuple, and therefore $\mu_k = \alpha_k\lambda[l_k, p_k] + (1 - \alpha_k)\lambda[l_k, r_k]$ for all $k = 1, \dots, N$, where $(\vec{l}, \vec{r})$ is a $C$-compatible boundary, $l_k \le p_k \le r_k$ and $\alpha_k \in (0, 1)$ for all $k$. We claim that $l_k = r_k$ for all $k$.

Assume the converse. Without loss of generality we may assume that \[
r_1 - l_1 \le r_2 - l_2 \le \dots \le r_N - r_N.\]
Since there exists an index $k$ such that $l_k < r_k$, we conclude that $l_N < r_N$.

It follows from \cref{prop:pk_equal_rk} that there exists an index $m = 1, \dots, N$ such that if $k \ne m$, then $p_k = r_k$. Hence, if $k \ne m$, then $\mu_k = \lambda[l_k, r_k]$, and therefore $\mathbb{E}(\mu_k) = (l_k + r_k) / 2$. In addition, $
\mathbb{E}(\mu_m) = (l_m + \alpha_mp_m + (1 - \alpha_m)r_m) / 2$, and therefore
\begin{equation}\label{eq:C_repr_with_pm_rm}
C = \mathbb{E}(\mu_1) + \dots + \mathbb{E}(\mu_N) = \frac{l_1 + r_1}{2} + \dots + \frac{l_N + r_N}{2} - \frac{\alpha_m(r_m - p_m)}{2}.
\end{equation}

Since $r_N - l_N > 0$ and $\alpha_m < 1$, we have 
\[
\frac{r_N - l_N}{2} > \frac{\alpha_m(r_N - l_N)}{2} \ge \frac{\alpha_m(r_m - l_m)}{2} \ge \frac{\alpha_m(r_m - p_m)}{2},
\]
and therefore it follows from equation \cref{eq:C_repr_with_pm_rm} that
\[
C > \frac{l_1 + r_1}{2} + \dots + \frac{l_N + r_N}{2} - \frac{r_N - l_N}{2}.
\]

If $r_{N - 1} - l_{N - 1} < C - (l_1 + \dots + l_N)$, then $r_k - l_k < C - (c_1 + \dots + l_N)$ for all $k \le N - 1$ and then by \cref{prop:cut_from_the_right} the $N$-tuple of probability measures $\vec{\mu}$ is not a subextreme point of $\mathcal{V}^N[C]$. Hence,
\begin{equation}\label{eq:rn_ln_eq_C_minus_sum_l}
r_{N - 1} - l_{N - 1} = r_N - l_N = C - (l_1 + \dots + l_N).
\end{equation}

Since $C = \mathbb{E}(\mu_1) + \dots + \mathbb{E}(\mu_N)$, we have
\begin{equation}\label{eq:rn_ln_eq_E_minus_sum_l}
r_{N - 1} - l_{N - 1} = r_N - l_N = (\mathbb{E}(\mu_1) - l_1) + \dots + (\mathbb{E}(\mu_N) - l_N),
\end{equation}
where all the summands in the right hand-side of this equation are nonnegative. Suppose that $p_{N - 1} = r_{N - 1}$ and $p_N = r_N$. Then $\mu_{N - 1} = \lambda[l_{N - 1}, r_{N - 1}]$, and therefore $\mathbb{E}(\mu_{N - 1}) - l_{N - 1} = (r_{N - 1} - l_{N - 1}) / 2$, and similarly $\mathbb{E}(\mu_N) - l_N = (r_N - l_N) / 2 = (r_{N - 1} - l_{N - 1}) / 2$. Substituting this into equation \cref{eq:rn_ln_eq_E_minus_sum_l} we get
\[
r_{N - 1} - l_{N - 1} = (\mathbb{E}(\mu_1) - l_1) + \dots + (\mathbb{E}(\mu_{N - 2}) - l_{N - 2}) + (r_{N - 1} - l_{N - 1}),
\]
and therefore we conclude that $l_k = \mathbb{E}(\mu_k)$ for all $k \le N - 2$.

Since $\mathbb{E}(\mu_k) = (l_k + \alpha_kp_k + (1 - \alpha_k)r_k) / 2$ and $0 < \alpha_k < 1$, we have $l_k = p_k = r_k$ for all $k \le N - 2$, or equivalently $\mu_k = \delta(l_k)$ for all $k \le N - 2$. Hence,
\[
r_{N - 1} - l_{N - 1} = C - (l_1 + \dots + l_N) = C - (r_1 + \dots + r_{N - 2}) - l_{N - 1} - l_{N},
\]
or equivalently $r_1 + \dots + r_N = C + (r_N - l_N)$. Then, since in addition $l_N < r_N$, it follows from \cref{prop:r_sum_eq_not_2ex} that $\vec{\mu}$ is not a subextreme point of $\mathcal{V}^N[C]$ in the case of $p_{N - 1} = r_{N - 1}$ and $p_N = r_N$.

Thus, either $p_{N - 1} < r_{N - 1}$ or $p_N < r_N$. Without loss of generality we may assume that $p_N < r_N$. Then it follows from \cref{prop:pk_equal_rk} that $\mu_k = \lambda[l_k, r_k]$ for all $k \le N - 1$. Since $\mu_N = \alpha_N\lambda[l_N, p_N] + (1 - \alpha_N)\lambda[l_N, r_N]$ there exists a real number $\beta$ such that \[
\mu_N = \beta\lambda[l_N, p_N] + (1 - \beta)\lambda[p_N, r_N].\]
Since in addition $\alpha_N \in (0, 1)$ and $p_N < r_N$, we conclude that $0 < \beta < 1$. Consider the following $N$-tuples of probability measures $\vec{\mu}^{\,(1)}$ and $\vec{\mu}^{\,(2)}$:
\begin{align*}
    \mu_k^{(1)} &= \begin{cases}
    \lambda[l_k, r_k] &\text{if $k \le N - 2$},\\
    \lambda[r_{N - 1} - \beta(r_{N - 1} - l_{N - 1}), r_{N - 1}] &\text{if $k = N - 1$},\\
    \lambda[l_N, p_N] &\text{if $k = N$};
    \end{cases}\\
    \mu_k^{(2)} &= \begin{cases}
    \lambda[l_k, r_k] &\text{if $k \le N - 2$},\\
    \lambda[l_{N - 1}, r_{N - 1} - \beta(r_{N - 1} - l_{N - 1})] &\text{if $k = N - 1$},\\
    \lambda[p_N, r_N] &\text{if $k = N$}.
    \end{cases}
\end{align*}

One can easily verify that $\beta\vec{\mu}^{\,(1)} + (1 - \beta)\vec{\mu}^{\,(2)} = \vec{\mu}$. In addition, since $\mu_N^{(1)} = \lambda[l_N, p_N] \ne \mu_N$, we have $\vec{\mu} \ne \vec{\mu}^{\,(1)}$. We claim that both $\vec{\mu}^{\,(1)}$ and $\vec{\mu}^{\,(2)}$ satisfy all the assumptions of \cref{cor:lebesgue_tuple_in_V_C_eq}, and therefore they are both contained in $\mathcal{V}^N[C]$.

First, let us verify that $\vec{\mu}^{\,(1)} \in \mathcal{V}^N[C]$. We have $\mathbb{E}(\mu_k) = (l_k + r_k) / 2$ for all $k \le N - 1$ and \[
\mathbb{E}(\mu_N) = \beta \frac{l_N + p_N}{2} + (1 - \beta)\frac{p_N + r_N}{2} = \frac{(1 - \beta)(r_N - l_N)}{2} + \frac{l_N + p_N}{2};
\]
summarizing this equations we get
\[
C = \mathbb{E}(\mu_1) + \dots + \mathbb{E}(\mu_N) = \frac{l_1 + r_1}{2} + \dots + \frac{l_{N - 1} + r_{N - 1}}{2} + \frac{(1 - \beta)(r_N - l_N)}{2} + \frac{l_N + p_N}{2}.
\]
Substituting this into equation \cref{eq:rn_ln_eq_C_minus_sum_l} we get
\[
r_{N - 1} - l_{N - 1} = r_N - l_N = \frac{r_1 - l_1}{2} + \dots  + \frac{r_N - l_N}{2} + \frac{p_N - l_N}{2} - \frac{\beta(r_N - l_N)}{2}.
\]
In particular,
\begin{equation}\label{eq:final_mu1_satisfy_cor}
\beta(r_{N - 1} - l_{N - 1}) = \beta(r_N - l_N) = (r_1 - l_1) + \dots + (r_{N - 2} - l _{N - 2}) + (p_N - l_N).
\end{equation}

Hence, we only need to verify that $\mathbb{E}(\mu_1^{(1)}) + \dots + \mathbb{E}(\mu_N^{(1)}) = C$. We have $\mathbb{E}(\mu_k^{(1)}) = (l_k + r_k) / 2$ for all $k \le N - 2$ and $\mathbb{E}(\mu_N) = (l_N + p_N) / 2$. In addition, 
\[
\mathbb{E}(\mu_{N - 1}^{(1)}) = \frac{2r_{N - 1} - \beta(r_{N - 1} - l_{N - 1})}{2} =  \frac{l_{N - 1} + r_{N - 1}}{2} + \frac{(1 - \beta)(r_{N - 1} - l_{N - 1})}{2}
\]
and using the equation $r_{N - 1} - l_{N - 1} = r_N - l_N$ we get
\begin{equation}\label{eq:sum_mu1_expectations}
\mathbb{E}(\mu_1^{(1)}) + \dots + \mathbb{E}(\mu_N^{(1)}) = \frac{l_1 + r_1}{2} + \dots + \frac{l_{N - 1} + r_{N - 1}}{2} + \frac{(1 - \beta)(r_N - l_N)}{2} + \frac{l_N + p_N}{2}= C.
\end{equation}
Thus, it follows from equations \cref{eq:final_mu1_satisfy_cor,eq:sum_mu1_expectations} that the $N$-tuple of probability measures $\vec{\mu}^{\,(1)}$ satisfies all the assumptions of \cref{cor:lebesgue_tuple_in_V_C_eq}, and therefore $\vec{\mu}^{\,(1)} \in \mathcal{V}^N[C]$.

Let us verify in the same manner that $\vec{\mu}^{\,(2)} \in \mathcal{V}^N[C]$. Since $\mathbb{E}(\mu_1) + \dots + \mathbb{E}(\mu_N) = C$ and $\mathbb{E}(\mu_1^{(1)}) + \dots + \mathbb{E}(\mu_N^{(1)}) = C$, and since $\vec{\mu} = \beta\vec{\mu}^{\,(1)} + (1 - \beta)\vec{\mu}^{\,(2)}$, it follows from the linearity of the function $\mathbb{E}$ that
\[
\mathbb{E}(\mu_1^{(2)}) + \dots + \mathbb{E}(\mu_N^{(2)}) = C.
\]
Adding $(1 - \beta)(r_{N - 1} - l_{N - 1}) - (p_N - l_N)$ to the both sides of equation \cref{eq:final_mu1_satisfy_cor} and using the equation $r_{N - 1} - l_{N -1} = r_N - l_N$ we also get
\begin{align*}
(r_1 - l_1) + \dots &+ (r_{N - 2} - l_{N - 2}) + (1 -\beta)(r_{N - 1} - l_{N - 1}) \\&= (r_{N - 1} - l_{N - 1}) - (p_N - l_N)= (r_N - l_N) - (p_N - l_N) = r_N - p_N.
\end{align*}
Hence, the $N$-tuple of probability measures $\vec{\mu}^{\,(2)}$ also satisfies all the assumptions of \cref{cor:lebesgue_tuple_in_V_C_eq}, and therefore $\vec{\mu}^{\,(2)} \in \mathcal{V}^N[C]$. Thus, $\vec{\mu}$ is not a subextreme point of $\mathcal{V}^N[C]$.

This contradiction proves that if $\vec{\mu}$ is a subextreme point of $\mathcal{V}^N[C]$, then $\mu_k = \delta(l_k)$ for all $k = 1, \dots, N$. Then $\mathbb{E}(\mu_k) = l_k$, and we also have
\[
C = \mathbb{E}(\mu_1) + \dots + \mathbb{E}(\mu_N) = l_1 + \dots + l_N.
\]

Finally, let us verify that if $\vec{\mu} = (\delta(l_1), \dots, \delta(l_N))$, where $l_1 + \dots + l_N = C$, then $\vec{\mu}$ is a subextreme point of $\mathcal{V}^N[C]$. We have $ \mathbb{E}(\mu_1) + \dots + \mathbb{E}(\mu_N) = C$ and $\mu_k \in \mathcal{D}[l_k, l_k]$ for all $k = 1, \dots, N$. The pair $(\vec{l}, \vec{l})$ is a $C$-compatible boundary, and therefore $\vec{\mu} \in \mathcal{V}^N[C]$. We also have $\mu_k$ is an extreme point of $\mathcal{P}(\R)$, and therefore $\vec{\mu}$ is an extreme point of $\mathcal{P}^N(\R)$. Thus, since $\mathcal{V}^N[C] \subset \mathcal{P}^N(\R)$, the tuple $\vec{\mu}$ is a subextreme point of $\mathcal{V}^N[C]$.
\end{proof}
\section{The inclusion of \texorpdfstring{$V^N[C]$}{VN[C]} into the set of flat \texorpdfstring{$N$}{}-tuples of measures}\label{sec:final}
\subsection{Extreme points of the set of flat \texorpdfstring{$N$}{N}-tuples of probability measures}

\begin{definition}
Let $K$ be a closed segment on the real line, and let $C$ be a real number. We denote by $\mathcal{F}^N(K; C)$ the set of $N$-tuples of probability measures $(\mu_1, \dots, \mu_N) \in \mathcal{P}(K)^N$ such that there exists a transport plan $\gamma \in \mathcal{P}(K^N)$ concentrated on the hyperplane $\{x_1 + \dots + x_N = C\}$ such that $\prj{k}{\gamma} = \mu_k$ for all $k$.
\end{definition}
\begin{proposition} \label{prop:F_KC_is_compact_convex}
The set $\mathcal{F}^N(K; C)$ is a compact convex subset of $\mathcal{M}^N(K)$.
\end{proposition}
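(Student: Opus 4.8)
The plan is to exhibit $\mathcal{F}^N(K;C)$ as the continuous image of a compact set of transport plans, and to read convexity off directly from the definition.

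First I would record the relevant compactness. Since $K$ is a compact segment, $K^N$ is a compact metric space, so $\mathcal{P}(K^N)$ is a compact convex subset of $\mathcal{M}(K^N)$, as recalled before \cref{cor:support_of_converging_measures}. The set
\[
H = \{(x_1,\dots,x_N) \in K^N : x_1 + \dots + x_N = C\}
\]
is a closed subset of $K^N$; if $H = \varnothing$, then $\mathcal{F}^N(K;C) = \varnothing$ is trivially compact and convex, so we may assume $H \ne \varnothing$. By \cref{cor:support_of_converging_measures} the set $\mathcal{P}(H)$ of probability measures on $K^N$ concentrated on $H$ is a closed, hence compact, subset of $\mathcal{P}(K^N)$.

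Next I would check that the marginal map
\[
\Pi \colon \mathcal{P}(K^N) \to \mathcal{P}(K)^N, \qquad \Pi(\gamma) = \bigl(\prj{1}{\gamma}, \dots, \prj{N}{\gamma}\bigr),
\]
is continuous. For each $k$ and each $\varphi \in C(K)$ the function $(x_1,\dots,x_N) \mapsto \varphi(x_k)$ lies in $C(K^N)$, so $\gamma \mapsto \int \varphi\,d(\prj{k}{\gamma}) = \int \varphi(x_k)\,\gamma(dx)$ is continuous in the weak topology; hence each coordinate of $\Pi$ is weakly continuous, and with the product topology on $\mathcal{P}(K)^N$ the map $\Pi$ is continuous. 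By the definition of $\mathcal{F}^N(K;C)$ we have $\mathcal{F}^N(K;C) = \Pi(\mathcal{P}(H))$, so $\mathcal{F}^N(K;C)$ is the continuous image of the compact set $\mathcal{P}(H)$ and is therefore compact.

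For convexity I would take $\vec{\mu}, \vec{\nu} \in \mathcal{F}^N(K;C)$ witnessed by $\gamma, \gamma' \in \mathcal{P}(H)$ and $\alpha \in [0,1]$; then $\alpha\gamma + (1-\alpha)\gamma' \in \mathcal{P}(H)$, and by linearity of the pushforward $\Pi(\alpha\gamma + (1-\alpha)\gamma') = \alpha\vec{\mu} + (1-\alpha)\vec{\nu}$, so this tuple belongs to $\mathcal{F}^N(K;C)$. There is no substantial obstacle here: the only point that uses the machinery of \cref{sec:D_lr_description} is the closedness of $\mathcal{P}(H)$ inside $\mathcal{P}(K^N)$, which is \cref{cor:support_of_converging_measures}; everything else is formal. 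Alternatively, one could argue sequentially, using metrizability of $\mathcal{P}(K^N)$: given $\vec{\mu}^{(n)} \to \vec{\mu}$ in $\mathcal{P}(K)^N$ with witnesses $\gamma^{(n)}$, pass to a weakly convergent subsequence $\gamma^{(n_j)} \to \gamma$, note that $\gamma$ is concentrated on $H$ by \cref{cor:support_of_converging_measures}, and observe that $\prj{k}{\gamma} = \lim_j \prj{k}{\gamma^{(n_j)}} = \mu_k$ for all $k$.
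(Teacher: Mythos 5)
Your proof is correct and follows essentially the same route as the paper: both realize $\mathcal{F}^N(K;C)$ as the image of the compact convex set of plans supported on $S = K^N \cap \{x_1 + \dots + x_N = C\}$ under the continuous linear marginal map, so compactness and convexity follow at once. Your verification of the continuity of $\Pi$ and the treatment of the empty case are just more explicit than the paper's, which asserts these points without detail.
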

\begin{proof}
Consider the set \[
S = K^N \cap \{x_1 + \dots + x_N = C\}.
\]
By definition, the set $\mathcal{F}^N(K; C)$ is the image of $\mathcal{P}(S) \subset \mathcal{M}(K^N)$ under the linear mapping $\mu \mapsto (\prj{1}{\mu}, \dots, \prj{N}{\mu})$. Since $S$ is compact, the set $\mathcal{P}(S)$ is a compact convex subset of $\mathcal{M}(K^N)$, and therefore, since the mapping is continuous, the image of $\mathcal{P}(S)$ is a compact convex subset of $\mathcal{M}^N(K)$. 
\end{proof}

Since $\mathcal{F}^N(K; C)$ is a compact convex subset of the locally convex vector space $\mathcal{M}^N(K)$, by the Krein-Milman theorem this set is fully described by its extreme points

\begin{proposition}\label{prop:extreme_points_of_F}
An $N$-tuple of probability measures $\vec{\mu}$ is an extreme point of the set $\mathcal{F}^N(K; C)$ if and only if there exists an $N$-tuple of points $\{t_k\}_{k = 1}^N \subset K$ such that $t_1 + \dots + t_N = C$ and $\mu_k = \delta(t_k)$ for all $k = 1, \dots, N$.
\end{proposition}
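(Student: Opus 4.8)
The plan is to handle the two implications separately; the ``if'' direction is immediate, and the ``only if'' direction reduces to a one-line splitting of the transporting measure.

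First I would treat sufficiency: if $\mu_k = \delta(t_k)$ with $t_k \in K$ and $t_1 + \dots + t_N = C$, then $\gamma := \delta(t_1, \dots, t_N) \in \mathcal{P}(K^N)$ is concentrated on $\{x_1 + \dots + x_N = C\}$ with $\prj{k}{\gamma} = \mu_k$ for all $k$, so $\vec{\mu} \in \mathcal{F}^N(K; C)$. Since each $\delta(t_k)$ is an extreme point of $\mathcal{P}(K)$, the tuple $\vec{\mu}$ is an extreme point of $\mathcal{P}(K)^N$, and because $\mathcal{F}^N(K; C) \subset \mathcal{P}(K)^N$ it is a fortiori an extreme point of $\mathcal{F}^N(K; C)$. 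This is the same product-of-extreme-points observation used at the end of the proof of \cref{thm:2_ex_of_V_C}.

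For necessity I would argue by contradiction. Let $\vec{\mu}$ be an extreme point of $\mathcal{F}^N(K; C)$ and pick $\gamma \in \mathcal{P}(S)$ with $\prj{k}{\gamma} = \mu_k$ for all $k$, where $S = K^N \cap \{x_1 + \dots + x_N = C\}$. Suppose some $\mu_j$ is not a Dirac measure; then there is a Borel set $A \subset K$ with $p := \mu_j(A) \in (0, 1)$ (for instance an open set separating two points of $\supp(\mu_j)$), so that $\gamma(E) = p$ for $E := \{x \in K^N : x_j \in A\}$. Setting $\gamma^{(1)} = p^{-1}\,\gamma|_E$ and $\gamma^{(2)} = (1 - p)^{-1}\,\gamma|_{K^N \setminus E}$, both are probability measures that still give full mass to $S$, hence lie in $\mathcal{P}(S)$, and $\gamma = p\gamma^{(1)} + (1 - p)\gamma^{(2)}$. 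Projecting, I obtain $\vec{\mu} = p\vec{\mu}^{(1)} + (1 - p)\vec{\mu}^{(2)}$ with $\vec{\mu}^{(1)}, \vec{\mu}^{(2)} \in \mathcal{F}^N(K; C)$, and $\mu_j^{(1)}(A) = 1 \ne 0 = \mu_j^{(2)}(A)$ forces $\vec{\mu}^{(1)} \ne \vec{\mu}^{(2)}$, contradicting extremality of $\vec{\mu}$. Hence every $\mu_k = \delta(t_k)$ for some $t_k \in K$; then $\gamma(\{x : x_k \ne t_k\}) \le \mu_k(\R \setminus \{t_k\}) = 0$ for each $k$, so $\gamma = \delta(t_1, \dots, t_N)$, and since $\supp(\gamma) \subset \{x_1 + \dots + x_N = C\}$ this yields $t_1 + \dots + t_N = C$.

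I do not expect any genuine obstacle: the whole content is the splitting of $\gamma$ according to the value of one coordinate. The only steps deserving an explicit word are that the normalized restrictions $\gamma^{(1)}, \gamma^{(2)}$ remain concentrated on $S$ (a normalized restriction of a measure supported on $S$ is supported on $S$) and that their $j$-th marginals are distinct, which is exactly where the choice $0 < \mu_j(A) < 1$ — available precisely because $\mu_j$ is not a Dirac measure — is used.
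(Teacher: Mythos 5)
Your proposal is correct and follows essentially the same route as the paper: the same splitting of the plan $\gamma$ according to whether the $j$-th coordinate lies in a set $A$ with $0<\mu_j(A)<1$, the same normalization-and-projection argument to contradict extremality, the same identification $\gamma=\delta(t_1,\dots,t_N)$ forcing $t_1+\dots+t_N=C$, and the same product-of-Dirac-extreme-points observation for sufficiency. No gaps.
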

\begin{proof}
Let $\vec{\mu}$ be an extreme point of $\mathcal{F}^N(K; C)$. Suppose that there exists an item $\mu_m$ which is not a Dirac measure. Then there exists a measurable set $A \subset K$ such that $0 < \mu_m(A) < 1$. Denote $\alpha = \mu_m(A)$. Since $\vec{\mu} \in \mathcal{F}^N(K; C)$, there exists a transport plan $\gamma \in \mathcal{P}(K^N)$ concentrated on the hyperplane $\{x_1 + \dots + x_N = C\}$ such that $\prj{k}{\gamma} = \mu_k$ for all $k$.

Let $\gamma^{(1)}$ be the restriction of $\gamma$ to the set $\R^{m - 1} \times A \times \R^{N - m}$, and let $\gamma^{(2)}$ be the restriction of $\gamma$ to the set $\R^{m - 1} \times (\R \backslash A) \times \R^{N - m}$. We trivially have $\gamma = \gamma^{(1)} + \gamma^{(2)}$. Hence, both measures $\gamma^{(1)}$ and $\gamma^{(2)}$ are concentrated on the hyperplane $\{x_1 + \dots + x_N = C\}$. We have $\prj{m}{\gamma^{(1)}} = \left.\mu_m\right\rvert_A$, and therefore $\norm{\gamma^{(1)}} = \norm{\left.\mu_m\right\rvert_A} = \alpha > 0$. Similarly, $\norm{\gamma^{(2)}} = 1 - \alpha$, and therefore both measures $\gamma^{(1)}$ and $\gamma^{(2)}$ are nonzero.

Denote \[
\vec{\mu}^{\,(1)} = \left(\prj{1}{\gamma^{(1)}}, \dots, \prj{N}{\gamma^{(1)}}\right) /  \alpha \text{ and } \vec{\mu}^{\,(2)} = \left(\prj{1}{\gamma^{(2)}}, \dots, \prj{N}{\gamma^{(2)}}\right) /  (1 - \alpha).\]
 Both $N$-tuples of measures $\vec{\mu}^{\,(1)}$ and $\vec{\mu}^{\,(2)}$ are contained in the space $\mathcal{F}^N(K; C)$, and $\vec{\mu} = \alpha \vec{\mu}^{\,(1)} + (1 - \alpha)\vec{\mu}^{\,(2)}$. In addition, since $\alpha\mu_m^{(1)} = \left.\mu_m\right\rvert_A$ and $(1 - \alpha)\mu_m^{(2)} = \left.\mu_m\right\rvert_{\R \backslash A}$, we conclude that $\mu_m^{(1)} \ne \mu_m^{(2)}$, and therefore $\vec{\mu}^{\,(1)} \ne \vec{\mu}^{\,(2)}$. This contradicts the extremality of $\vec{\mu}$.

This contradiction proves that if $\vec{\mu}$ is an extreme point of $\mathcal{F}^N(K;C)$, then all items of $\vec{\mu}$ are Dirac measures; hence, there exists an $N$-tuple of points $\{t_k\}_{k = 1}^N \subset K$ such that $\mu_k = \delta(t_k)$ for all $k$. Let us verify that if $(\delta(t_1), \dots, \delta(t_N))$ is contained in $\mathcal{F}^N(K; C)$, then $t_1 + \dots + t_N = C$. Indeed, let $\gamma$ be a transport plan concentrated on the hyperplane $\{x_1 + \dots + x_N = C\}$ such that $\prj{k}{\gamma} = \delta(t_k)$ for all $k$. Since $\supp(\prj{k}{\gamma}) = \{t_k\}$, we conclude that $\supp(\gamma) \subset R^{k - 1} \times \{t_k\} \times \R^{N - k}$. Intersecting that sets for all $k$, we obtain $\supp(\gamma) = \{(t_1, \dots, t_N)\}$; hence, $\gamma = \delta(t_1, \dots, t_N)$ and therefore $t_1 + \dots + t_N = C$.

Finally, let us verify that if $(t_1, \dots, t_N)$ is an $N$-tuple of points such that $t_k \in K$ for all $k$ and $t_1 + \dots + t_N = C$, then an $N$-tuple of measures $(\delta(t_1), \dots, \delta(t_N))$ is an extreme point of $\mathcal{F}^N(K; C)$. The transport plan $\gamma = \delta(t_1, \dots, t_N) \in \mathcal{P}(K^N)$ is concentrated on the hyperplane $\{x_1 + \dots + x_N = C\}$ and $\prj{k}{\gamma} = \delta(t_k)$ for all $k$; hence, $(\delta(t_1), \dots, \delta(t_N)) \in \mathcal{F}^N(K; C)$. The measure $\delta(t_k)$ is an extreme point of $\mathcal{P}(K)$ for all $k = 1, \dots, N$; hence, $(\delta(t_1), \dots, \delta(t_N))$ is an extreme point of $\mathcal{P}^N(K)$, and therefore, since $\mathcal{F}^N(K; C) \subset \mathcal{P}^N(K)$, we conclude that $(\delta(t_1), \dots, \delta(t_N))$ is also an extreme point of $\mathcal{F}^N(K; C)$.
\end{proof}

\subsection{Proof of the main theorem} The set $\mathcal{V}^N[C]$ is noncompact. In what follows, we consider the intersection of this set with the compact set $\mathcal{P}^N(K)$ and describe the closed convex hull of the intersection.
\begin{definition}
Given a closed segment $K$ and a real number $C$. Denote \[\mathcal{V}^N(K; C) = \mathcal{V}^N[C] \cap \mathcal{P}^N(K).\]
\end{definition}

\begin{proposition}\label{prop:U_KC_is_closed}
The set $\mathcal{V}^N(K; C)$ is a compact subset of $\mathcal{P}^N(K)$. 
\end{proposition}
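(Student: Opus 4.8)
The plan is to prove that $\mathcal{V}^N(K; C)$ is closed in $\mathcal{P}^N(K)$; since $\mathcal{P}^N(K)$ is compact (a finite product of the compact sets $\mathcal{P}(K)$), compactness of $\mathcal{V}^N(K; C)$ follows at once. As $\mathcal{P}^N(K)$ is metrizable, being a finite product of metrizable spaces, it suffices to work with sequences. So I would take a sequence $\{\vec{\mu}^{\,(n)}\}_{n = 1}^\infty \subset \mathcal{V}^N(K; C)$ converging in $\mathcal{P}^N(K)$ to some $\vec{\mu}$, and show $\vec{\mu} \in \mathcal{V}^N[C]$.

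For each $n$, the definition of $\mathcal{V}^N[C]$ supplies a $C$-compatible boundary $(\vec{l}^{\,(n)}, \vec{r}^{\,(n)})$ with $\mu_k^{(n)} \in \mathcal{D}[l_k^{(n)}, r_k^{(n)}]$ for all $k$ and with $\mathbb{E}(\mu_1^{(n)}) + \dots + \mathbb{E}(\mu_N^{(n)}) = C$; in particular $[l_k^{(n)}, r_k^{(n)}] \subset K$ for every $k$ and $n$. Convergence in the product topology means that $\mu_k^{(n)} \to \mu_k$ weakly for each fixed $k$, so \cref{prop:strong_closure_of_D} applies to each coordinate separately: for every $k$ the sequence $\{l_k^{(n)}\}_n$ converges to some point $l_k$, and $\mu_k \in \mathcal{D}[l_k, r_k]$, where $r_k := \liminf_{n \to \infty} r_k^{(n)}$.

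It remains to verify that $(\vec{l}, \vec{r})$ is a $C$-compatible boundary and that the first moments still add up to $C$. The latter is immediate from the continuity of $\mathbb{E}$ on $\mathcal{P}(K)$, since $x \mapsto x$ is bounded and continuous on the compact segment $K$. For the former, I would pass to the limit in the inequalities $0 \le r_k^{(n)} - l_k^{(n)} \le C - (l_1^{(n)} + \dots + l_N^{(n)})$: the right-hand side converges to $C - (l_1 + \dots + l_N)$ because every $l_j^{(n)}$ converges, while $\liminf_{n}(r_k^{(n)} - l_k^{(n)}) = \liminf_{n} r_k^{(n)} - \lim_{n} l_k^{(n)} = r_k - l_k$, so taking $\liminf$ of both sides gives $0 \le r_k - l_k \le C - (l_1 + \dots + l_N)$ for all $k$. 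Hence $\vec{\mu} \in \mathcal{V}^N[C]$, and since also $\vec{\mu} \in \mathcal{P}^N(K)$ we get $\vec{\mu} \in \mathcal{V}^N(K; C)$, which proves closedness and therefore compactness. The whole argument is bookkeeping on top of \cref{prop:strong_closure_of_D}; the only slightly delicate point is the interchange of $\liminf$ with the difference $r_k^{(n)} - l_k^{(n)}$, which is legitimate precisely because $\{l_k^{(n)}\}_n$ converges --- and that convergence is exactly what \cref{prop:strong_closure_of_D} guarantees, so no real obstacle arises.
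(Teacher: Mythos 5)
There is one genuine gap, and it is precisely the point that the paper's own proof takes care of. After choosing, for each $n$, a $C$-compatible boundary $(\vec{l}^{\,(n)}, \vec{r}^{\,(n)})$ with $\mu_k^{(n)} \in \mathcal{D}[l_k^{(n)}, r_k^{(n)}]$, you assert ``in particular $[l_k^{(n)}, r_k^{(n)}] \subset K$''. That does not follow from the definitions: membership of $\vec{\mu}^{\,(n)}$ in $\mathcal{P}^N(K)$ only forces $\supp(\mu_k^{(n)}) \subset K$, while the definition of $\mathcal{V}^N[C]$ merely asserts the existence of \emph{some} $C$-compatible boundary, whose right endpoints may lie outside $K$, since $\mathcal{D}[l, r] \subset \mathcal{D}[l, r']$ whenever $r' \ge r$. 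For example, with $K = [0, 1]$, $N = 3$, $\mu_1 = \mu_2 = \mu_3 = \lambda[1/2, 1]$ and $C = 9/4$, the boundary $l_k = 1/2$, $r_1 = 5/4$, $r_2 = r_3 = 1$ is $C$-compatible and admissible in the definition (the density of $\mu_1$, equal to $2$ on $[1/2,1]$ and $0$ beyond, is nonincreasing on $[1/2, 5/4]$), yet $r_1 \notin K$. Since \cref{prop:strong_closure_of_D} is stated under the hypothesis $[l^{(n)}, r^{(n)}] \subset K$, your invocation of it is not justified as written; note also that the finiteness of $r_k = \liminf_n r_k^{(n)}$ implicitly rests on this containment.

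The repair is short, and can be done in either of two ways. The paper's route: replace $r_k^{(n)}$ by the right endpoint $p_k^{(n)}$ of $\supp(\mu_k^{(n)})$, so that $\mu_k^{(n)} \in \mathcal{D}[l_k^{(n)}, p_k^{(n)}]$ and $[l_k^{(n)}, p_k^{(n)}] = \supp(\mu_k^{(n)}) \subset K$; then apply \cref{prop:strong_closure_of_D} to these segments and run your limiting argument using $p_k^{(n)} \le r_k^{(n)} \le C - \bigl(l_1^{(n)} + \dots + l_N^{(n)}\bigr) + l_k^{(n)}$, which produces the $C$-compatible boundary $(\vec{l}, \vec{p}\,)$ in the limit. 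Alternatively, observe that $l_k^{(n)} \in \supp(\mu_k^{(n)}) \subset K$ and $r_k^{(n)} \le l_k^{(n)} + C - \bigl(l_1^{(n)} + \dots + l_N^{(n)}\bigr)$, so all segments $[l_k^{(n)}, r_k^{(n)}]$ lie in a fixed larger closed segment $K'$, and apply \cref{prop:strong_closure_of_D} with $K'$ in place of $K$ (the proposition allows an arbitrary closed segment). With either repair, the rest of your argument --- the continuity of $\mathbb{E}$ giving $\mathbb{E}(\mu_1) + \dots + \mathbb{E}(\mu_N) = C$, and passing to the $\liminf$ in the compatibility inequalities, which is legitimate because the $l_k^{(n)}$ converge --- is correct and coincides with the paper's proof.
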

\begin{proof}
Since $\mathcal{P}^N(K)$ is compact, it is enough to prove that $\mathcal{V}^N(K; C)$ is closed. Let $\{\vec{\mu}^{\,(n)}\}_{n = 1}^{\infty} \subset \mathcal{V}^N(K; C)$ be a sequence of $N$-tuples of measures converging to $\vec{\mu}$. We claim that $\vec{\mu} \in \mathcal{V}^N(K; C)$. 

The sequence $\{\mu_k^{(n)}\}_{n = 1}^{\infty}$ converges weakly to $\mu_k$ for each $k = 1, \dots, N$. Hence, \[\lim_{n \to \infty}\mathbb{E}\left(\mu_k^{(n)}\right) = \mathbb{E}(\mu_k) \text{ for each $k = 1, \dots, N$},\]
and therefore $\mathbb{E}(\mu_1) + \dots + \mathbb{E}(\mu_N) = C$. By the definition of $\mathcal{V}^N(K; C)$, for each $n$ there exists a $C$-compatible boundary $(\vec{l}^{\,(n)}, \vec{r}^{\,(n)})$ such that $\mu_k^{(n)} \in \mathcal{D}[l_k^{(n)}, r_k^{(n)}]$ for each $k$.

Since $\mu_k^{(n)} \in \mathcal{D}[l_k^{(n)}, r_k^{(n)}]$, there exists a point $p_k^{(n)}$ such that $l_k^{(n)} \le p_k^{(n)} \le r_k^{(n)}$, $\mu_k^{(n)} \in \mathcal{D}[l_k^{(n)}, p_k^{(n)}]$ and $\supp(\mu_k^{(n)}) = [l_k^{(n)}, p^{(n)}_k] \subset K$. Hence, it follows from \cref{prop:strong_closure_of_D} that $\mu_k \in \mathcal{D}[l_k, p_k]$ for each $k$, where $l_k = \lim_{n \to \infty}l_k^{(n)}$ and $p_k = \liminf_{n \to \infty}p_k^{(n)}$.

Let $\vec{l} = (l_1, \dots, l_N)$, and let $\vec{r} = (r_1, \dots, r_N)$. Let us verify that $(\vec{l}, \vec{r})$ is a $C$-compatible boundary. Since $(\vec{l}^{\,(n)}, \vec{r}^{\,(n)})$ is a $C$-compatible boundary and $p_k^{(n)} \le r_k^{(n)}$ for all $k$ and for all $n$, we have  
\[p_k^{(n)} \le r_k^{(n)} \le C - \left(l_1^{(n)} + \dots + l_N^{(n)}\right) + l_k^{(n)}.\]
Hence,
\[
p_k = \liminf_{n \to \infty}p_k^{(n)} \le \lim_{n \to \infty}\left(C - \left(l_1^{(n)} + \dots + l_N^{(n)}\right) + l_k^{(n)}\right) = C - (l_1 + \dots + l_N) + l_k,
\]
and therefore $p_k - l_k \le C - (l_1 + \dots + l_N)$ for all $k$. In addition, since $p_k^{(n)} \ge  l_k^{(n)}$, we conclude that
\[
p_k = \liminf_{n \to \infty}p_k^{(n)} \ge \lim_{n \to \infty}l_k^{(n)} = l_k.
\]

Hence, the pair $(\vec{l}, \vec{p})$ is a $C$-compatible boundary, and therefore $\vec{\mu} \in \mathcal{V}^N(C)$. Since in addition the set $\mathcal{P}^N(K)$ is closed and $\vec{\mu}^{\,(n)} \in \mathcal{P}^N(K)$ for all $n$, we conclude that $\vec{\mu}$ is also contained in $\mathcal{P}^N(K)$. In particular, $\vec{\mu} \in \mathcal{V}^N[C] \cap \mathcal{P}^N(K) = \mathcal{V}^N(K; C)$. Thus, the set $\mathcal{V}^N(K; C)$ is closed.
\end{proof}

Next, we prove that for every $N$-tuple $\vec{\mu} \in \mathcal{V}^N(K; C)$ there exists a transport plan $\gamma$ concentrated on the hyperplane $\{x_1 + \dots + x_N = C\}$ such that $\prj{k}{\gamma_k} = \mu_k$ for all $k$. More precisely, we prove the following theorem.
\begin{theorem}\label{thm:F_eq_co_V_KC}
The set $\overline{\mathrm{co}}\,\mathcal{V}^N(K; C)$ coincides with $\mathcal{F}^N(K; C)$.
\end{theorem}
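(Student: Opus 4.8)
The plan is to prove the two inclusions $\mathcal{F}^N(K; C) \subseteq \overline{\mathrm{co}}\,\mathcal{V}^N(K; C)$ and $\overline{\mathrm{co}}\,\mathcal{V}^N(K; C) \subseteq \mathcal{F}^N(K; C)$ separately, in each case reducing to the already-established description of the extreme points of $\mathcal{F}^N(K; C)$ (\cref{prop:extreme_points_of_F}) and the subextreme points of $\mathcal{V}^N[C]$ (\cref{thm:2_ex_of_V_C}), glued together by the Krein--Milman theorem and its converse.

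For the first inclusion I would argue as follows. By \cref{prop:extreme_points_of_F}, every extreme point of $\mathcal{F}^N(K; C)$ is of the form $(\delta(t_1), \dots, \delta(t_N))$ with $t_k \in K$ and $t_1 + \dots + t_N = C$. Such a tuple lies in $\mathcal{V}^N(K; C)$: the pair $(\vec t, \vec t)$ is a $C$-compatible boundary, $\delta(t_k) \in \mathcal{D}[t_k, t_k]$, the first moments sum to $C$, and all marginals are supported on $K$. Hence $\mathrm{ex}\,\mathcal{F}^N(K; C) \subseteq \mathcal{V}^N(K; C)$, and since $\mathcal{F}^N(K; C)$ is compact and convex (\cref{prop:F_KC_is_compact_convex}), the Krein--Milman theorem gives $\mathcal{F}^N(K; C) = \overline{\mathrm{co}}\,\mathrm{ex}\,\mathcal{F}^N(K; C) \subseteq \overline{\mathrm{co}}\,\mathcal{V}^N(K; C)$.

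The reverse inclusion is the crux. Set $L = \overline{\mathrm{co}}\,\mathcal{V}^N(K; C)$; since $\mathcal{V}^N(K; C) \subseteq \mathcal{P}^N(K)$ and $\mathcal{P}^N(K)$ is compact and convex, $L$ is a closed subset of $\mathcal{P}^N(K)$, hence compact and convex. Because $\mathcal{V}^N(K; C)$ is compact (\cref{prop:U_KC_is_closed}), the converse Krein--Milman theorem (Milman's theorem) applies and yields $\mathrm{ex}\,L \subseteq \mathcal{V}^N(K; C)$. I claim further that each $\vec\mu \in \mathrm{ex}\,L$ is a subextreme point of $\mathcal{V}^N[C]$: if $\vec\mu = \alpha\vec\mu^{\,(1)} + (1 - \alpha)\vec\mu^{\,(2)}$ with $\vec\mu^{\,(1)}, \vec\mu^{\,(2)} \in \mathcal{V}^N[C]$ distinct and $\alpha \in (0,1)$, then from $\mu_k = \alpha\mu_k^{(1)} + (1-\alpha)\mu_k^{(2)}$, nonnegativity, and $\supp(\mu_k) \subseteq K$ one gets $\mu_k^{(j)}(\R \setminus K) \le \mu_k(\R \setminus K)/\min(\alpha, 1-\alpha) = 0$, so $\vec\mu^{\,(1)}, \vec\mu^{\,(2)} \in \mathcal{P}^N(K) \cap \mathcal{V}^N[C] = \mathcal{V}^N(K; C) \subseteq L$, contradicting extremality of $\vec\mu$ in $L$. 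Hence $\mathrm{ex}\,L \subseteq \mathrm{se}\,\mathcal{V}^N[C]$.

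Now \cref{thm:2_ex_of_V_C} forces every $\vec\mu \in \mathrm{ex}\,L$ to equal $(\delta(l_1), \dots, \delta(l_N))$ with $l_1 + \dots + l_N = C$, and since $\vec\mu \in \mathcal{P}^N(K)$ we have $l_k \in K$, whence \cref{prop:extreme_points_of_F} gives $\vec\mu \in \mathcal{F}^N(K; C)$. Thus $\mathrm{ex}\,L \subseteq \mathcal{F}^N(K; C)$, and, $L$ being compact and convex, Krein--Milman yields $L = \overline{\mathrm{co}}\,\mathrm{ex}\,L \subseteq \mathcal{F}^N(K; C)$ because $\mathcal{F}^N(K; C)$ is already closed and convex. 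Together with the first step this gives $\overline{\mathrm{co}}\,\mathcal{V}^N(K; C) = \mathcal{F}^N(K; C)$. The one genuinely delicate point is the application of the converse Krein--Milman theorem: one must check that its hypotheses hold, namely that $L$ is the closed convex hull of a \emph{compact} set inside a locally convex Hausdorff space, which is precisely why restricting to the compact slice $\mathcal{P}^N(K)$ is essential; the rest is bookkeeping with extreme points and supports.
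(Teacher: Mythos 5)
Your proof is correct and follows essentially the same route as the paper: Krein--Milman plus the Milman converse (using compactness of $\mathcal{V}^N(K;C)$) to locate the extreme points of $\overline{\mathrm{co}}\,\mathcal{V}^N(K;C)$ inside $\mathcal{V}^N(K;C)$, the support argument to upgrade them to subextreme points of $\mathcal{V}^N[C]$, and then \cref{thm:2_ex_of_V_C} and \cref{prop:extreme_points_of_F} to identify both sets as the closed convex hull of the Dirac tuples $(\delta(l_1),\dots,\delta(l_N))$ with $l_1+\dots+l_N=C$, $l_k\in K$. The only cosmetic difference is that you organize it as two inclusions while the paper derives the equality by showing both sets equal $\overline{\mathrm{co}}\,\mathrm{se}\,\mathcal{V}^N(K;C)$; the substance is identical.
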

\begin{proof}
The set $\overline{\mathrm{co}}\,\mathcal{V}^N(K; C)$ is a compact convex subset of $
\mathcal{M}^N(K)$, and therefore by the Krein-Milman theorem the set $\overline{\mathrm{co}}\,\mathcal{V}^N(K; C)$ coincides with the closed convex hull of its extreme points. Take into account the following theorem:
\begin{theorem*}[Milman converse, {{\cite[Proposition 1.5]{Phelps2001}}}]\label{thm:Milman_converse}
Suppose that $X$ is a compact convex subset of a locally convex space, that $Z \subset X$, and that $X$ is the
closed convex hull of $Z$. Then the extreme points of X are contained in the closure of $Z$.
\end{theorem*}

\noindent By \cref{prop:U_KC_is_closed} the set $\mathcal{V}^N(K; C)$ is a closed subset of $\overline{\mathrm{co}}\,\mathcal{V}^N(K; C)$. Thus, by the Milman converse the set $\mathcal{V}^N(K; C)$ contains all extreme points of $\overline{\mathrm{co}}\,\mathcal{V}^N(K; C)$.

If $\vec{\mu}$ is an extreme point of $\overline{\mathrm{co}}\,\mathcal{V}^N(K; C)$ and $\vec{\mu} \in \mathcal{V}^N(K; C)$, then one can trivially verify that $\vec{\mu}$ is a subextreme point of $\mathcal{V}^N(K; C)$. So, let us find all subextreme points of $\mathcal{V}^N(K; C)$. 

We have $\mathcal{V}^N(K; C) \subset \mathcal{V}^N[C]$. Hence, if $\vec{\mu} \in \mathcal{V}^N(K; C)$ and $\vec{\mu}$ is a subextreme point of $\mathcal{V}^N[C]$, then $\vec{\mu}$ is also a subextreme point of $\mathcal{V}^N(K; C)$. On the other hand, let $\vec{\mu}$ be a subextreme point of $\mathcal{V}(K; C)$. Suppose that $\vec{\mu} = \alpha \vec{\mu}^{\,(1)} + (1 - \alpha)\vec{\mu}^{\,(2)}$ for some $\alpha \in (0, 1)$ and for some distinct $N$-tuples of measures $\vec{\mu}^{\,(1)}, \vec{\mu}^{\,(2)} \in \mathcal{V}^N[C]$. Since $\supp(\mu_k) \subset K$ and $\mu_k = \alpha \mu_k^{(1)} + (1 - \alpha)\mu_k^{(2)}$, we conclude that $\supp(\mu_k^{(1)}) \subset K$ and $\supp(\mu_k^{(2)}) \subset K$ for all $k$. Hence, both $N$-tuples $\vec{\mu}^{\,(1)}$ and $\vec{\mu}^{\,(2)}$ are contained in $\mathcal{V}^N(K; C)$, and this contradicts the fact that $\mu$ is a subextreme point of $\mathcal{V}^N(K; C)$.

Thus, $\vec{\mu}$ is a subextreme point of $\mathcal{V}^N(K; C)$ if and only if $\vec{\mu} \in \mathcal{V}^N(K; C)$ and $\vec{\mu}$ is a subextreme point of $\mathcal{V}^N[C]$. In particular, using \cref{thm:2_ex_of_V_C} we get
\[
\mathrm{se}\,\mathcal{V}^N(K; C) = \{(\delta(l_1), \dots, \delta(l_N)) \colon l_1 + \dots + l_N = C\text{ and } \{l_1, \dots, l_N\} \subset K\}.
\]

Since the set $\mathrm{se}\,\mathcal{V}^N(K; C)$ contains all extreme points of $\overline{\mathrm{co}}\,\mathcal{V}^N(K; C)$, we get $\overline{\mathrm{co}}\,\mathcal{V}^N(K; C) = \overline{\mathrm{co}}\,\mathrm{se}\,\mathcal{V}^N(K; C)$. In addition, it follows from \cref{prop:extreme_points_of_F} that $\mathrm{ex}\,\mathcal{F}(K; C) = \mathrm{se}\,\mathcal{V}^N(K; C)$, and therefore, since $\mathcal{F}^N(K; C)$ is a compact convex set, we get \[
\mathcal{F}^N(K; C) = \overline{\mathrm{co}}\,\mathrm{ex}\,\mathcal{F}(K; C) = \overline{\mathrm{co}}\,\mathrm{se}\,\mathcal{V}^N(K; C) = \overline{\mathrm{co}}\,\mathcal{V}^N(K; C).\]
\end{proof}

Now we can prove \cref{thm:main_theorem}. Let us recall its formulation.
\begin{theorem*}
Let $\{\mu_k\}_{k = 1}^N$ be absolutely continuous probability measures on the real line. Suppose that $\supp(\mu_k) = [l_k, r_k]$ and the density function of $\mu_k$ is nonincreasing on the segment $[l_k, r_k]$ for all $k = 1, \dots, N$. Then the $N$-tuple $\{\mu_k\}_{ = 1}^n$ is flat if and only if we have $r_k - l_k \le \mathbb{E}(\mu_1) + \dots + \mathbb{E}(\mu_N) - (l_1 + \dots + l_N)$ for all $k = 1, \dots, N$.
\end{theorem*}\begin{proof}
First, let us prove the necessity. Let $\gamma$ be a transport plan concentrated on the hyperplane $\{x_1 + \dots + x_N = C\}$ such that $\prj{k}{\gamma} = \mu_k$ for all $k = 1, \dots, N$. We have
\begin{align*}
C = \int_{\R^N}(x_1 + \dots + x_N)\,\gamma(dx_1, \dots, dx_N) &= \int_{l_1}^{r_1}x_1\,\mu_1(dx_1) + \dots + \int_{l_N}^{r_N}x_N\,\mu_N(dx_N) \\
&= \mathbb{E}(\mu_1) + \dots + \mathbb{E}(\mu_N),
\end{align*}
and therefore the constant $C$ is uniquely defined.

Consider the set \[S = \left([l_1, r_1] \times \dots \times [l_N, r_N]\right) \cap \{x_1 + \dots + x_N = C\}.\]
Since $\supp(\prj{k}{\gamma}) = [l_k, r_k]$ for each $k$, we conclude that $\supp(\gamma) \subset [l_1, r_1] \times \dots \times [l_N, r_N]$. Since in addition $\supp(\gamma) \subset \{x_1 + \dots + x_N = C\}$, we get $\supp(
\gamma) \subset S$.

If $(x_1, \dots, x_k) \in S$, then for each $k$ the following inequality holds:
\[
x_k = C - \sum_{i \ne k}x_i \le C - \sum_{i \ne k}l_i = C + l_k - (l_1 + \dots + l_N).
\]
Hence, we get $[l_k, r_k] = \supp(\prj{k}{\gamma}) \subset (-\infty, C + l_k - (l_1 + \dots + l_N)]$, and therefore 
\[
r_k - l_k \le C - (l_1 + \dots + l_N) = \mathbb{E}(\mu_1) + \dots + \mathbb{E}(\mu_N) - (l_1 + \dots + l_N)
\]
for each $k = 1, \dots, N$.

Let us prove the sufficiency. Denote $C = \mathbb{E}(\mu_1) + \dots + \mathbb{E}(\mu_N)$. It follows from the assumptions that the pair $(\vec{l}, \vec{r})$ is a $C$-compatible boundary. In addition, $\mu_k \in \mathcal{D}_{AC}[l_k, r_k] \subset \mathcal{D}[l_k, r_k]$ for each $k$. Thus, the $N$-tuple of measures $\vec{\mu} = (\mu_1, \dots, \mu_N)$ is contained in $\mathcal{V}^N[C]$.

There exists a (large enough) closed segment $K$ such that $[l_k, r_k] \subset K$ for each $k$. Then $\mu_k \in \mathcal{P}(K)$ for each $k$, and therefore $\vec{\mu} \in \mathcal{V}^N(K; C)$. It follows from \cref{thm:F_eq_co_V_KC} that $\mathcal{V}^N(K; C) \subset \mathcal{F}^N(K; C)$, and so by the definition of $\mathcal{F}(K; C)$ there exists a transport plan $\gamma$ concentrated on the hyperplane $\{x_1 + \dots + x_N = C\}$ such that $\prj{k}{\gamma} = \mu_k$ for all $k$. Thus, $\vec{\mu}$ is a flat $N$-tuple of measures.
\end{proof}
\printbibliography

\end{document}